\theoremstyle{definition}
\colorlet{bscolor}{blue}
\colorlet{skcolor}{red}
\newtheorem{theorem}{Theorem}
\newtheorem{lemma}[theorem]{Lemma}
\newtheorem{definition}[theorem]{Definition}
\newcommand{\bscomment}[1]{\textcolor{bscolor}{BS:#1}}
\newcommand{\Omit}[1]{}
\newcommand{\cf}{CFON}
\newcommand{\chicf}{\chi_{ON}(G)}
\newcommand{\chion}{\chi_{ON}(G)}
\newcommand{\ucn}{U}
\newtcolorbox{mybox}[2][]{colbacktitle=white,colback=white,coltitle=black,title={#2},fonttitle=\bfseries,#1, left = 2mm, right = 2mm, breakable}
\begin{document}

\title{A Tight Bound for Conflict-free Coloring 
%CFON Chromatic Number 
in terms of Distance to Cluster}
\author{
Sriram Bhyravarapu}
%\inst{1}
\author{Subrahmanyam Kalyanasundaram}
%\inst{1}

\affil{Department of Computer Science and Engineering, IIT Hyderabad \\
{\tt \{cs16resch11001,subruk\}@iith.ac.in}}
%\today
%}

\maketitle
\begin{abstract}
Given an undirected graph $G = (V,E)$, a conflict-free coloring with respect to open neighborhoods (CFON coloring)
is a vertex coloring 
such that every vertex has a 
uniquely colored vertex in its open neighborhood. The minimum number of colors required for such
a coloring is the CFON chromatic number of $G$, denoted by $\chicf$.

In previous work [WG 2020], we showed the upper bound $\chicf \leq {\sf dc}(G) + 3$, where ${\sf dc}(G)$ denotes the 
distance to cluster parameter of $G$. 
In this
paper, we obtain the improved upper bound of $\chicf \leq {\sf dc}(G) + 1$. We also exhibit a family of graphs 
for which $\chicf > {\sf dc}(G)$, thereby demonstrating that our upper bound is tight. 
\end{abstract}
\section{Introduction}
Given a graph $G=(V,E)$, a \emph{conflict-free coloring} is an assignment of colors to every vertex of $G$ such that there exists a uniquely colored vertex in the 
open neighborhood 
%(open neighborhood) 
of each vertex. 
%The minimum number of colors required for such a coloring is called the conflict-free chromatic number denoted by $\chion{}$. 
This problem was motivated by the frequency assignment problem in cellular networks~\cite{Even2002}, where base stations and clients communicate with each other. 
It is required that there exists a base station with a unique frequency in the neighborhood of each client. 
We formally define the problem as follows.

\begin{definition}[Conflict-Free Coloring]\label{def:cf}
%Let $G= (V, E)$ be a graph. 
A \emph{\cf{} coloring} of a graph $G = (V,E)$ using $k$ colors is an 
assignment $C:V(G) \rightarrow \{1, 2, \ldots, k\}$ such that for every $v \in V(G)$, 
there exists an $i \in \{ 1, 2, \ldots, k\}$ such that  $|N(v) \cap C^{-1}(i)| = 1$.
The smallest number of colors required for a \cf{} coloring
of $G$ is called the \emph{\cf{} chromatic number of $G$}, denoted by $\chicf$.
\end{definition}

This problem has been studied from both algorithmic and structural perspectives~\cite{smorosurvey, Pach2009, planar, gargano2015, Boe2019, wgarxiv}. Combinatorial bounds on this problem have been studied with respect to vertex cover, treewidth, pathwidth, feedback vertex set and  neighborhood diversity \cite{gargano2015, Boe2019, wgarxiv}. 
In this paper, we study the relation between \cf{} chromatic number and the distance to cluster
parameter, which is formally defined as follows. 
\begin{definition}[Distance to Cluster]
Let $G= (V,E)$ be a graph. The distance to cluster of $G$, 
denoted ${\sf dc}(G)$, is the size of a smallest set $X\subseteq V$  
such that $G[V\setminus X]$ is a disjoint union of cliques. 
\end{definition}
Reddy \cite{vinod2017} showed that $2 {\sf dc}(G) +1$ colors are sufficient to \cf{} color a graph $G$.
%, where ${\sf dc}(G)$ is the size of the distance to cluster of $G$. 
This bound has been improved to ${\sf dc}(G)+3$ in \cite{wgarxiv}. In this paper, we 
further improve the bound to ${\sf dc}(G)+1$.  
%We formally state the results below.
\begin{theorem}\label{thm:upperbound}
%Let $G=(V,E)$ be a graph and $X\subseteq V$ be a set of vertices 
%such that $|X| = d$ and $G[V\setminus X]$ is a disjoint union of cliques. 
For any graph $G$, we have $\chi_{ON}(G) \leq \max\{3, {\sf dc}(G) + 1\}$. 
\end{theorem}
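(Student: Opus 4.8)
The plan is to fix a distance-to-cluster set $X=\{x_1,\dots,x_d\}$ of size $d={\sf dc}(G)$, so that $G-X$ is a disjoint union of cliques $Q_1,\dots,Q_m$ (I assume $G$ has no isolated vertex, as such a vertex admits no CFON coloring at all). First I would dispose of the small cases: when $d\le 1$ the target bound is $\max\{3,d+1\}=3$, and I would argue directly that three colors always suffice; in particular a single triangle already forces three colors, which is what makes the $3$ in the maximum necessary and where it ultimately comes from. For the main regime $d\ge 2$ the goal is a CFON coloring using the $d+1$ colors $\{1,\dots,d+1\}$.

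The core idea, and the source of the improvement from $+3$ to $+1$, is to assign the $d$ vertices of $X$ \emph{pairwise distinct} colors, say $C(x_i)=i$, and to hold back the single extra color $d+1$ as a reserved ``clearing'' color. I would then, in each clique $Q_j$, designate one special vertex $w_j$ and set $C(w_j)=d+1$. This single move simultaneously satisfies every non-special vertex $u\in Q_j\setminus\{w_j\}$: since $u$ is adjacent to all of $Q_j\setminus\{u\}$ and to some subset of $X$, and since color $d+1$ occurs in $N(u)$ exactly once (only at $w_j$, because $X$ avoids $d+1$ and no other vertex of $Q_j$ uses it), the vertex $u$ has a uniquely colored neighbor. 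The distinctness of the colors on $X$ is exactly what guarantees $w_j$ is the unique $(d+1)$-neighbor, and more generally that each clique vertex sees every one of its $X$-neighbors' colors precisely once from within $X$.

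What remains is to color the non-special clique vertices, using colors from $\{1,\dots,d\}$, so as to satisfy the two leftover families: the special vertices $w_j$ and all the vertices of $X$. For a special vertex $w_j$ the natural witness is an $X$-neighbor $x_i$, and it then suffices to keep color $i$ out of $Q_j\setminus\{w_j\}$ so that $i$ appears in $N(w_j)$ only at $x_i$. For an $X$-vertex $x_i$ the natural witness is either another $X$-neighbor $x_j$ whose color we keep off $x_i$'s clique-neighbors, or else the single reserved-color neighbor (an $x_i$ adjacent to exactly one special vertex is automatically satisfied by color $d+1$), or, failing both, a privately assigned clique-neighbor whose color is made unique in $N(x_i)$.

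The main obstacle is that these requirements are coupled across all cliques and all of $X$ at once: the colors placed on a clique to protect its special vertex are exactly the colors that can destroy an $X$-vertex's witness, and conversely. I expect to discharge this by a case analysis on the adjacency pattern between $X$ and the cliques---distinguishing special vertices that do and do not have an $X$-neighbor (one with no $X$-neighbor must instead be cleared from inside its own clique by giving a second clique vertex a locally unique color, which is available in the regime $d\ge 2$ precisely because at least two non-reserved colors exist)---combined with a matching/Hall-type argument that hands out pairwise-disjoint private clique witnesses to those $X$-vertices served neither by an $X$-neighbor nor by a lone reserved-color neighbor. The crux will be verifying that all these local choices can be made consistently, so that no clique is simultaneously required to contain and to exclude the same color.
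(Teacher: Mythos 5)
Your high-level starting point (pairwise distinct colors on $X$, one reserved extra color $d+1$) matches the paper's, but the central device you build everything on --- ``in each clique $Q_j$, designate one special vertex $w_j$ and set $C(w_j)=d+1$'' --- is unsound. A singleton clique has no choice of special vertex, so your rule forces every singleton clique to be colored $d+1$; consequently an $X$-vertex whose entire neighborhood consists of two or more singleton cliques sees only repeated occurrences of $d+1$ and can never acquire a uniquely colored neighbor. Such graphs exist with ${\sf dc}(G)=d$: take the paper's lower-bound construction with $d=3$, but attach the big clique $\widehat K$ only to $v_1$ and $v_2$ (i.e., let the sets $T^W$ range over $W\subseteq\{v_1,v_2\}$). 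Then $N(v_3)$ consists of exactly the two singleton cliques $K_{(1,3)}$ and $K_{(2,3)}$, and one can check ${\sf dc}(G)=3$ still holds: any size-$2$ modulator must meet the induced $6$-cycle $v_1,K_{(1,2)},v_2,K_{(2,3)},v_3,K_{(1,3)}$ in a pair of opposite vertices, and each of the three such pairs leaves $v_1$ or $v_2$ in a component containing both a $\widehat K$-neighbor and a singleton-clique neighbor, which are non-adjacent. For $v_3$ all three of your fallbacks fail: it has no $X$-neighbor ($X$ is independent), it has two rather than one special neighbors, and there is no clique-neighbor left to ``privately assign'' because both of its neighbors are already forced to color $d+1$. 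The paper never imposes your invariant; in its scheme singleton cliques are typically colored from $[d]$ precisely so that they can serve as witnesses for vertices of $X$ (this is what Rule (vi) and the $X\setminus Y$ process in the proof of Lemma \ref{lem:isolated_in_X} arrange).

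Second, even where your invariant is harmless, the actual proof content is deferred: the ``case analysis \dots combined with a matching/Hall-type argument'' is exactly the part that has to be carried out, and a Hall-type (disjoint-representatives) condition has the wrong shape for it. The binding constraint is not that witnesses be pairwise distinct vertices, but that each witness's color occur exactly once in the entire neighborhood of the vertex it serves, and these neighborhoods straddle many cliques; the filler colors you must pour into a clique to protect its special vertex can silently destroy the witness of an $X$-vertex attached to that clique, a conflict invisible to any disjointness condition. The paper's resolution is the free-color mechanism (Rule (v) of Lemma \ref{lem:isolated_in_X}): designate one color $f\in[d]$ on which no vertex of $X$ relies as its unique color, and use $f$ as the universal filler. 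Manufacturing such an $f$ is where essentially all of the work lies --- Lemma \ref{lem:D2C_mod_indep_case2}, and especially Lemmas \ref{lem:perfect_matching} and \ref{lem:iso_perfect_matching}, where $G[X]$ is a perfect matching or has maximum degree one and one or two cliques must be pre-colored just to create $f$, together with Lemma \ref{lem:handlesk} for vertices of $X$ whose whole neighborhood lies inside the clique currently being colored. Your proposal has no counterpart to this idea, so the consistency question that you yourself identify as the crux remains unresolved.
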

Further, we show graphs for which ${\sf dc}(G)$ colors are not sufficient, thereby demonstrating 
that the above bound is tight.  
%We also prove that the above bound is tight by showing existence of graphs $G$ for which $d$ colors are not sufficient to \cf{} color $G$. Formally, we state the following theorem, which is proved in Section \ref{sec:lowerbound}.  
%now state the lower bound result below, which is proved in Section \ref{sec:lowerbound}. 
\begin{theorem}\label{thm:lowerbound}
%$d$ colors are not sufficient to \cf{} the graph $G$.
%For $d \geq 1$, the CFON coloring number $\chicf{} > d$.
For each value $d \geq 1$, there exist graphs $G$ such that 
${\sf dc}(G) = d$ and $\chion{} > d$.
%, where $d$ is the size of the distance to cluster of $G$. 
\end{theorem}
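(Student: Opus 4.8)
The plan is to build, for each $d \ge 1$, an explicit graph $G_d$ whose ``core'' is an independent set $X = \{x_1, \dots, x_d\}$ intended as the deletion set, together with auxiliary gadgets chosen so that $G_d[V \setminus X]$ is a disjoint union of cliques; this immediately yields ${\sf dc}(G_d) \le d$. The engine driving the lower bound is \emph{rigidity}. For every pair $i < j$ I would attach a degree-two gadget vertex $a_{ij}$ adjacent exactly to $x_i$ and $x_j$. Since $N(a_{ij}) = \{x_i, x_j\}$, any \cf{} coloring must give $x_i$ and $x_j$ distinct colors; ranging over all pairs, any coloring with $d$ colors is forced to make $X$ \emph{rainbow} (all $d$ colors appear on $X$). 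As each $a_{ij}$ has no neighbor outside $X$, it becomes an isolated vertex (a $K_1$) once $X$ is removed, so the clique decomposition of $G_d[V\setminus X]$ and the bound ${\sf dc}(G_d)\le d$ are preserved. The minimal examples $P_3$ (for $d=1$) and $C_5$ (for $d=2$), which already satisfy the strict inequality, serve as sanity checks and base cases.

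To pin down ${\sf dc}(G_d) = d$ I would avoid a direct combinatorial deletion argument and instead bootstrap from Theorem~\ref{thm:upperbound}. Once $\chi_{ON}(G_d) > d$ is established, suppose for contradiction that ${\sf dc}(G_d) = d' < d$. Theorem~\ref{thm:upperbound} then gives $\chi_{ON}(G_d) \le \max\{3, d'+1\} \le \max\{3, d\}$, which equals $d$ for $d \ge 3$, contradicting $\chi_{ON}(G_d) > d$. Hence ${\sf dc}(G_d) \ge d$, and with ${\sf dc}(G_d)\le d$ we obtain equality for all $d \ge 3$; the residual cases $d \in \{1,2\}$ are settled by the explicit examples $P_3$ and $C_5$.

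It remains to force $\chi_{ON}(G_d) > d$, which is the heart of the argument. On top of the pair gadgets I would add a gadget vertex $v$ whose open neighborhood is arranged so that, once $X$ is rainbow, \emph{every} one of the $d$ colors is guaranteed to occur at least twice in $N(v)$. The cleanest realization makes $v$ adjacent to $X$ and to a second, independently rainbow-forced set, so that $N(v)$ carries two vertices of each color. Assuming a coloring with only $d$ colors, the pair gadgets make $X$ rainbow, whence no color is unique in $N(v)$; thus $v$ has no uniquely colored neighbor, a contradiction. Therefore at least $d+1$ colors are required.

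The main obstacle is that these two demands pull against each other: forcing an independent set to be rainbow requires gadgets that survive the deletion of $X$, yet any surviving gadget touching a non-deleted vertex threatens to wreck the clique decomposition of $G_d[V\setminus X]$ and inflate ${\sf dc}$ past $d$. The crux of the construction is therefore to engineer the $v$-gadget, and whatever feeds the second rainbow set into $N(v)$, so that it collapses into cliques upon removing $X$ while still rigidly enforcing the doubling of every color at $v$. Correspondingly, the hardest step of the proof is the rigidity analysis itself: showing that in \emph{every} assignment of $d$ colors the gadget constraints propagate so as to leave $v$ unsatisfiable, ruling out any escape route that exploits the freedom available in coloring the clique vertices.
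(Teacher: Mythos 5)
Your proposal gets the two easy ingredients right, but the heart of the theorem is missing. The pair gadgets $a_{ij}$ are exactly the paper's singleton cliques $K_{(i,j)}$, and they do force any $d$-coloring to be rainbow on $X$. Your bootstrap for ${\sf dc}(G_d)=d$ (deduce ${\sf dc}(G_d)\geq d$ from Theorem~\ref{thm:upperbound}, since ${\sf dc}(G_d)=d'<d$ with $d\geq 3$ would give $\chi_{ON}(G_d)\leq\max\{3,d'+1\}\leq d$) is a legitimate alternative to the paper's direct claim, and your base cases $P_3$, $C_5$ check out --- but all of this is contingent on first proving $\chi_{ON}(G_d)>d$, which is precisely the step you never carry out. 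Your one concrete suggestion for that step, making $v$ adjacent to $X$ and to ``a second, independently rainbow-forced set,'' cannot be realized under the constraint ${\sf dc}(G_d)\leq d$: any gadget rigid enough to force pairwise-distinct colors on a set disjoint from $X$ (degree-two attachment vertices, say) survives the deletion of $X$, and together with that set it leaves induced paths on three vertices, so $G_d[V\setminus X]$ is no longer a disjoint union of cliques. Moreover, no conflict-free constraint can force \emph{which} colors appear where --- only distinctness --- so even a rainbow second set would not let you pre-match its colors against those of $X$. You flag this tension yourself and defer its resolution to ``the rigidity analysis,'' which is an acknowledgment that the proof's core is absent, not a proof.

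The paper resolves this obstruction by abandoning the idea of a second \emph{forced} rainbow set altogether. It places a single clique $\widehat{K}$ of size $(d+1)2^d$ in $G[V\setminus X]$, partitioned into groups $T^W$, one for each subset $W\subseteq X$, with $|T^W|=d+1$ and $N(u)\cap X=W$ for every $u\in T^W$; this keeps $G[V\setminus X]$ a disjoint union of cliques. The doubled colors are then extracted \emph{adaptively} from an arbitrary $d$-coloring rather than prescribed: a vertex $u_1\in T^\emptyset$ has no neighbor in $X$, so its uniquely colored neighbor $w_1$ lies inside $\widehat{K}$, say with color $1$; then at least $d$ vertices of $T^{\{v_1\}}$ see both $v_1$ and $w_1$, hence see color $1$ twice, so one of them needs a unique neighbor $w_2$ of a fresh color, say $2$; iterating over $T^{\{v_1,v_2\}},\dots,T^X$ produces a vertex $u_{d+1}\in T^X$ that sees every color in $[d]$ at least twice, a contradiction. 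The multiplicity $d+1$ in each group is what guarantees the induction never runs out of usable vertices. This adaptive, subset-indexed construction --- exponential in size, with the ``second rainbow set'' $\{w_1,\dots,w_d\}$ emerging from the coloring itself --- is the idea your proposal lacks and cannot be replaced by the forcing-gadget approach you sketch.
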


Theorem \ref{thm:lowerbound} is first proved in Section \ref{sec:lowerbound}. The rest of the paper
is devoted to the proof of Theorem \ref{thm:upperbound}. 

\subsection{Preliminaries}
In this paper, we consider only simple, finite, undirected and connected graphs that have at least two vertices. 
If the graph has more than one connected component, Theorem \ref{thm:upperbound} follows 
by its application to each component independently.
Moreover, we assume that $G$ does not have any isolated vertices as 
there is no \cf{} coloring for such graphs.
%the \cf{} coloring problem
%is not defined for isolated vertices. 
We denote the set $\{1, 2, \cdots, d\}$ by $[d]$. 
We use the function $C:V\rightarrow [d + 1]$
to denote the color assigned to a vertex. 
The open neighborhood of a vertex $v$, denoted by $N(v)$, is the set of vertices adjacent to $v$. 
The degree of a vertex $v$, denoted $\deg (v)$ is defined as $|N(v)|$. Sometimes, we use the notation
$\deg_A(v) = |N(v) \cap A|$,  where $A\subseteq V$. We use the notation
$G[A]$ to denote the induced graph on the vertex set $A$. We use standard graph theoretic 
terminology from the textbook by Diestel \cite{Diestel}.

During the coloring process, for each vertex $v$, we will designate a vertex as the \emph{uniquely colored neighbor} of $v$, denoted by $\ucn(v)$. The vertex $\ucn(v)$ is a vertex
$w \in N(v)$ such that  $C(w)\neq C(x)$, $\forall x\in N(v)\setminus \{w\}$.
%We use $\ucn:V\rightarrow V$
%to denote the uniquely colored neighbor of a vertex.
We will also frequently refer to a set $X$ such that $G[V \setminus X]$ is a disjoint 
union of cliques. For the sake of brevity,
instead of referring to a component or maximal clique $K$ of 
$G[V \setminus X]$, we will say ``$K$ is a clique in $G[V \setminus X]$''.

\section{Lower Bound}\label{sec:lowerbound}
In this section, we prove Theorem \ref{thm:lowerbound}.
We will see the existence of graphs $G$ for which 
$\chicf{} >{\sf dc}(G)$. 
%Let ${\sf dc}(G)=d \geq 1$. That is, there is a set $X\subseteq V$, 
%with $|X|=d$ such that $G[V\setminus X]$ is a disjoint union of cliques. 

\noindent
\textbf{Construction of graph $G$:} Given a positive integer $d$, we construct the graph $G$
such that ${\sf dc}(G) = d$. It consists of three parts as described below.
\begin{itemize}[itemsep=-0.5ex]
    \item The set $X$ is an independent set of $d$ vertices ${v_1, v_2, \cdots, v_d}$. 
          Note that $|X| = d$, and $G[V\setminus X]$ will be a disjoint union of cliques. 
         
    \item Singleton cliques $K_{(i,j)}$,   $\forall 1 \leq i < j \leq d$. 
    For each $K_{(i,j)}$, we have $N(K_{(i,j)}) = \{v_i, v_j\}$. 
    \item A clique $\widehat{K}$ that has $(d+1) 2^d$ vertices. 
          The vertices of the clique $\widehat{K}$ consist of $2^d$ disjoint sets $T^W$, 
          one corresponding to each subset $W \subseteq X$. For each $v \in T^W$, we have $N(v) \cap X = W$.
          Moreover, we have $|T^W| = d + 1$, for each $W \subseteq X$.
\end{itemize}
\begin{proof}[Proof of Theorem \ref{thm:lowerbound}]
It can easily be noted that  ${\sf dc}(G) = d$. We will now see that $\chion{} > d$.

The singleton cliques $K_{(i,j)}$ force 
each vertex in $X$ 
to be assigned a distinct 
color. WLOG, let $C(v_i)=i$ 
for each $v_i\in X$. 
%the color $i$, for $1\leq i \leq d$.
%Each vertex in $X$ is given a distinct color. This is because of the singleton cliques $K_{(i,j)}$. 
The colors ${1, 2, \cdots, d}$ are used exactly once in $X$. 
Now, we prove that $d$ colors are not sufficient to color the clique $\widehat{K}$. 

We first %deal with the vertices in $T_X^\emptyset \subseteq C$. 
consider a vertex $u_1\in T^\emptyset$. The vertex $u_1$ does not have any neighbors in $X$ and hence
has its uniquely colored neighbor from $\widehat{K}$. Let the vertex $w_1 \in \widehat{K}$ be the
uniquely colored neighbor of $u_1$. WLOG, let $C(w_1)=1$. 
Now, consider the vertices in $T^{\{v_1\}}$. 
At least $d$ vertices in $T^{\{v_1\}}$ 
see both $v_1$ and $w_1$ as their neighbors, and 
hence these vertices cannot have 1 as the unique color in their 
neighborhood. Let $u_2$ be such a vertex. 
WLOG, let $w_2\in C$ be the vertex such that $C(w_2)=2$ 
and $w_2$ acts as the uniquely colored neighbor 
for $u_2$. Of the vertices in $T^{\{v_1, v_2\}}$,
at least $d-1$ of them see $w_1, w_2$ in addition to $v_1, v_2$
as neighbors. Hence these vertices cannot have the colors 1 or 2 
as the unique color in their neighborhood.

We continue this reasoning and show that there exists at 
least one vertex, say $u_{d+1} \in T^X$, 
that sees all the colors $\{1, 2, \ldots, d\}$
%There exists at least 1 vertex in $T_X^X$ that 
%sees the color $d-1$ 
at least twice in its neighborhood. 
Hence $u_{d+1}$ cannot have any of the colors 
$1, 2, \ldots, d$ as the unique color in its neighborhood.
Hence we 
%need to use another color in order to CFON color $G$.
require a new color to \cf{} color $G$. 
%Note that we have colored $d$ vertices in $C$ and each of them has a distinct color. Since there are $d+1$ vertices in $T_X^X$, there 
%is at least one vertex $w_{d+1}\in T_X^X$ which sees each of the colors ${1, 2, \cdots , d}$ twice. 
%There does not exist a uniquely colored neighbor for $w_{d+1}$ no matter how we color the vertices of $C$.
%\qed
\end{proof}
\section{Upper Bound}\label{sec:upperbound}

In this section, we prove Theorem \ref{thm:upperbound}. Since it involves several cases and detailed analyses, we first present an overview of the proof, before getting into the details.

\subsection{Overview of the Proof}
Given a graph $G=(V,E)$, and a set of vertices $X\subseteq V$ such that $|X|=d$, we have that $G[V{\setminus} X]$ is 
a disjoint union of cliques. 
We require $d$ colors, one for
each vertex in $X$. 
%distinctly. 
This is because $G[V\setminus X]$ may contain
$\binom{d}{2}$ singleton cliques, such that each of these 
cliques has degree 2, and adjacent to a pair of vertices in $X$. 
%We know that a clique requires 3 colors for a \cf{} coloring.
Since a clique can be colored using at most 3 colors, 
it is easy to see that $d+3$ colors are sufficient to \cf{} color $G$ when $G[X]$ is connected. 
Though it is less straightforward, the bound of $d+3$ can be extended
to the case when $G[X]$ is not connected as well \cite{wgarxiv}. 
%It is shown that $d+3$ colors are sufficient to \cf{} $G$. 
It is a challenge to further improve the bound to $d+1$.
%When It is a challenge to \cf{} color the whole of $G$ using $d+1$ colors. 
Our proof requires several cases and subcases 
since there does not seem to be a universal approach 
that leads to a desired coloring. 
The detailed case analysis is necessary because 
of the different forms the induced graph $G[X]$ can take.

Except for some special cases, we will color each vertex in $X$ with a distinct color
from $[d]$.
Our coloring algorithm consists of two phases, an initial phase and a completion
phase. In the initial phase, we color all the vertices of $X$, and identify uniquely colored
neighbors for some vertices in $X$. The key requirement of this phase to identify 
a \emph{free color} $f$, which is a color in $[d]$ that will not serve as a unique color in the neighborhood of any
vertex in $X$. This is straightforward in some cases, like when $G[X]$ has a component 
of size at least $3$. The cases where all the vertices of $G[X]$ have degree 1 (Lemma \ref{lem:perfect_matching}), or all the 
vertices of $G[X]$ have degree 0 or 1 (Lemma \ref{lem:iso_perfect_matching}) prove to be particularly challenging. In some of the cases, this is accomplished by coloring one
or two of the cliques in $G[V\setminus X]$.
The full set of conditions that are to be satisfied by the initial phase
is listed as the Rules of Lemma \ref{lem:isolated_in_X}. 

After the initial phase, we are ready to run the completion phase, which is executed in Lemma \ref{lem:isolated_in_X}. 
The goal of the completion phase is to color the rest of the graph while retaining the 
uniquely colored neighbors of those vertices that had been identified in the intial phase.
In the completion phase, we first identify uniquely colored neighbors for those
vertices in $X$, for which it has not been identified. This involves coloring some of the 
vertices in $V\setminus X$, and hence may partially color 
some of the cliques in $G[V\setminus X]$. The cliques in $G[V\setminus X]$ are colored one by one. 
We have to use different approaches to color them, based on the number of vertices that are already colored in the clique. 
The general results are presented for the case when $d = |X| \geq 3$, and 
the cases $d=1$ and $d=2$ need to be treated differently.

The case $d=1$ is straightforward, but the case $d=2$ is somewhat involved in itself. 
These
cases are presented first in Section \ref{sec:smallx} as they serve as  ``warm-ups'' for the pattern
and the type of arguments that will be used in the general $d \geq 3$ case. However, we 
reiterate that the general case involves a lot of cases and sub-cases that need to be treated
separately and carefully.

%\textcolor{blue}{We prove the theorem in three cases when $|X|=1$, $|X|=2$ (in Section %\ref{sec:smallx}) and $|X|\geq 3$. For the case 
%when $|X|\geq 3$, we prove the bound separately for the cases when 
%$X$ is an independent set and not an independent set in 
%Sections \ref{sec:indset} and \ref{sec:notindset} respectively. }

\subsection{The case when $|X| < 3$}\label{sec:smallx}
We handle the cases $|X| = 1$ and $|X| = 2$ separately. 

\begin{lemma}\label{lem:xeq1}
Let $G = (V,E)$ be a graph and $X\subseteq V$ be a set of vertices such that 
$|X|=1$ and $G[V\setminus X]$ is a disjoint union of cliques. 
Then $\chi_{ON}(G)\leq 3$. 
\end{lemma}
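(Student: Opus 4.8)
The plan is to exploit the very rigid structure that $|X| = 1$ forces. Write $X = \{x\}$, and let $K_1, \dots, K_m$ be the cliques of $G[V \setminus X]$. Since $G$ is connected and deleting $x$ leaves the $K_i$ pairwise non-adjacent, every $K_i$ must contain at least one neighbour of $x$; in particular $\deg(x) \geq 1$. I would work with three colors, permanently reserve color $1$ for $x$, and build the coloring around the idea that every clique should contain a single distinguished vertex whose color is unique in the neighbourhoods where it is needed.

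The first step is to color each clique locally. The key observation is that a clique vertex adjacent to $x$ can use $x$ (the unique color-$1$ vertex) as its uniquely colored neighbour, while a single distinguished clique vertex can serve the rest. Concretely, I would color a clique $K$ as follows. If \emph{every} vertex of $K$ is adjacent to $x$, color all of $K$ with color $2$, so that each vertex sees $x$ as its unique color-$1$ neighbour. Otherwise pick a vertex $h \in K$ with $h \not\sim x$, color $h$ with color $3$, color one further vertex $g \in K$ (which exists, since connectivity guarantees $|K| \geq 2$) with color $1$, and color the remaining vertices with color $2$; then $h$, the unique color-$3$ vertex of $K$, serves every other vertex of $K$, while $h$ itself, having no neighbour $x$, is served by $g$. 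A short check confirms the \cf{} condition for each vertex inside every clique.

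The remaining, and main, difficulty is $x$ itself: $x$ may have arbitrarily many neighbours, so I must manufacture a color that appears \emph{exactly once} in $N(x)$. The device is to single out one neighbour $v^*$ of $x$, lying in some clique $K^*$, color $v^*$ with color $3$, recolor the rest of $K^*$ with color $2$, and guarantee that no other neighbour of $x$ ever receives color $3$. This is precisely why the per-clique rule above only ever assigns color $3$ to a vertex $h$ with $h \not\sim x$: apart from $v^*$, color $3$ never touches $N(x)$. Hence $v^*$ is the unique color-$3$ vertex in $N(x)$, so $x$ is satisfied, and $v^*$, being adjacent to $x$ (the unique color-$1$ vertex in its neighbourhood), is itself satisfied as well.

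I expect the main obstacle to be exactly this global coordination for $x$: the local clique colorings must be chosen so as never to interfere with the single reserved color-$3$ neighbour of $x$, which is what forces the case distinction (all clique vertices adjacent to $x$ versus not) in the local step. Once that compatibility is designed in, assembling the coloring of $K^*$ together with the remaining cliques and verifying the \cf{} condition for the three kinds of vertices — namely $x$, the distinguished vertices $v^*, h, g$, and ordinary color-$2$ clique vertices — is routine.
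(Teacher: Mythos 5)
Your proof is correct and takes essentially the same approach as the paper's: color $x$ with a fixed color, reserve a second color to appear exactly once in $N(x)$ at one designated neighbor (keeping that color off every other neighbor of $x$ by assigning it elsewhere only to vertices non-adjacent to $x$), color cliques whose vertices all see $x$ monochromatically so they rely on $x$, and in every other clique let a vertex non-adjacent to $x$ act as the locally unique hub. The differences are purely cosmetic — a permutation of which color plays which role, and which vertex inside a clique serves as the hub versus its helper.
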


\begin{proof}
We explain how to assign $C: V  \rightarrow \{1, 2, 3\}$ such that 
$C$ is a CFON coloring of $G$. Let $X = \{v_1\}$. We assign $C(v_1) = 1$. 

\noindent \textbf{Initial phase:} We first choose a clique $K$ in $G[V \setminus X]$. 
Since $G$ is connected, there exists $v \in K$ such that $v_1 \in N(v)$. There 
are two cases.

\begin{itemize}
    \item $|K| = 1$.
    
    That is, $K = \{v\}$. We assign $C(v) = 2$. We have $\ucn(v_1) = v$ and $\ucn(v) = v_1$.
    
    \item $|K| \geq 2$.
    
    We choose a vertex $v' \in K \setminus \{v\}$. We assign $C(v) = 2$, $C(v') = 3$, and
    the vertices (if any) in $K \setminus \{v, v'\}$ are assigned 1. We have $\ucn(v_1) = 
    \ucn(v') = v$, and for all $y \in K \setminus \{v'\}$, we have $\ucn(y) = v'$.
\end{itemize}

\noindent \textbf{Completion phase:} The lone vertex $v_1$ of $X$ is already colored 1
and sees 2 as the unique color in its neighborhood. We have also colored one clique in $G[V \setminus X]$.
For all the uncolored cliques $K$ in $G[V \setminus X]$, we color $K$ as per the applicable case.

\begin{itemize}
    \item All the vertices in $K$ see $v_1$ as their neighbor.
    
    For all $y \in K$, assign $C(y) = 3$. The vertex $v_1$ acts as the uniquely colored
    neighbor for all $y \in K$.
    
    \item There exists $v \in K$ such that $v_1 \notin N(v)$.
    
    Notice that if $|K| = 1$, then the lone vertex in $K$ has to necessarily see $v_1$ as a
    neighbor. Hence in this case, we have $|K| \geq 2$. Choose a vertex $v' \in K \setminus \{v\}$.
    Assign $C(v) = 2$, $C(v') = 3$, and 
    the vertices (if any) in $K \setminus \{v, v'\}$ the color 1.
    
    We have $\ucn(v') = v$ and for all $y \in K \setminus \{v'\}$, we have $\ucn(y) = v'$.
\end{itemize}
\end{proof}

\begin{lemma}\label{lem:xeq2}
Let $G=(V,E)$ be a graph and $X\subseteq V$ be a set of vertices such that 
$|X|=2$ and $G[V\setminus X]$ is a disjoint union of cliques. 
Then $\chi_{ON}(G)\leq 3$. 
\end{lemma}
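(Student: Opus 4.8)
The plan is to mirror the two-phase strategy of Lemma~\ref{lem:xeq1}. Write $X=\{v_1,v_2\}$, set $C(v_1)=1$ and $C(v_2)=2$, and keep the color $3$ as a spare. Since only two vertices are specially colored while three colors are available, at the end of the initial phase at least one color is \emph{free}: among the (at most two) colors used as $\ucn(v_1)$ and $\ucn(v_2)$, some color of $\{1,2,3\}$ is the unique-neighbor color of neither $v_1$ nor $v_2$. Adding neighbors of a free color to $v_1$ or $v_2$ never destroys their uniquely colored neighbor, and this is the device that drives the completion phase.

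In the initial phase I would first secure $\ucn(v_1)$ and $\ucn(v_2)$, splitting on whether $v_1v_2\in E$. When $v_1v_2\notin E$, connectivity (and the absence of isolated vertices) gives a clique vertex adjacent to $v_1$ and one adjacent to $v_2$; coloring such a vertex $3$ and declaring it the uniquely colored neighbor works, the one delicate subcase being a vertex adjacent to both $v_1$ and $v_2$, which when colored $3$ can serve as $\ucn(v_1)$ and $\ucn(v_2)$ simultaneously (provided no other neighbor of $v_1$ or $v_2$ is later colored $3$). When $v_1v_2\in E$ I would also have the option of taking $\ucn(v_1)=v_2$ and $\ucn(v_2)=v_1$. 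In either case I color the one or two cliques touched here completely, giving their internal vertices uniquely colored neighbors, and I record which color is free.

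In the completion phase each remaining clique $K$ of $G[V\setminus X]$ is colored according to its size and its attachment to $X$. A clique all of whose vertices see exactly the same part of $X$ is easiest: if they all see only $v_i$, coloring all of $K$ with a color different from $C(v_i)$ that is safe for $v_i$ makes $v_i$ the shared uniquely colored neighbor of every vertex of $K$ and does not touch the other $X$-vertex; if they all see both $v_1$ and $v_2$, coloring $K$ with a single safe color leaves each vertex seeing $1$ and $2$ exactly once. For a clique containing a vertex with no $X$-neighbor I would use the distinguished-pair pattern of Lemma~\ref{lem:xeq1} (two vertices colored distinctly, the rest with a bulk color), anchored on such an $X$-free vertex and with the colors chosen to respect the safe colors of whichever $X$-vertices $K$ touches. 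Singleton cliques cannot provide an internal unique neighbor and must fall back on $v_1$ or $v_2$; a singleton seeing both of them already has $1$ and $2$ each once in its neighborhood and so is automatically satisfied since $C(v_1)\neq C(v_2)$.

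The main obstacle is the interaction between the two requirements: coloring a clique vertex adjacent to $X$ necessarily adds its color to $N(v_1)$ or $N(v_2)$, so I must draw every clique's coloring from the colors that are safe for the $X$-vertices it touches, while still forcing an internal uniquely colored neighbor for each clique vertex. The tightest spots are (i) vertices adjacent to both $v_1$ and $v_2$ inside a clique that also contains vertices needing an internal unique neighbor, and (ii) keeping $\ucn(v_1)$ and $\ucn(v_2)$ intact when many cliques attach to both vertices; these are precisely what force the split on $v_1v_2\in E$ and on whether $v_1$ and $v_2$ have private clique neighbors, and are why this $|X|=2$ case, although a warm-up, already requires genuine care.
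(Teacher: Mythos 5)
Your completion phase omits the hardest kind of clique, and with the initial choices you fixed, the omission cannot be repaired. Your three categories --- cliques whose vertices all see the same subset of $X$, cliques containing a vertex with no $X$-neighbor, and singletons --- do not cover a clique $K$ of size at least two in which every vertex has an $X$-neighbor but different vertices see different parts of $X$. Concretely, take the subcase $v_1v_2\notin E$ in which every vertex of $V\setminus X$ has at most one neighbor in $X$, and let $K$ be a clique, untouched by your initial phase, with $p\geq 3$ vertices adjacent only to $v_1$ and $q\geq 3$ vertices adjacent only to $v_2$. In your scheme $C(v_1)=1$, $C(v_2)=2$, and both $\ucn(v_1)$ and $\ucn(v_2)$ are colored $3$, so color $3$ must occur exactly once in $N(v_1)$ and exactly once in $N(v_2)$; since every vertex of $K$ lies in $N(v_1)\cup N(v_2)$, no vertex of $K$ may receive color $3$, and $K$ must be colored from $\{1,2\}$. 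Let $a$ and $b$ be the numbers of vertices of $K$ colored $1$ and $2$, so $a+b=p+q\geq 6$. A $v_1$-side vertex colored $1$ has a unique color in its neighborhood iff $a=1$ or $b=1$; colored $2$, iff $a=0$ or $b=2$; a $v_2$-side vertex colored $1$, iff $a=2$ or $b=0$; colored $2$, iff $a=1$ or $b=1$. These cannot all hold: if some $v_1$-side vertex is colored $1$, then either $a=1$, forcing the remaining $v_1$-side vertices (colored $2$) to require $b=2$ and hence $a+b=3<6$, or $b=1$, forcing the at least $q-1\geq 2$ $v_2$-side vertices colored $1$ to require $a=2$, contradicting $a=p+q-1\geq 5$; hence all $v_1$-side vertices are colored $2$, so $b\geq 3$, which forces $a=0$, and then the $v_2$-side vertices (colored $2$) require $a=1$ or $b=1$, a contradiction. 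So some vertex of $K$ is left with no uniquely colored neighbor, no matter how you color $K$.

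The paper escapes exactly by abandoning your starting normalization: in this subcase it sets $C(v_1)=C(v_2)=3$, colors one clique so that a vertex colored $1$ serves as $\ucn(v_1)$ and a vertex colored $2$ serves as $\ucn(v_2)$, and then colors every remaining clique using only colors $1$ and $2$ ($v_1$-neighbors get $2$, $v_2$-neighbors get $1$); each clique vertex's lone $X$-neighbor, colored $3$, is then automatically its uniquely colored neighbor. The moral is that your free-color bookkeeping is not the real obstruction; the obstruction is that some color must remain admissible on neighbors of \emph{both} $v_1$ and $v_2$, which your choice (both unique colors equal to $3$, with $C(v_1)\neq C(v_2)$) destroys. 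A repair inside your framework would be to cross the unique colors ($\ucn(v_1)$ colored $2$, $\ucn(v_2)$ colored $1$), which is the same idea in disguise. A similar trap affects your suggested option in the case $v_1v_2\in E$ of taking $\ucn(v_1)=v_2$ and $\ucn(v_2)=v_1$: a clique consisting of one $X$-free vertex together with two or more vertices each adjacent to both $v_1$ and $v_2$ forces all the latter to be colored $3$, leaving the $X$-free vertex with no unique color in its neighborhood; this is why the paper's Case 2 instead performs the $v_1$-seeing/$v_2$-seeing analysis and anchors the unique neighbors of $v_1$ and $v_2$ at carefully chosen clique vertices.
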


\begin{proof}
Let $X = \{v_1, v_2\}$. We explain how to assign $C:V\rightarrow \{1,2,3\}$ to get a CFON coloring of $G$.  
There are two cases depending on whether $X$ is an independent set or not. 

\noindent
\textbf{Case 1: $X$ is an independent set. That is, $\{v_1,v_2\}\notin E(G)$.} We have two subcases. 

\begin{itemize}
    \item Every vertex in $V\setminus X$ has at most one neighbor in $X$. 
    %$deg_X(v)\leq 1$. 
    
    \textbf{Initial phase:} Since $G$ is connected, there exists a clique $K$ in $G[V \setminus X]$ 
    such that $N(v_1)\cap  K\neq \emptyset$ and $N(v_2)\cap K\neq \emptyset$. 
    Let $v\in N(v_1)\cap K$ and $v'\in N(v_2)\cap K$. Notice that $v\neq v'$, otherwise
    we would be violating the subcase we are in.

    We assign $C(v_1)=C(v_2)=3$, 
    $C(v)=1$, $C(v')=2$ 
    and the remaining vertices (if any) in $K\setminus \{v,v'\}$ the color 3. 
    
    We get that $\ucn(v_1)=v$ and $\ucn(v_2)=v'$.  Also $\ucn(v)=v'$ and for each $y\in K\setminus \{v\}$, $\ucn(y)=v$.  

    \textbf{Completion phase:} Now we color the remaining cliques in $G[V\setminus X]$. Let $K$ be an uncolored clique. There are two possibilities.
    
    \begin{itemize}
        \item Each vertex $y\in K$ has $deg_X(y)=1$. 
        
        For each vertex $y\in K$, if $v_1\in N(y)$, assign $C(y)=2$. 
        Else, assign $C(y)=1$. 
        The uniquely colored neighbor of each vertex in $K$ is its lone neighbor in $X$. 
        %All vertices in $K'$ 

        \item There exists $v\in K$ such that $\deg_X(v)=0$. 
        
        Since $G$ is connected, there exists $v' \in K$ such that 
        $\deg_X(v') \neq 0$. WLOG let $v_1\in N(v')$. 
        We assign $C(v)=1, C(v')=2$ and the vertices (if any) in $K\setminus \{v,v'\}$ the color 3. 
        
        We get that $\ucn(v)=v'$ and for each $y\in K\setminus \{v\}$, 
        $\ucn(y)=v$. 
    \end{itemize}

    \item There exists a vertex $v\in V\setminus X$ such that $\deg_X(v)=2$.
    
\textbf{Initial phase:} Let $v \in K$, where $K$ is a clique in $G[V \setminus X]$.
We first assign $C(v_1)=1$ and $C(v_2)=2$.

If $|K|=1$, we assign $C(v)=1$ and we get that 
$\ucn(v_1)=\ucn(v_2)=v$ and $\ucn(v)=v_1$. %\textcolor{red}{Color 2 is the free color.}

Else $|K|\geq 2$ and we have the following cases. 
\begin{itemize}

    \item There exists a vertex $v'\in K\setminus \{v\}$ such that $v_2 \notin N(v')$.

    We assign $C(v)=2$, $C(v')=3$ and the vertices (if any) in $K\setminus \{v,v'\}$ 
    the color 1. 
    %\textcolor{red}{We have  color 1 as the free color.}
    We get that $\ucn(v_1)=\ucn(v_2)=v$, 
    $\ucn(v')=v$ and 
    $\ucn(y)=v'$ for all 
    $y\in K\setminus \{v'\}$. 
    
    \item Else, for each $y\in K$, 
    we have $v_2 \in N(y)$. 
    
    We assign $C(v)=1$ and the remaining vertices in 
    $K\setminus \{v\}$ with the color 3. 
    %\textcolor{red}{We have color 2 as the free color}.
    We get that $\ucn(v_1)=\ucn(v_2)=v$ and 
    $\ucn(y)=v_2$ for all $y\in K$. 
\end{itemize}

In each of the above case, both $v_1$ and $v_2$ have the same unique colors
from $\{1, 2\}$. The other color in $\{1, 2\}$ does not serve as the unique color
of $v_1$ and $v_2$ and we refer to it as the \emph{free color}\footnote{
The notion of free color is used crucially in the proof of the general
$|X| \geq 3$ case.}.

\textbf{Completion phase:} WLOG let $v_1$ and $v_2$
have the color 1 as the unique color, and 2 is the free color.

Now, we extend this coloring to the cliques $K\subseteq G[V\setminus X]$.

\begin{itemize}
    \item There exists a vertex $v\in K$ such that $v_1 \in N(v)$.
    
    Assign $C(v) = 3$ and the vertices (if any) in $K \setminus \{v\}$ the color 2.
    We have $\ucn(v) = v_1$ and for all $y \in K \setminus \{v\}$, $\ucn(y) = v$.
    
    \item None of the vertices in $K$ see $v_1$ as a neighbor.
    
    \begin{itemize}
       \item All the vertices in $K$ see $v_2$ as a neighbor.
    
        Assign the color 3 to all the vertices in $K$. For each vertex $y \in K$, we have
        $\ucn(y) = v_2$.
    
        \item There exists $v,v' \in K$ such that $v_2 \in N(v)$ and $v_2 \notin N(v')$.
        
        Assign $C(v) = 3, C(v') = 1$ and the vertices (if any) in $K\setminus \{v,v'\}$  
        the color 2. We get that $\ucn(v) = v'$ and  $y \in K \setminus \{v\}$, $\ucn(y) = v$.
    \end{itemize}
    
\end{itemize}

\end{itemize}

\noindent
\textbf{Case 2: $X$ is not an independent set. That is, $\{v_1, v_2\} \in E(G)$.}

Before we explain how we CFON color the graph, we need to set up notation. A clique
$K\subseteq G[V \setminus X]$ is called \emph{$v_1$-seeing} (\emph{$v_2$-seeing}) if
for all vertices $y \in K$, we have $v_1 \in N(y)$ ($v_2 \in N(y)$).

\begin{itemize}

    \item Each vertex $v\in V\setminus X$ that has $\deg_X(v) = 1$ appears in a clique
    $K$ that is $v_1$-seeing or $v_2$-seeing.
    
    In other words, each clique $K$ in $G[V \setminus X]$ satisfies one of the two conditions:
    (i) $K$ is $v_1$-seeing or $v_2$-seeing, or (ii)
    each vertex $v\in K$ has either $deg_X(v)=0$ or $deg_X(v)=2$. 
    
    \textbf{Initial phase:} There are two cases. 
    
    \begin{itemize}
        \item There is a vertex $v \in V \setminus X$ such that  $\deg_X(v)=2$. 
        
        Let $v \in K$, where $K$ is a clique in $G[V \setminus X]$. 
        We assign $C(v_1) = 1$, $C(v_2) = 2$, $C(v)=3$ and the vertices (if any) in $K \setminus \{v\}$ the color 2.
        We have $\ucn(v_1)=\ucn(v_2)=v$, $\ucn(v)=v_1$ and for the vertices (if any)
        $y\in K\setminus \{v\}$,    $\ucn(y)=v$. 
        
        \item For all $y \in V \setminus X$, we have $\deg_X(y)<2$.
        
        Since $G$ is connected, there is a vertex $v$ in each of the cliques in $G[V\setminus X]$
        such that $\deg_X(v)=1$. 
        By the case definition, we have that each of the cliques must be 
        $v_1$-seeing or $v_2$-seeing. If all the cliques were $v_1$-seeing,
        %(since there is no vertex $v$ with $\deg_X(v) = 2$)
        it follows that $N(v_2) = \{v_1\}$. This means that
        $\mathsf{dc}(G) = 1$ and this case has been addressed
        in Lemma \ref{lem:xeq1}. 
        By an analogous argument, all the cliques cannot be $v_2$-seeing as well.
        Hence  there are cliques $K_1, K_2 \subseteq G[V \setminus X]$
        such that $K_1$ is $v_1$-seeing and $K_2$ is $v_2$-seeing. 
        
        By the case definition, we have that for all vertices
        $y\in K_1$, $\deg_X(y)<2$. Since $K_1$ is $v_1$-seeing, 
        it follows that for all $y\in K_1$, we have $N(y) \cap X = \{v_1\}$. 
        We choose a vertex $v \in K_1$. 
        We assign $C(v_1) = 1$, $C(v) = 3$, and
        the vertices (if any) in $K_1 \setminus \{v\}$, 
        the color 2. We have $\ucn(v_1) = v$, $\ucn(v) = v_1$,
        and for the vertices (if any) $y\in K\setminus \{v\}$,    $\ucn(y)=v$. 
        
        Similarly, for all $y\in K_2$, we have $N(y) \cap X = \{v_2\}$.
        We choose a vertex $v' \in K_2$ and assign $C(v_2) = 2$, $C(v') = 3$, and
        the vertices (if any) in $K \setminus \{v\}$, 
        the color 1. We have $\ucn(v_2) = v'$, $\ucn(v') = v_2$,
        and for the vertices (if any) $y\in K\setminus \{v'\}$,    $\ucn(y)=v'$. 
    \end{itemize}
    
    In each of the above cases, we have $C(v_1) = 1$, $C(v_2) = 2$, and the unique 
    color seen by $v_1$ and $v_2$ is 3.

    \textbf{Completion phase:} Now we color the remaining cliques in $G[V\setminus X]$. Let $K$ be an uncolored clique. There are two possibilities.
     
     \begin{itemize}
         \item For each $y\in K$, we have $\deg_X(y) \geq 1$.
         
         If $K$ is $v_1$-seeing, then for each vertex $y\in K$, assign $C(y)=2$ 
         and we get $\ucn(y)=v_1$. 
         If $K$ is $v_2$-seeing, then for each vertex $y\in K$, assign $C(y)= 1$ 
         and  we get $\ucn(y)=v_2$.  
         (If  $\deg_X(y) = 2$ for each $y \in K$, then either of the above assignments
         work.)
         
         \item There exists a vertex $v\in K$ such that $\deg_X(v) = 0$.
         
         Since $G$ is connected, $K$ has a vertex $v'$ such that $\deg_X(v') > 0$.
         Since $K$ is not $v_1$-seeing or $v_2$-seeing, it must be the case that 
         $\deg_X(v') = 2$.
         We assign $C(v)=3$, $C(v')=1$ and the vertices (if any) in $K \setminus \{v, v'\}$
         the color 2. 
         We get that $\ucn(v)=v'$ and for each $y\in K\setminus \{v\}$, $\ucn(y)=v$. 
     \end{itemize}

    \item There exists a vertex $v\in V\setminus X$ that has $\deg_X(v) = 1$ 
    that appears in a clique $K$ that is neither $v_1$-seeing nor $v_2$-seeing.
    
    \textbf{Initial phase:} WLOG, let $N(v) \cap X = \{v_1\}$. Since $K$ is not $v_1$-seeing, there exists $v' \in K$
    such that $v_1 \notin N(v')$. 
    
    We assign $C(v_1) = 1$, $C(v_2) = 2$, $C(v) = 1$, $C(v') = 3$, and the vertices
     (if any) in $K \setminus \{v, v'\}$ the color 2.  We have $\ucn(v_1) = v$, 
     $\ucn(v_2) = v_1$, $\ucn(v') = v$, and for each $y \in K \setminus \{v'\}$, $\ucn(y) = v'$.
     Note that the unique color seen by both $v_1$ and $v_2$ is 1.
     
    \textbf{Completion phase:} Now we color the remaining cliques in $G[V\setminus X]$. Let $K$ be an uncolored clique. We have the following cases.
    
    \begin{itemize}
        \item There exists a vertex $v\in K$ such that $v_1 \in N(v)$.
        
        Assign $C(v) = 3$, and the vertices (if any) in $K \setminus \{v\}$ the
        color 2. We have $\ucn(v) = v_1$, and for any $y \in K \setminus \{v\}$, $\ucn(y) = v$.
        
        \item None of the vertices in $K$ see $v_1$ as a neighbor.
        
        \begin{itemize}
            \item For all $y \in K$, we have $N(y) \cap X = \{v_2\}$.
            
            Assign the color 3 to all the vertices in $K$. All the vertices in $K$
            see $v_2$ as their uniquely colored neighbor.    
            \item There is a vertex $v\in K$ such that $\deg_X(v) = 0$.
        
            Since $G$ is connected, $|K| \geq 2$. Choose a vertex $v' \in K \setminus \{v\}$.
            Assign $C(v) = 1$, $C(v') = 3$, and the vertices (if any) in $K \setminus \{v, 
            v'\}$ the color 2. We have $\ucn(v') = v$, and for $y \in K \setminus \{v'\}$, $\ucn(y) = v'$.
        \end{itemize}
    \end{itemize}
\end{itemize}
\end{proof}

\subsection{The case when $|X|\geq 3$ and $X$ is an independent set}\label{sec:indset}
We start handling the general case of $|X| \geq 3$. In this section, we
prove the upper bound for the case when $X$ is an independent set. 

\begin{theorem}\label{thm:independent}
Let $G(V,E)$ be a graph and $X\subseteq V$ be a set of vertices such that 
$|X|=d\geq 3$ and $G[V\setminus X]$ is a disjoint union of cliques. 
If $X$ is an independent set, then $\chicf{} \leq d+1$. 
\end{theorem}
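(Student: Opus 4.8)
The plan is to color each $v_i \in X$ with its own color $i \in [d]$, keep color $d{+}1$ as an auxiliary, and run the two-phase scheme described in the overview: an initial phase that fixes a \emph{free color} $f \in [d]$ (a color that will never be the unique color of any $v_i$) together with uniquely colored neighbors for the relevant $v_i$, followed by the completion phase of Lemma~\ref{lem:isolated_in_X}, which finishes every remaining clique using $f$ as a safe background color. Since $X$ is independent, each $v_i$ must draw its uniquely colored neighbor from a clique of $G[V\setminus X]$, and, as $G$ is connected, every clique of $G[V\setminus X]$ has at least one vertex adjacent to $X$.

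The heart of the argument is producing the free color. Here I would use a slot-counting principle: the $d$ vertices of $X$ occupy $d$ unique-color ``slots'', and a color of $[d]$ can be freed as soon as we force two of these slots to coincide or push one of them onto color $d{+}1$. First I would split on the adjacency pattern between $V\setminus X$ and $X$. If some $v \in V\setminus X$ has $\deg_X(v)\ge 2$, say $v_i,v_j\in N(v)$, I color $v$ with $d{+}1$ and arrange that $d{+}1$ occurs exactly once in each of $N(v_i)$ and $N(v_j)$, so that $v$ is a shared uniquely colored neighbor of both $v_i$ and $v_j$, collapsing two slots onto $d{+}1$ and freeing a color of $[d]$. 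Otherwise every vertex of $V\setminus X$ has $\deg_X\le 1$, the neighborhoods of distinct $v_i$ are vertex-disjoint, and I would free a color directly by coloring one suitable clique so that two chosen $v_i$ receive the same unique color (or one receives $d{+}1$), mirroring the independent subcase of Lemma~\ref{lem:xeq2}.

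With $f$ fixed, the completion phase colors each remaining clique $K$ by flooding the bulk of $K$ with $f$ and spending one or two of the colors in $([d{+}1]\setminus\{f\})$ on a distinguished vertex (or pair) of $K$: the distinguished vertex becomes the uniquely colored neighbor of every other vertex of $K$, while it obtains its own uniquely colored neighbor either from $X$ (if it has a neighbor there) or from the second distinguished color (needed when $K$ contains a vertex with $\deg_X = 0$). Because $f$ is free, flooding it never creates a fresh unique color in any $N(v_i)$, provided I maintain the invariant that every $v_i$ sees color $f$ either zero or at least twice.

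The main obstacle I anticipate is exactly this global control of the free and auxiliary colors. Unlike the $|X|\le 2$ warm-ups, a single high-degree vertex adjacent to many members of $X$ (as in the clique $\widehat{K}$ of the lower bound) can force numerous $v_i$ to see the same colors repeatedly, so pinning color $d{+}1$ (or a shared color) as a genuine, non-duplicated uniquely colored neighbor for one pair — while simultaneously guaranteeing that every clique vertex, including those with no neighbor in $X$, gets its own uniquely colored neighbor — requires the coloring rules feeding Lemma~\ref{lem:isolated_in_X} to be stated with care, and is why the detailed case analysis cannot be avoided.
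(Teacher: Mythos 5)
Your overall architecture (distinct colors on $X$, an initial phase that pins down a free color, then the completion phase of Lemma~\ref{lem:isolated_in_X}) matches the paper, but the central mechanism you propose for the case $\deg_X(v)\geq 2$ does not work as stated. You color $v$ with $d+1$ and make $d+1$ the shared unique color of $v_i$ and $v_j$. That reliance cannot survive the completion phase: Lemma~\ref{lem:isolated_in_X}'s proof uses $d+1$ (together with $f$) as a flooding color --- every uncolored singleton clique receives $d+1$, and nearly every clique gets a distinguished vertex colored $d+1$ --- so as soon as $v_i$ has a neighbor in any other clique, that neighbor may later be colored $d+1$ and destroy the uniqueness you arranged. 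Your caveat ``arrange that $d+1$ occurs exactly once in each of $N(v_i)$ and $N(v_j)$'' is a global constraint spanning all cliques adjacent to $v_i$ and $v_j$, which the local clique-by-clique completion cannot honor. This is precisely why the paper instead assigns $C(v)=i_1$, an \emph{index} color of one of $v$'s neighbors in $X$: all of $N(v)\cap X$ then rely on the color $i_1\in[d]$, the color $i_2$ becomes free, and reliances on index colors are protected by Rule~(vi) and the structure of the completion. (The paper permits reliance on $d+1$ only as a locally justified exception, when the relying vertex's entire neighborhood sits inside one fully colored clique.) A second concrete omission in this case: you never address vertices $v_\ell\in X$ whose \emph{entire} neighborhood lies inside the one clique $K$ you color in the initial phase (the set $S_K$ of Lemma~\ref{lem:D2C_mod_indep_case2}). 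If you flood $K$ with $f$ and $d+1$, such a $v_\ell$ can never again acquire a uniquely colored neighbor, and Rule~(iv) forbids handing it to the completion phase in that state; most of the case analysis of Lemma~\ref{lem:D2C_mod_indep_case2} exists exactly to handle $S_K\neq\emptyset$.

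The other branch, where every vertex of $V\setminus X$ has $\deg_X\leq 1$, is also gappier than you suggest. Mirroring the independent subcase of Lemma~\ref{lem:xeq2} --- coloring one clique so that two vertices of $X$ share a unique color --- collides with Rule~(vi): any index color $c$ you place on $V\setminus X$ forces $v_c$ to already have a uniquely colored neighbor, and with $\deg_X\leq 1$ no single vertex can serve two members of $X$, so you need two distinct carriers of the color $c$ inside one clique; but then a third clique vertex with no neighbor in $X$ may see that color twice and nothing else, leaving it with no uniquely colored neighbor (a genuine failure mode, e.g.\ a $3$-vertex clique whose third vertex has empty $X$-neighborhood). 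The paper avoids this thicket entirely: in this case (Lemma~\ref{lem:D2C_mod_indep_case1}) it abandons the distinct-colors setup, colors \emph{every} vertex of $X$ with the single color $d+1$, gives each $v_i$ a private neighbor colored $i$, and finishes with a self-contained completion exploiting $\deg_X\leq 1$ --- notably without invoking Lemma~\ref{lem:isolated_in_X} at all, since Rule~(i) is violated by that coloring. So while your two-case decomposition is the right one, both cases are missing the ideas that make the argument go through.
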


In order to show the above theorem, we first prove 
Lemma \ref{lem:D2C_mod_indep_case1}, where we handle the case when every vertex
in $V \setminus X$ has at most one neighbor in $X$. After this, we 
prove Lemma \ref{lem:D2C_mod_indep_case2}, where there is a vertex $v \in V\setminus X$
that has at least two neighbors in $X$. The proof of Lemma \ref{lem:D2C_mod_indep_case2}
uses Lemma \ref{lem:isolated_in_X}, which also serves as the completion phase for all the remaining cases (including those where $X$ is not an independent set).

\begin{lemma}\label{lem:D2C_mod_indep_case1}
Let $G=(V,E)$ be a graph and $X\subseteq V$ be a set of vertices such that 
$|X|=d\geq 3$ and $G[V\setminus X]$ is a disjoint union of cliques. 
If $X$ is an independent set and 
every vertex 
in $V\setminus X$ has at most one neighbor in $X$,
then $\chicf{} \leq d+1$. 

\end{lemma}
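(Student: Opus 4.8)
The plan is to construct a \cf{} coloring $C : V \to [d+1]$ directly, following the two-phase structure outlined in the overview. Since $X = \{v_1, \ldots, v_d\}$ is an independent set, I would begin by assigning each $v_i$ the distinct color $i$ from $[d]$, reserving the color $d+1$ as a spare. The defining hypothesis of this lemma is that every vertex in $V \setminus X$ has $\deg_X(\cdot) \le 1$; this is a strong structural restriction, because it means each clique $K$ in $G[V\setminus X]$ interacts with $X$ only through vertices that each touch at most a single $v_i$. The immediate consequence I would exploit is that the vertices of $X$ must find their uniquely colored neighbors either inside $X$ (impossible here, as $X$ is independent) or within the cliques of $G[V\setminus X]$ — so every $v_i$ needs a privately controlled neighbor in some clique.

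\textbf{Initial phase: securing a free color and the neighbors of $X$.} The key deliverable of the initial phase, per the overview, is to identify a \emph{free color} $f \in [d]$ that never serves as the unique color in the neighborhood of any vertex of $X$. My plan is to first give each $v_i$ a uniquely colored neighbor. Because $G$ is connected and has no isolated vertices, each $v_i$ has at least one neighbor $w_i$ in some clique with $w_i \notin X$; under the $\deg_X \le 1$ hypothesis, $N(w_i)\cap X = \{v_i\}$, so $w_i$ "belongs" privately to $v_i$. I would color a small number of such clique-vertices with color $d+1$ (or with carefully chosen colors) so that $v_i$ sees $\ucn(v_i) = w_i$ with a color appearing exactly once in $N(v_i)$. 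The subtlety is coordinating these choices across all $d$ vertices simultaneously while freeing up one color: the cleanest route is to designate $f = d+1$ as free by ensuring no $v_i$ ever relies on color $d+1$ as its unique color, and then using $d+1$ liberally to "fill" clique interiors in the completion phase. I expect I can arrange the initial phase so that the uniquely colored neighbor of each $v_i$ is an $f$-avoiding color, leaving $f$ completely unused as a witness for $X$.

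\textbf{Completion phase: coloring the cliques.} With the free color $f$ in hand, I would invoke the machinery of Lemma \ref{lem:isolated_in_X} (stated later, which I am permitted to assume) as the completion phase. Each still-uncolored clique $K$ is colored in isolation. The cases split on how $K$ meets $X$: if some vertex of $K$ sees a $v_i$ that already has its unique neighbor fixed, I can safely use that structure; otherwise I rely on the free color $f$ to color the "bulk" of $K$ and pick one or two distinguished vertices to act as uniquely colored neighbors for the rest of $K$. Crucially, a clique on at least two vertices can always be \cf{}-colored internally using at most three colors (one vertex gets a private color, a second vertex gets another, the rest get a third), and the presence of $X$-neighbors only makes this easier by supplying external witnesses. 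The $\deg_X \le 1$ assumption guarantees no vertex of $K$ is "overconstrained" by multiple $X$-neighbors, so the witness assignments never conflict.

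\textbf{The main obstacle.} I expect the hardest part to be the \emph{simultaneous} satisfaction of the $X$-vertices together with the free-color guarantee, rather than the per-clique coloring. Specifically, when several $v_i$'s draw their uniquely colored neighbors from the \emph{same} clique $K$, I must ensure the colors I place there are genuinely unique within each relevant neighborhood — a single vertex of $K$ colored $c$ must not accidentally destroy the uniqueness witness of two different $X$-vertices, and I must still leave a color free globally. Under the current hypothesis this is manageable because each clique vertex sees at most one $v_i$, so the neighbor-assignment graph between $X$ and the cliques is essentially a collection of stars; I would handle this by processing cliques one at a time and bookkeeping which colors have been "spent" as unique witnesses, deferring the full accounting to the Rules enumerated in Lemma \ref{lem:isolated_in_X}.
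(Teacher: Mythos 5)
Your plan has a genuine gap at its central joint: you propose to run the completion phase by invoking Lemma~\ref{lem:isolated_in_X} with $f = d+1$ designated as the free color, but Rule~(v) of that lemma requires $1 \leq f \leq d$ --- the free color must be the color of a vertex $v_f \in X$ that itself has a uniquely colored neighbor --- so $d+1$ can never play that role. The obvious repair inside your framework also fails: if you color each $v_i$ with $i$ and give it a private witness $w_i$ colored $i$ (the only coordination-free choice compatible with Rule~(vi)), then every witnessed vertex of $X$ relies on its own color, and no $f \in [d]$ satisfies Rule~(v)(b). To satisfy Rule~(v) you must make some $v_f$ rely on a color $c \in [d]\setminus\{f\}$; Rule~(vi) then forces $v_c$ to be witnessed as well, and Rule~(ii) forces every clique you touch to be colored in its entirety with all of its vertices witnessed. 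Producing such a configuration is exactly the hard part of this lemma, and it is hard \emph{because of} the hypothesis $\deg_X(\cdot) \leq 1$: the trick used in Lemma~\ref{lem:D2C_mod_indep_case2} (a clique vertex adjacent to $v_{i_1}$ and $v_{i_2}$, colored $i_1$, witnessing both and thereby freeing $i_2$) is precisely what this hypothesis forbids. Concretely, take a clique $K=\{u_1,u_2\}$ with $N(u_1)\cap X=\{v_1\}$ and $N(u_2)\cap X=\{v_2\}$: the assignment $(C(u_1),C(u_2))=(2,2)$ leaves $u_2$ with no witness, $(2,1)$ leaves $u_1$ with no witness (it sees color $1$ on both $v_1$ and $u_2$), $(1,2)$ leaves no valid $f$, coloring either vertex $d+1$ makes an $X$-vertex rely on $d+1$ (which the completion phase of Lemma~\ref{lem:isolated_in_X} does not support, since it fills cliques with $d+1$), and any third color triggers a Rule~(vi) chain into further cliques. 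So the sentence ``I expect I can arrange the initial phase so that \ldots'' is exactly where the proof is missing, and it is not a routine arrangement.

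The paper's proof of this lemma takes a different and much simpler route, and in particular does \emph{not} invoke Lemma~\ref{lem:isolated_in_X} at all (only the companion Lemma~\ref{lem:D2C_mod_indep_case2} does). It colors \emph{all} of $X$ with the single color $d+1$, deliberately abandoning the rainbow coloring of $X$, so no free color is ever needed. Each $v_i$ then receives a private witness $w_i$ colored $i$; this is consistent because $\deg_X \leq 1$ guarantees the $w_i$ are distinct and that $w_i$ has no neighbor in $X$ other than $v_i$. The completion is done directly: singleton cliques and clique interiors are filled with $d+1$, and in each remaining clique one or two distinguished vertices receive colors $j,\ell \in [d]$ chosen so that they are \emph{not} adjacent to $v_j$ or $v_\ell$, which protects the witnesses placed in the initial phase. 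The monochromatic coloring of $X$ is the key idea your proposal lacks; with it, the $\deg_X \leq 1$ hypothesis makes every completion case immediate. If you insist on keeping $X$ rainbow-colored and routing through Lemma~\ref{lem:isolated_in_X}, you would have to supply the missing construction of a genuine free color in $[d]$, a task comparable in difficulty to the paper's other initial-phase lemmas (Lemmas~\ref{lem:perfect_matching} and~\ref{lem:iso_perfect_matching}).
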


\begin{proof}
We explain how to assign $C: V  \rightarrow [d+1]$ such that 
$C$ is a CFON coloring of $G$.
Let $X = \{v_1, v_2, \dots, v_d\}$. 

\noindent\textbf{Initial phase:} For each $v_i\in X$, assign $C(v_i)=d+1$. 
        For each $v_i\in X$, choose an arbitrary neighbor $w_i\in V \setminus X$ 
        and assign $C(w_i)=i$. We get that $\ucn(v_i)=w_i$. 
        Now, each vertex in $X$ is colored and has a uniquely colored neighbor. 
      
\noindent\textbf{Completion phase:} Each uncolored singleton clique in $G[V\setminus X]$ is assigned the color 
        $d+1$.  Note that all the singleton cliques have exactly one neighbor in $X$,
        and this neighbor is the uniquely colored neighbor. 
        What remains to be addressed are cliques of size at least 2.

\begin{itemize}        
        \item \textbf{Clique $K \subseteq G[V\setminus X]$ with at least two colored vertices.} 
        Color the uncolored vertices with the color $d+1$. 
        Let $v,v' \in K$ be two of the vertices that were colored prior 
        to this step. Hence it follows that $C(v), C(v') \in [d]$ and 
        $C(v) \neq C(v')$.
        
        Since $\deg_X(y)\leq 1$ for all $y \in G[V \setminus X]$, one of 
        $v$ and $v'$ will be the uniquely colored neighbor 
        of all vertices in $K$.

        \item \textbf{Clique $K \subseteq G[V\setminus X]$ with exactly 1 colored vertex $v$}. Let $C(v)=j$ and hence $v_j\in N(v)$.   
        \begin{itemize}
                  \item If $|K|=2$. Let $K=\{v,v'\}$. 
                  
                  \begin{itemize}
                      \item If 
                      $v_j\notin N(v')$, 
                      we assign $C(v')=j$. 
                  We get that $\ucn(v)=v'$ and $\ucn(v')=v$. 
                  \item Else, we have $N(v)\cap X = N(v')\cap X = \{v_j\}$. 
                  We assign $C(v')$ arbitrarily from $[d]\setminus \{j\}$. 
                  We get that $\ucn(v)=v'$ and $\ucn(v')=v$. 
                  \end{itemize}

            \item Else if $|K|\geq 3$.

            Let $v'$ be arbitrarily chosen from $K \setminus \{v\}$. 
            Since $|X| \geq 3$ and $\deg_X(v')\leq 1$, there exists a vertex $v_{\ell} \in X$, $v_{\ell} \neq v_j$ such that $v_{\ell} \notin
            N(v') \cap X$. 
            Assign $C(v') = \ell$ and 
            color the rest of the vertices in $K\setminus \{v,v'\}$
            with the color $d+1$. 
            We get that for all vertices $w\in K$, either $v$ or $v'$ is a uniquely colored neighbor. 
            
            %\bscomment{Invariant 1 violation}
            \end{itemize}   
            
        \item \textbf{Clique $K \subseteq G[V\setminus X]$ with no colored vertices}. We first select two vertices $v, v' \in K$.
        
        Since $\deg_X(v)\leq 1$, we can choose $j \in [d]$ such that
        $v_j \notin N(v) \cap X$. Since $deg_X(v')\leq 1$ and $|X| \geq 3$,
        we can choose $\ell \in [d]$ such that $\ell \neq j$ and $v_{\ell} 
        \notin  N(v') \cap X$. 
        
        Assign $C(v) = j$, $C(v') = \ell$ and the rest of the vertices in 
        $K\setminus \{v,v'\}$ the color $d+1$. For all vertices $w\in K$, either $v$ or $v'$ is a uniquely colored neighbor. 
\end{itemize}
\end{proof}

%\begin{mybox}{Invariants ?}
%\begin{itemize}
%    \item For each vertex $v_i\in X$, how many times does the color $i$ (not the free color) repeat in 
%    $V\setminus X$. 
%\end{itemize}
%\end{mybox}
We first state Lemma \ref{lem:isolated_in_X}, which will serve as the completion phase
for almost all the remaining cases (even for those where $X$ is not an independent set).
This lemma states that the graph can be \cf{} colored provided it has been  
partially colored satisfying certain rules. 

\begin{lemma}\label{lem:isolated_in_X}
Let $G = (V,E)$ be a graph with $X = \{v_1, v_2, \ldots, v_d\} \subseteq V$ such that $d\geq 3$ 
and $G[V \setminus X]$ is a disjoint union of cliques. Further,
$Y = \{v_i \in X: \deg_X(v_i) \geq 1\}$ and $C: V \rightarrow [d+1]$ 
be a partial coloring that satisfies the below rules.

Then, $C$ can be extended to
a CFON coloring $\widehat C: V \rightarrow [d+1]$ of all the vertices in $V$.
\end{lemma}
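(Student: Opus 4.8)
The plan is to read this lemma as the promised completion phase, so I assume the unstated rules guarantee the structure described in the overview: every $v_i \in X$ is already colored with its index $i$, a \emph{free color} $f \in [d]$ has been fixed so that $f$ is never the unique color in the neighborhood of any vertex of $X$, each $v_i \in Y$ already has a fixed uniquely colored neighbor $\ucn(v_i)$, and any partially colored clique carries only a bounded number of distinctly colored ``anchor'' vertices. Under these hypotheses I would extend $C$ to $\widehat C$ in two stages, processing the cliques of $G[V \setminus X]$ one at a time.

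First I would secure a uniquely colored neighbor for each vertex of $X$ that still lacks one, namely the isolated vertices $v_i \in X \setminus Y$ not handled in the initial phase. Since $G$ has no isolated vertices, each such $v_i$ has a neighbor inside some clique $K$; I would nominate one such neighbor $w_i$ and color it so that its color occurs exactly once in $N(v_i)$, the natural choice being $C(w_i)=i$. The subtlety is that a single clique may be adjacent to several of these isolated vertices at once, so I would assign these duties clique-by-clique and check that one clique can simultaneously serve all the isolated vertices it touches; here $d \geq 3$ supplies enough spare colors to avoid collisions.

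Second I would color each clique $K$ completely, branching on the number of its already-colored vertices (at least two, exactly one, or none), mirroring the case split already used in Lemma \ref{lem:D2C_mod_indep_case1} and Lemma \ref{lem:xeq2}. The workhorse is the free color $f$: because $f$ is never a unique color for any vertex of $X$, I may paint arbitrarily many vertices of $K$ with $f$ without disturbing any uniquely colored neighbor already secured for $X$. In a clique of size at least $2$ I would reserve two vertices to carry two distinct non-$f$ colors; since $K$ is complete, one of them is the uniquely colored neighbor of the rest of $K$ and the two mutually serve each other, while the remaining vertices take color $f$. When $K$ must also feed a uniquely colored neighbor to an adjacent vertex of $X$, I would pick the reserved colors to meet that demand, using $\deg_X(\cdot) \le d-1$ together with $d \geq 3$ to guarantee that an admissible index always exists.

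The main obstacle I anticipate is the interaction between the two stages inside a single clique: coloring $w_i$ with color $i$ to rescue an isolated $v_i$ must not destroy $w_i$'s own uniquely colored neighbor, nor the uniquely colored neighbors of the other clique vertices, and two isolated vertices of $X$ sharing one clique must not force conflicting colorings of the same anchor. I would resolve this by careful bookkeeping: reserve only as many distinctly colored anchors per clique as the simultaneous obligations require, fill the rest with $f$, and verify in each subcase that every vertex, in $X$ and in the clique alike, retains a uniquely colored neighbor. Confirming that the single free color $f$ remains consistent across all these simultaneous assignments is where the bulk of the careful, if largely routine, case checking will lie.
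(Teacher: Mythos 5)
Your two-stage plan is structurally the same as the paper's proof: first, for every $v_i \in X\setminus Y$ still lacking a uniquely colored neighbor, color an uncolored neighbor $w_i$ with the color $i$ (the paper justifies exactly this step using rules (iv) and (vi)); then finish the cliques of $G[V\setminus X]$ one by one, splitting on how many of their vertices are already colored and using the free color $f$ as filler. That much is sound and matches the paper.

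The genuine gap is in your workhorse step for completing a clique. You claim that reserving two vertices of $K$ with two distinct non-$f$ colors makes one of them a uniquely colored neighbor for every other vertex of $K$, and that an admissible choice of colors always exists because $\deg_X(\cdot)\le d-1$. Both claims are false. A vertex of $V\setminus X$ may be adjacent to \emph{all} $d$ vertices of $X$ (the paper devotes Case 1 of Lemma \ref{lem:perfect_matching} to exactly the situation $\deg_X(v)=|X|$), so no admissible index in $[d]$ need exist. Moreover, if your two anchors carry colors $a,b\in[d]$, then a vertex $y\in K$ that is adjacent to both $v_a$ and $v_b$, and to no other vertex of $X$ outside $\{v_f\}$, sees $a$ twice, $b$ twice, and $f$ many times: it is left with no uniquely colored neighbor. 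The missing idea, which the paper uses in essentially every case of its completion phase, is the special role of color $d+1$: since no vertex of $X$ carries $d+1$, a \emph{single} vertex of $K$ colored $d+1$ is automatically a uniquely colored neighbor of every other vertex of $K$, irrespective of their $X$-neighborhoods. The case analysis then reduces to (i) finding a uniquely colored neighbor for that one $d+1$-colored vertex, and (ii) preserving the uniquely colored neighbors of the vertices $v_j\in X\setminus Y$ that are served by the stage-1 anchors sitting inside $K$. Point (ii) is the other place your sketch is too thin: the paper's Cases 2--4 must sometimes \emph{recolor} those anchors (for instance when $N(v)\cap X=N(v')\cap X=\{v_j\}$, or when one anchor serves two vertices of $X\setminus Y$), and it even tolerates a documented exception in which a vertex of $X\setminus Y$ ends up relying on color $d+1$; ``reserve anchors and fill the rest with $f$'' does not by itself produce these reassignments, and this is precisely where the bulk of the paper's proof lives.
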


\begin{mybox}{Rules}
\begin{enumerate}[topsep=0pt,itemsep=-0.5ex,partopsep=1ex,parsep=1ex, label=(\roman*)]
    \item For all $v_i\in X$, $C(v_i)=i$.

    \item For some number of cliques $K$ in $G[V \setminus X]$, 
    all the vertices in $K$ are colored using colors from $[d+1]$. The remaining cliques are uncolored.
    %When this happens, each vertex in $K$ has a uniquely colored neighbor. 
  
  \item All the vertices in 
   $Y$ and all the colored vertices in $V\setminus X$ have a uniquely colored neighbor. 
   Moreover, some vertices in $X \setminus Y$ have a uniquely colored neighbor.
   
   \item The uniquely colored neighbor is identified for all the vertices in $X$ whose 
   entire neighborhood is colored. 
   
   %If the uniquely colored neighbor has not been identified for a vertex $v_i \in X$, then 
   %there is an uncolored vertex in $N(v_i)$.
    
    \item There exists $1\leq f \leq d$, such that (a) $v_f \in X$ has a uniquely colored
    neighbor and (b) for  each vertex in $X$, the color $f$ is not the unique color in its neighborhood. 
    We refer to $f$ as the \emph{free color}.
    
    \item If a vertex $v_i\in X\setminus Y$ does not have a uniquely colored neighbor, 
    then the color $i$ is not assigned to 
    any vertex in $V\setminus X$ yet. 
% 
%    \item Each vertex in $X\setminus Y$ whose all the neighbors are colored, 
%    has a uniquely colored neighbor. 
    
\end{enumerate}
\end{mybox}

Before proving Lemma \ref{lem:isolated_in_X}, we prove the upper bound when 
$X$ is an independent set and there is a vertex $v \in V \setminus X$ that has at
least two neighbors in $X$. 

\begin{lemma}\label{lem:D2C_mod_indep_case2}
Let $G(V,E)$ be a graph and $X\subseteq V$ be a set of vertices such that 
$|X|=d\geq 3$ and $G[V\setminus X]$ is a disjoint union of cliques. 
If $X$ is an independent set and 
there exists a vertex $v\in V\setminus X$, such that 
$\deg_X(v)\geq 2$, then $\chicf{} \leq d+1$. 

\end{lemma}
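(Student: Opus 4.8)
The plan is to reduce everything to the completion phase packaged in Lemma~\ref{lem:isolated_in_X}: I will produce an \emph{initial phase} partial coloring $C\colon V\to[d+1]$ that satisfies Rules (i)--(vi) of that lemma, and then invoke it to extend $C$ to a full CFON coloring. Following Rule~(i), I first set $C(v_k)=k$ for every $v_k\in X$. Let $v\in V\setminus X$ be the given vertex with $\deg_X(v)\ge 2$, let $K$ be the clique of $G[V\setminus X]$ containing $v$, and fix two neighbors $v_a,v_b\in N(v)\cap X$ with $a\neq b$. The core idea, borrowed from Case~1 of the $|X|=2$ analysis (Lemma~\ref{lem:xeq2}), is to use the single vertex $v$ to serve simultaneously as the uniquely colored neighbor of both $v_a$ and $v_b$, thereby \emph{merging} their witnesses onto one color and freeing the other.

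Concretely, I color $C(v)=a$. Because $X$ is an independent set, the only neighbors of $v_a$ and $v_b$ that ever get colored lie in $V\setminus X$; keeping color $a$ off every other neighbor of $v_a$ and $v_b$ makes $a$ the unique color in both $N(v_a)$ and $N(v_b)$, so $\ucn(v_a)=\ucn(v_b)=v$. I then designate $b$ as the free color of Rule~(v): $v_b$ is witnessed (so part~(a) holds), and as long as no vertex of $X$ ever sees $b$ as a unique color, part~(b) holds as well.

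If $|K|=1$, the initial phase is already complete and clean: the only colored vertex of $V\setminus X$ is $v$, colored $a$, and I set $\ucn(v)=v_b$ (any $X$-neighbor works, since all carry distinct colors). Color $b$ is used nowhere in $V\setminus X$, hence it is absent from every $X$-neighborhood and is trivially free. One checks directly that Rules (i)--(vi) hold --- in particular Rule~(vi) is vacuous, since the only color of $[d]$ used on $V\setminus X$ is $a$ and $v_a$ is witnessed --- and Lemma~\ref{lem:isolated_in_X} finishes the job.

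The substantive case is $|K|\ge 2$, where Rule~(ii) forces me to color \emph{all} of $K$ in the initial phase, and this is where I expect the main difficulty. The constraints interlock tightly: I must (a) avoid color $a$ on every other vertex of $K$ adjacent to $v_a$ or $v_b$, so that $a$ stays unique in $N(v_a)$ and $N(v_b)$; (b) never let any $X$-vertex see color $b$ \emph{uniquely}, so that $b$ remains free; and (c) respect Rule~(vi), which forbids placing a color $k\in[d]$ on $V\setminus X$ unless $v_k$ is already witnessed --- and so far only $v_a$ and $v_b$ are. Together these restrict the usable palette on $K\setminus\{v\}$ to essentially $\{a,b,d+1\}$, while I still must hand every vertex of $K$ a uniquely colored neighbor. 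A naive ``two special colors plus a $d+1$ filler'' coloring breaks down for a large clique containing a vertex whose \emph{only} $X$-neighbor is $v_a$ (it sees $a$ twice and the filler repeatedly, with no unique color left). I therefore anticipate a sub-case analysis on the $X$-adjacency pattern inside $K$ --- for instance, coloring such problematic vertices in pairs with $b$ so that they witness each other while keeping $b$ non-unique in $N(v_a)$ --- consistent with the proliferation of cases flagged in the proof overview. Once $K$ is colored so that Rules (i)--(vi) are verified, Lemma~\ref{lem:isolated_in_X} completes the coloring and yields $\chicf{}\le d+1$.
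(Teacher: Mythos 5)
Your overall frame is the same as the paper's: color the vertices of $X$ with distinct colors, use the vertex $v$ with $\deg_X(v)\geq 2$ as a common uniquely colored neighbor for two of its $X$-neighbors so that one of their colors becomes free, verify the Rules, and hand off to Lemma~\ref{lem:isolated_in_X}. Your $|K|=1$ case is correct and matches the paper's singleton-clique case. But for $|K|\geq 2$ --- the only substantive case --- you stop at ``I anticipate a sub-case analysis,'' which is not a proof, and the analysis you gesture at would not go through as described. The root problem is your reading of Rule~(vi): it does not forbid placing a color $j\in[d]$ on $V\setminus X$ unless $v_j$ is \emph{already} witnessed; it only requires that, in the resulting partial coloring, any $v_j$ whose color has been used does have a uniquely colored neighbor. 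The paper exploits precisely this: it defines $S_K = \{v_i \in X : N(v_i) \subseteq K\}\setminus N(v)$, the $X$-vertices whose entire neighborhood lies inside $K$ and which are not adjacent to $v$, and for each $v_j\in S_K$ it colors a chosen neighbor $w_j\in N(v_j)\cap K$ with the color $j$ --- one move that simultaneously witnesses $v_j$ (as Rule~(iv) demands, since $K$ must be fully colored by Rule~(ii)) and satisfies Rule~(vi). Your restriction of the palette on $K\setminus\{v\}$ to $\{a,b,d+1\}$ rules out exactly this move, and without it the vertices of $S_K$ are fatal: such a $v_j$ never sees color $a$ (it is not adjacent to $v$), and a neighborhood filled with $b$'s and $(d+1)$'s can easily contain each of those colors twice, leaving $v_j$ with no unique color; since $N(v_j)$ is then entirely colored, no later stage can repair this. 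Your ``pair the problematic vertices with color $b$'' idea does not fix it either: if $v_j$ sees both members of a $b$-pair it still has no unique color, and if it sees exactly one it is forced to rely on the free color $b$, which Rule~(v)(b) is designed to prevent.

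Beyond $S_K$, the paper's proof also needs a reassignment move that your sketch lacks: after coloring the $S_K$-witnesses, there may remain an uncolored vertex $v'\in K$ whose only colored-neighbor pattern is $N(v')\cap N(v)\cap X=\{v_a\}$ (your ``sees $a$ twice'' vertex). The paper resolves this by \emph{recoloring} $v$ from $a$ to $b$, assigning $v'$ the color $d+1$, redesignating $a$ as the free color, and filling the rest of $K$ with $a$; a similar swap handles the subcase ($S_K=\emptyset$, every vertex of $K$ adjacent to $v_a$). These swaps, together with the $S_K$ mechanism, constitute the actual combinatorial content of the lemma, and they are what is missing from your proposal.
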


\begin{proof}
The goal here is to partially color some vertices of $G$
so that the rules of Lemma \ref{lem:isolated_in_X} are satisfied.
%and identify a color from $\{1, 2, \ldots, d\}$
%that is not used as the unique color of any of the vertices in $X$. We will
%refer to this color as the \emph{free color}. Once the free color is identified, 
We then use Lemma \ref{lem:isolated_in_X} to extend the partial coloring and obtain a CFON coloring of $G$.

Let $X = \{v_1, v_2, \dots, v_d\}$. 
We explain how to assign $C: V  \rightarrow [d+1]$ such that 
$C$ is a partial coloring that satisfies the rules of Lemma \ref{lem:isolated_in_X}.
For each vertex $v_i\in X$, we assign a distinct color $C(v_i)=i$. 
There are two cases depending on the neighborhood of 
vertices in $V\setminus X$.

\begin{itemize}
   \item \textbf{There exists a singleton clique $K=\{v\}$ such that 
    $\deg_X(v)\geq 2$}. 
   
   Let $N(v)\cap X = \{ v_{i_1}, v_{i_2}, \dots , v_{i_m} \}$, with $m\geq 2$. 
    We assign $C(v)=i_1$. 
    
    We get that $\ucn(v)=v_{i_1}$ and for all $1 \leq j \leq m$, $\ucn(v_{i_j})=v$. 
    %The color $i_1$ acts as the 
    %    unique color for the vertices in $N[v]$. 
    The color $i_2$ will not be the unique color of any 
        vertex in $X$ and will be the free color.

    \item \textbf{All singleton cliques have degree equal to 1.}
     By assumption, there is 
      a vertex $v\in V\setminus X$, such that 
$\deg_X(v)\geq 2$. It follows that $v \in K$ where 
     $K$ is a clique in  $G[V \setminus X]$ and 
    $|K|\geq 2$. 
    Let $N(v)\cap X = \{v_{i_1}, v_{i_2}, \dots , v_{i_m}\}$, with $m\geq 2$.

    Now we have two cases depending on whether there exists vertices in $X$ 
    whose neighbors belong to only $K$. 
    We refer to these vertices as $S_K$. Formally, $S_K = \{
    v_i \in X : N(v_i) \subseteq K\} \setminus N(v)$. 
    The vertices in $N(v) \cap X$ rely on $v$ for their uniquely colored 
    neighbor and hence does not require special attention.

  \begin{itemize}
 %     \item $S_K\setminus \{v_{i_1}, v_{i_2}, \cdots, v_{i_m}\}\neq \emptyset$. 
      \item $S_K \neq \emptyset$. 

    First, we assign $C(v)=i_1$   and choose $i_2$ as the free color.     
   For each vertex $v_j\in S_K$, we choose a vertex $w_j \in N(v_j)$. 
   Note that by the definition of $S_K$, it follows that $w_j \in K$.
   We assign $C(w_j)=j$ if it is not already assigned ($w_j$ could have been the 
   chosen neighbor for some other vertex $v_{j'}$ as well). Now all the
   vertices in  $S_K$ have a uniquely colored neighbor.
   
   %We assign $C(v)=i_1$ and choose $i_2$ as the free color. 
  \begin{itemize}
      \item If all vertices in $K$ are colored because of the above coloring, 
      every vertex in 
      $K$ has a uniquely colored neighbor. 
      %The vertex $v$ sees $v_{i_1}$ as its
      %uniquely colored neighbor. 
      We get that $\ucn(v) = v_{i_1}$.
      Each vertex $w\in K\setminus \{v\}$ is assigned a distinct color, say $j$, because it is adjacent to 
      $v_j \in S_K$, which serves as its uniquely colored neighbor.
   
      %there are no uncolored vertices at the end, 
      %we have a uniquely colored neighbor for each vertex in $K$ and $S_K$. 
   
   \item There exists at least one uncolored vertex $K$.  
   \begin{itemize}
       \item  If there exists a uniquely colored neighbor for each uncolored vertex
    in $K$, assign the color $d+1$ to all the uncolored vertices.
    We get that $\ucn(v) = v_{i_1}$. The vertices $w_j \in K \setminus \{v\}$
    rely on the corresponding $v_j$'s as mentioned above.
  %    Each vertex $w_j\in K\setminus \{v\}$ is assigned a distinct color, say $j(\neq d+1)$, because it is adjacent to 
  %    $v_j \in S_K$, which serves as its uniquely colored neighbor.
   
   %and we have a uniquely colored neighbor for each vertex. 
   
   \item Else, let $v' \in K$ be an uncolored vertex that does not 
    see a uniquely colored neighbor. 
    This implies that 
    $N(v') \cap \{v_{i_1}, v_{i_2}, \cdots, v_{i_m}\}  = \{v_{i_1}\}$.
    %$v'$ is adjacent to only $v_{i_1}$
    %among the set $\{v_{i_1}, v_{i_2}, \cdots, v_{i_m}\}$. 
    We reassign $C(v)=i_2$, assign $C(v')=d+1$, and 
    designate $i_1$ as the free color instead of $i_2$.
    We assign the color $i_1$ to the remaining uncolored vertices in $K$.
    We have $\ucn(v')=v$ and $\ucn(w)=v'$, for all 
    $w\in K\setminus \{v'\}$. 
    \end{itemize}

%    \item There exist at least two uncolored vertices in $K$. 
    
%    If there exists a uniquely colored neighbor for each uncolored vertex $w\in 
%K$, assign $C(w)=d+1$. 
%    We get a uniquely colored neighbor for all vertices in $K$.

%    Else, there exists a vertex $v'\in K\setminus \{v\}$ without a uniquely 
%colored neighbor. Also, since $S_K\setminus \{v_{i_1}, v_{i_2}, \cdots, 
%v_{i_m}\}\neq \emptyset$, there exists a $v_j\in S_K\setminus \{v_{i_1}, v_{i_2}, 
%\cdots, v_{i_m}\}$ adjacent to a vertex $w_j\in K\setminus \{v,v'\}$. 
    
%    Since $v'$ does not have a uniquely colored neighbor, 
%    this implies that $\{v_j,v_{i_1}\} \subseteq N(v')$. 
%    We reassign $C(v)=i_2$ and assign $C(v')=d+1$ and the remaining uncolored 
%vertices in $K$ with the color $i_1$. We also have $C(w_j)=j$ assigned
%    already.
%    We designate $i_1$ as the free color in place of $i_2$.
    
%    We get that $\forall w'\in K\setminus \{v'\}$, $\ucn(w')=v'$, 
%    $\ucn(v')=v$, $\ucn(v_j)=w_j$ and $\ucn(x)=v$, for all $x\in N(v)\cap X$. 
  \end{itemize} 
  
  \item $S_K=  \emptyset$. 
%\item $S_K\setminus \{v_{i_1}, v_{i_2}, \cdots, v_{i_m}\} =  \emptyset$. 

This implies that, 
%Else, 
there is no vertex in $X\setminus \{v_{i_1}, v_{i_2}, \cdots, v_{i_m}\}$  that was relying on $K$ for its uniquely colored neighbor. 
We first assign $C(v)=i_1$   and choose $i_2$ as the free color. 
We have the following cases.    
    \begin{itemize}

    \item There exists a vertex $v'\in K\setminus \{v\}$ 
    such that 
     $v_{i_1}\notin N(v')$. 

    Assign $C(v')=d+1$ and assign the remaining vertices of $K\setminus \{v,v'\}$ 
     the color $i_2$.

    For every vertex $w\in K\setminus \{v'\}$, $\ucn(x)=v'$. 
    Finally, we have $\ucn(v')=v$.  
    
    \item 
Else, for every vertex $w \in K$, we have $v_{i_1} \in N(w) \cap X$.

    Reassign $C(v) = i_2$ and assign the color $d+1$ to all the 
    vertices in $K \setminus \{v\}$. 
    The color $i_1$ is the redesignated free color. 
    
    And for each $w\in K$, $\ucn(w)=v_{i_1}$. 
  \end{itemize}

    \end{itemize}
\end{itemize}

Now $C$ is a partial color assignment satisfying all the conditions in the 
rules of Lemma \ref{lem:isolated_in_X}.
Hence by Lemma \ref{lem:isolated_in_X}, we can extend $C$ to a full CFON coloring
of $G$ that uses at most $d+1$ colors. 
\end{proof}

%Lemmas \ref{lem:D2C_mod_indep_case1} and \ref{lem:D2C_mod_indep_case2}
%together imply the following theorem.
%that when $X$ is an independent set of size at least 3, 
%$|X|+1$ colors are sufficient to \cf{} color $G$ . We formally 
%state it below. 
We conclude this section with the proof of Lemma \ref{lem:isolated_in_X}.

\begin{proof}[Proof of Lemma  \ref{lem:isolated_in_X}]
%Let $dc(G)=d$. 
%That is, there is a set $X\subseteq V$, with $|X|=d$ 
%such that $G[V\setminus X]$ is a disjoint union of cliques. 
For each colored vertex $w\in G$, $\widehat C (w) =C(w)$. 
We explain how to extend $\widehat C: V  \rightarrow [d+1]$ 
%using 
%the precoloring $C$ of $V(G)$ 
to all vertices 
such that 
$\widehat C$ is a CFON coloring of $G$. 
Let $X = \{v_1, v_2, \dots, v_d\}$ and $Y = \{v_i \in X : \mbox{deg}_X (v_i)\geq 1\}$. 

%We have that all vertices in $Y$ and some vertices in $X\setminus Y$ 
%have a uniquely colored neighbor. 
%Also, there is a free 
%color $f$ which means that the vertex $v_f$ is not the 
%uniquely colored neighbor 
%of any vertex in $X$ 
%(as of now) 
%and we maintain this property throughout.  \\

%\begin{frame}
%\noindent

\noindent \textbf{Process to identify uniquely colored neighbors for $X\setminus Y$:}  For every $v_j\in X\setminus Y $, 
that does not have a uniquely colored neighbor, 
%prior to this lemma,  
choose an 
uncolored neighbor of $v_j$, say $w_j\in V\setminus X$ 
%in some clique 
%$K\in G[V\setminus X]$ 
and assign $\widehat  C(w_j)=j$. 
%We have $\ucn(w_j) = v_j$.
Rules (iv) and (vi) of Lemma \ref{lem:isolated_in_X} allow us to do this.
Since $v_j$ does not have a uniquely colored neighbor, rule (iv) implies that 
$v_j$ has an uncolored neighbor, and as per rule (vi), no vertex in 
$V\setminus X$ is assigned the color $j$.

\noindent \textbf{Observation:} It is possible that all the neighbors of $v_j\in X\setminus Y$ may be colored 
by the above coloring process on other vertices $v_i \in X \setminus Y$
even before applying the process on $v_j$.
%while assigning uniquely colored neighbors for
%other vertices $v_i \in X \setminus Y$. 
In such a case, we choose an arbitrary neighbor of $v_j$
that was already colored by this process and 
assign it as the uniquely colored neighbor for $v_j$. 
This neighbor 
%This means that a vertex in $K$ 
acts as the  uniquely colored neighbor 
for at least 2 vertices in $X\setminus Y$. 
This fact will be useful later. 
%when  $|K|\geq 2$ and has at least 2 colored vertices. 
    
Now, every vertex in $X\setminus Y$ has a uniquely colored 
neighbor. We now look at the previously uncolored cliques $K$ in $G[V\setminus X]$. 
For each such clique $K$, we  color $K$ as per the applicable case below.

\begin{comment}
    
\begin{enumerate}
    \item As a result of the above rule, all the neighbors of $v_j\in X\setminus Y$ may be colored even before applying the rule on $v_j$. 
    We choose a neighbor that was colored by this rule and 
    treat it as the uniquely colored neighbor for $v_j$. 
    This means that a vertex in $K$ acts as the  uniquely colored neighbor 
    for at least 2 vertices in $X\setminus Y$. We keep in mind this fact when 
    $|K|\geq 2$ and has at least 2 colored vertices. 
    
    Now, every vertex in $X\setminus Y$ will have a uniquely colored 
    neighbor. 
    
    \item While coloring a clique $K$ which has at least 2 colored vertices, 
    %at least 2 colored vertices 
    (because of the above rule), 
    we keep in mind the vertices that were relying on 
    this clique for their 
    uniquely colored neighbor. 
    Note that this is only for cliques $K$ of size at least 2.

\end{enumerate}
\end{comment}

\noindent
\textbf{Case 1: $K$ has no colored vertices}
 \begin{itemize}[topsep=-0.25pt,itemsep=-0.5ex,partopsep=1ex,parsep=1ex]
            \item $|K|=1$. Let $K = \{w\}$. 
            
            We assign $\widehat C(w) = d+1$. As all the 
            neighbors of $w$ are distinctly colored, we assign one of the neighbors
            as $\ucn(w)$.
            
            \item $|K|\geq 2$. We have two subcases here.
            \begin{itemize}
                \item There exists a vertex $w\in K$, such that 
            $N(w)\cap X= \emptyset$.
            
            %Assign $\widehat C(w)=d+1$. 
            Choose another vertex $w'\in K\setminus \{w\}$
            such that $N(w') \cap X \neq \emptyset$.
            We have two subcases.
            \begin{itemize}
                \item $N(w') \cap X = \{ v_f\}$, where $v_f\in X$ is the vertex that corresponds to the free color $f$.
                
                Assign $\widehat C(w') = d+1$ and $\widehat C(w) = c$, where
                $c \in [d] \setminus \{f\}$, chosen arbitrarily. 
                For all the vertices (if any) $x \in K \setminus \{w, w'\}$, 
                assign $\widehat C(x) = f$.
                
                We have $\ucn(w') = w$ and for all vertices $x\in K\setminus \{w'\}$, we have $\ucn(x)=w'$. 
                
                \item There exists a vertex $v_i \in N(w') \cap X$, where $v_i \neq v_f$.
                
                Assign $\widehat C(w')=d+1$.
                For all the vertices $x \in K \setminus \{w'\}$, assign $\widehat C(x) = f$.
            
                 We have  $\ucn(w') = v_i$ and for all vertices $x\in K\setminus \{w'\}$, we have $\ucn(x)=w'$. 
                
            \end{itemize}
            
            \item For all $w \in K$, $N(w) \cap X \neq \emptyset$. 
            
            Assign all the vertices in $K$ the color $d+1$. 
            For all the vertices in $K$, we assign one of the 
            neighbors in $X$ as the respective uniquely colored neighbor.
            \end{itemize}
            
        \end{itemize}

\noindent
\textbf{Case 2: $K$ has exactly one colored vertex}

WLOG, let $v\in K$ be such that $\widehat C(v)=j$. This implies that 
$v_j\in N(v)\cap X$. 

\begin{itemize}[topsep=-0.25pt,itemsep=-0.5ex,partopsep=1ex,parsep=1ex]
    \item $|K|=1$. 
    
    We have $\ucn(v)=v_j$ and $\ucn(v_j) = v$ (as was already assigned).
    %and $U(v_j)=v$. (already address in respective lemmas)
    \item $|K|=2$. Let $K=\{v,v'\}$. 
    
    \begin{itemize}
        \item $N(v')\cap X=\emptyset$. 
        
        We assign 
        $\widehat C(v')=d+1$. 
        We get that $\ucn(v)=v'$, $\ucn(v')=v$ and $\ucn(v_j)=v$.
        \item  $v'$ has a neighbor other than $v_j$ in $X$. 
        That is,  $\exists v_k\in N(v')\cap X$, with $v_k \neq v_j$. 
        
        We assign $\widehat C(v')=d+1$. 
        We get that $\ucn(v)=v'$, $\ucn(v')=v_k$ and $\ucn(v_j)=v$.
        
        \item $N(v') \cap X = \{v_j\}$.
        \begin{itemize}
            \item There exists a vertex 
            $v_\ell\in X\setminus Y$, $v_{\ell} \neq v_j$ 
            such that $\ucn(v_{\ell}) = v$.
            
            We reassign $\widehat C(v)=\ell$ and $\widehat C(v')=d+1$.
            We have that $\ucn(v)=v'$ and $\ucn(v')=v$.
            Note that $\ucn(v_j)= \ucn(v_\ell)=v$, as before.

        \item No vertex in $X$ other than $v_j$ sees $v$ as its uniquely colored neighbor and 
         $v$ has another neighbor in $X$ besides $v_j$, say $v_k$.

                We reassign $\widehat C(v)=d+1$ and  assign $\widehat C(v')=j$.
                We get that $\ucn(v')=v$,  $\ucn(v)=v_k$ and we reassign $\ucn(v_j)=v'$.

        \item $N(v) \cap X = N(v') \cap X = \{v_j\}$.
        
        We reassign $\widehat C(v)$ to an arbitrarily chosen value from $[d]\setminus\{j, f\}$. 
        Note that such a value exists since $d \geq 3$.
        We assign $\widehat C(v')=d+1$. We get that $\ucn(v)=v'$, 
        $\ucn(v')=v$ and $\ucn(v_j)=v$, as before.
        \end{itemize}

    \end{itemize}
    
    \item $|K|\geq3$. 
    %We choose an uncolored 
    %clique $K$ such that $N(K)\cap X\geq 2$. 
    
    \begin{itemize}
        \item There exists a vertex $v'\in K\setminus \{v\}$ such that 
    $v_j\notin N(v')$.
    
    We assign 
    $\widehat C(v')=d+1$ and the vertices in $K \setminus \{v, v'\}$ are colored with the free color $f$. 
    
    We get that $\ucn(v')=v$ and for all $w\in K\setminus \{v'\}$, $\ucn(w)=v'$.
    As before, $\ucn(v_j)=v$.
    
    \item Every vertex in $K$ is adjacent to $v_j$. 
    \begin{itemize}
        \item There exists a vertex $v'\in K\setminus \{v\}$ such that 
        $(N(v') \cap X) \setminus~\{v_j, v_f\} \neq \emptyset$.
        Let $v_k \in N(v') \cap X$, where $v_k \neq v_j$ and $v_k \neq v_f$.
    
        Assign 
        %$C(v)=j$, 
        $\widehat C(v')=d+1$ and rest of the vertices in $K\setminus \{v,v'\}$ are 
        colored with the free color $f$. 
        We get that $\ucn(v')=v_k$ and for all $w\in K\setminus \{v'\}$, $\ucn(w)=v'$. As before, $\ucn(v_j)=v$.
        
        \item For all $w \in K\setminus \{v\}$, $N(w) \cap X \subseteq \{v_j, v_f\}$.
        
        We choose $k \in [d]\setminus\{j, f\}$ arbitrarily. 
        Note that such a value exists since $d \geq 3$.
        
        Choose two vertices $v', v''$ arbitrarily from $K\setminus \{v\}$ and assign $\widehat C(v')=d+1$, $\widehat C(v'')=k$ 
        and the remaining vertices in $K\setminus \{v,v', v''\}$ are colored with $f$. 
        
        We get that $\ucn(v')=v''$, and for all $w\in K\setminus \{v'\}$, 
        $\ucn(w)=v'$. As before $\ucn(v_j)=v$.
        
    \end{itemize}

    \end{itemize}
    \end{itemize}
    \noindent
    \textbf{Case 3: $K$ has at least two colored vertices and there exists a 
    vertex in $K$ that is a uniquely colored neighbor for 
    at least two vertices in $X\setminus Y$} 
    
    Let $v'\in K$ be such that $\ucn(v_j) = \ucn(v_k) = v'$, where $v_j, v_k \in X \setminus Y$. The vertex $v'$ was colored in the process to identify
    uniquely colored neighbors for vertices in $X\setminus Y$. WLOG, we may assume that
    $v'$ was colored when assigning a unique colored neighbor for $v_j$. That is,
    $\widehat C(v') = j$. This also implies that no vertices in $K$ are colored $k$. 
    There are two cases here.
    
    \begin{itemize}[topsep=-0.25pt,itemsep=-0.5ex,partopsep=1ex,parsep=1ex]
        \item All vertices in $K$ are colored. 
        
        In this case, every vertex in $K$ will have a uniquely colored neighbor.
        This is because every vertex in $K$ would have been assigned a distinct color.
        If $w \in K$ is such that  $\widehat C(w) = \ell$, then $\ucn(w) = v_{\ell} \in X \setminus Y$.

    \item There exists an uncolored vertex $v\in K$. There are two subcases.
    \begin{itemize}
        \item $v$ is adjacent to $v_k$. 
        
        We assign $\widehat C(v)=d+1$ and the remaining uncolored vertices 
        in $K$ (if any) are assigned the free color $f$. 
        
        We get that $\ucn(v)=v_k$, and for all $w\in K\setminus \{v\}$,   $\ucn(w)=v$.
        Note that $\ucn(v_j)= \ucn(v_k) = v'$, as before.   
        \item $v$ is not adjacent to $v_k$. 
        
        Reassign $\widehat C(v')=k$, assign $\widehat C(v)=d+1$ and the remaining uncolored 
        vertices in $K$ (if any) are assigned the free color $f$. 
        
        We get that  $\ucn(v)=v'$, and for all $w\in K\setminus \{v\}$, 
        $\ucn(w)=v$. Note that $\ucn(v_j)= \ucn(v_k) = v'$, as before. 
    \end{itemize}
      \end{itemize}
    
    \noindent \textbf{Case 4: $K$ has at least two colored vertices and every colored vertex in $K$ is the uniquely colored neighbor for exactly one vertex in $X\setminus Y$}
    
    Let 
        $v,v'\in K$ be two colored vertices  
        such that $\widehat C(v)=j$ and $\widehat C(v')=k$. 
        The colors $j$ and $k$ are assigned because they are adjacent to $v_j$ and $v_k$ respectively, where $v_j, v_k \in X\setminus Y$. 
        We have cases depending on the neighborhood of $K$.

        \begin{itemize}[topsep=-0.25pt,itemsep=-0.5ex,partopsep=1ex,parsep=1ex]
            \item There exists a colored vertex in $K$ that is adjacent to both $v_j$
            and $v_k$.
            \begin{itemize}
            \item At least one of $v$ or $v'$ is adjacent to 
            both $v_j$ and $v_k$. %\bscomment{(does not happen)}
           
            WLOG let that vertex be $v$. 
        Reassign $\widehat C(v')=d+1$ and assign the color $f$ to  
        the remaining uncolored vertices (if any).
        
        We have that $\ucn(v')=v_k$, and  for all vertices $w\in K\setminus \{v'\}$, $\ucn(w)=v'$. We reassign $\ucn(v_k) = v$, while $\ucn(v_j) = v$ as before. 
            \item There exists a colored vertex $v'' \in K\setminus \{v,v'\}$  such that 
            $\{v_j,v_k\} \subseteq N(v'')$. 
            
            Let $\widehat C(v'') = \ell$ because it was the chosen neighbor for
            $v_{\ell} \in X \setminus Y$ in the coloring process stated in the beginning
            of this proof.
            We reassign $\widehat C(v)=d+1$ and $\widehat C(v')=f$.
            It is important to note that because of the case definition, this reassignment
            does not affect the uniquely colored neighbors of vertices in $X \setminus (Y \cup \{v_j, v_k\})$.
            
            The remaining uncolored vertices in $K$ (if any) are assigned the color $f$.
            
            We have $\ucn(v)=v_j$, and for every vertex $w\in K\setminus \{v\}$, $\ucn(w)=v$. 
             We reassign $\ucn(v_j)=\ucn(v_k) = v''$, while $\ucn(v_{\ell}) = v''$ as before.
            
            \end{itemize}
        \item There exists an uncolored vertex $v''\in K$ such that $\{v_j,v_k\} 
        \subseteq N(v'')$. 
        %is adjacent to both $v_j$ and $v_k$, 
        
        We reassign $\widehat C(v)=d+1$, 
        $\widehat C(v')=f$ and 
        assign $\widehat C(v'')=k$. 
        The remaining uncolored vertices in $K$ (if any) are assigned the color $f$. 
       
        We have $\ucn(v)=v_j$, and for every vertex $w\in K\setminus \{v\}$, $\ucn(w)=v$. 
        We reassign $\ucn(v_j)=\ucn(v_k) = v''$.
        \item No vertex in $K$ is adjacent to both $v_j$ and $v_k$. 
        
        Assign the color $f$ to the remaining uncolored vertices (if any). 
        Every vertex in $K\setminus\{v,v'\}$ will see either $v$ or $v'$ as its uniquely colored neighbor. 
        Also $\ucn(v)=v_j$ and $\ucn(v')=v_k$, while $\ucn(v_j)=v$ and $\ucn(v_k)=v'$, as before. 
\end{itemize}
\end{proof}

\subsection{The case when $|X|\geq 3$  and $X$ is not an independent set}\label{sec:notindset}

In this section, we prove the upper bound when $|X|\geq 3$ and $X$ is not an independent set.
\begin{theorem}\label{thm:D2Csomeedges}
Let $G = (V,E)$ be a graph and $X\subseteq V$ be a set of vertices such that 
$|X|=d\geq 3$ and $G[V\setminus X]$ is a disjoint union of cliques. 
 If $X$ is not an independent set, then 
 $\chicf{} \leq d+1$.
\end{theorem}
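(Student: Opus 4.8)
The plan is to mirror the two-phase strategy used throughout Section~\ref{sec:upperbound}: assign each $v_i \in X$ the distinct color $i$, run an initial phase that colors $X$ together with a small number of cliques so that the Rules of Lemma~\ref{lem:isolated_in_X} hold, and then invoke Lemma~\ref{lem:isolated_in_X} to complete the coloring with at most $d+1$ colors. Since the completion phase is already encapsulated in Lemma~\ref{lem:isolated_in_X}, the entire burden is to produce a partial coloring meeting Rules (i)--(vi); in particular, the crux is to identify a \emph{free color} $f$ together with uniquely colored neighbors for every vertex of $Y = \{v_i \in X : \deg_X(v_i) \geq 1\}$.

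Because $X$ is not independent, $G[X]$ contains at least one edge, so the maximum degree of $G[X]$ is either at least $2$ or exactly $1$. I would split on this. \textbf{Case A: some $v_b \in X$ has $\deg_X(v_b) \geq 2$} (equivalently, $G[X]$ has a component of size at least $3$). Here the free color is comparatively easy to obtain: pick two neighbors $v_a, v_c \in N(v_b) \cap X$ and set $\ucn(v_a) = \ucn(v_c) = v_b$, so that the single color $b$ simultaneously serves as the unique neighbor-color of two vertices of $X$; after ensuring no clique-neighbor of $v_a$ or $v_c$ is colored $b$, this is valid, and it ``frees up'' one of the colors $a$ or $c$, which (after possibly coloring one clique incident to $v_b$ to create the required duplicate occurrence, and with some care about the neighbors' own $G[X]$-degrees) I would designate as $f$. \textbf{Case B: $G[X]$ has maximum degree $1$}, so $G[X]$ is a matching together with some isolated vertices. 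If there are no isolated vertices ($G[X]$ is a perfect matching) I would invoke Lemma~\ref{lem:perfect_matching}; if at least one vertex of $X$ is isolated in $G[X]$ I would invoke Lemma~\ref{lem:iso_perfect_matching}. In every case the produced partial coloring satisfies the Rules, and Lemma~\ref{lem:isolated_in_X} finishes the proof.

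The hard part is establishing the free color, and this is exactly where the case distinction pays off. In Case A the degree-$\geq 2$ vertex lets two vertices of $X$ share a unique color, which is what creates slack for a free color; the only delicate point is when $v_b$ and its chosen neighbors have few clique-neighbors, so that creating the needed second occurrence of $f$ in the relevant neighborhood (or, in the fully degenerate situation $V = X$, coloring the clique $G[X]$ directly) must be handled as sub-cases. In Case B, however, a perfect matching gives every $v_i \in X$ exactly one $X$-neighbor $v_j$, so every color $i$ already appears once in $N(v_j)$ and no color is automatically free; extracting a free color then requires a genuinely global argument, and this is precisely why these configurations are isolated into Lemmas~\ref{lem:perfect_matching} and~\ref{lem:iso_perfect_matching} rather than dispatched inline. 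I expect the bulk of the real work, and the main obstacle, to lie in those two matching lemmas, with the proof of Theorem~\ref{thm:D2Csomeedges} itself being the routing argument that reduces to them.
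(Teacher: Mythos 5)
You follow the paper's architecture exactly: give each $v_i \in X$ the distinct color $i$, split on the structure of $G[X]$, route the degree-at-most-one configurations to Lemma~\ref{lem:perfect_matching} (perfect matching) and Lemma~\ref{lem:iso_perfect_matching} (matching plus isolated vertices), and let Lemma~\ref{lem:isolated_in_X} do the completion. Your Case B is literally the paper's last two cases, and your Case A is coextensive with the paper's first two cases, since $G[X]$ has a vertex of degree at least $2$ exactly when it has a component of size at least $3$. So the only place where your proposal does independent work is the free-color extraction in Case A, and that is where it breaks.

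The specific claim ``set $\ucn(v_a) = \ucn(v_c) = v_b$, and this frees up one of the colors $a$ or $c$'' is false. Take $G[X]$ to be the path $v_p v_a v_b v_c v_q$ and choose the center $v_b$ with neighbors $v_a, v_c$ (a legitimate choice under your recipe). The pendant vertex $v_p$ has $v_a$ as its only neighbor in $X$, so in the initial phase $v_p$ must rely on color $a$ for its uniquely colored neighbor; symmetrically $v_q$ must rely on color $c$. Hence neither $a$ nor $c$ can serve as the free color, no matter how clique-neighbors are colored, and your hedge about ``care about the neighbors' own $G[X]$-degrees'' is carrying the entire burden without saying what that care is. Two repairs exist. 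One is a pigeonhole argument that salvages your idea: every vertex of $Y$ designates an $X$-neighbor as its uniquely colored neighbor, such a neighbor necessarily lies in $Y$, and your designation map from $Y$ to $Y$ is non-injective ($v_a, v_c \mapsto v_b$), hence non-surjective; so some $v_f \in Y$ is nobody's designated neighbor, and that $f$ is free --- Rule (v)(a) holds because $v_f \in Y$ has its own designated neighbor --- but $f$ need not be $a$ or $c$; in the example it is $p$ or $q$. The other repair is what the paper actually does: inside a component $A$ with $|A| \geq 3$, either some $v_j \in A$ has $\deg_X(v_j) = 1$, in which case its unique neighbor $v_k$ has another neighbor $v_\ell$ to designate and no other vertex is adjacent to $v_j$, so $j$ is free; or every vertex of $A$ has degree at least $2$, in which case every vertex of $A$ can designate a neighbor other than an arbitrarily chosen $v_j \in A$, again making $j$ free. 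With either repair your Case A closes, and the rest of your routing coincides with the paper's proof.
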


The proof of Theorem \ref{thm:D2Csomeedges} involves a lot of cases. The cases when 
$G[X]$ is 1-regular and all the vertices in $G[X]$ have degree 0 or 1 needs particular care.
We state these two cases below. The proofs of Lemmas \ref{lem:perfect_matching} and \ref{lem:iso_perfect_matching}, are proved in 
Sections \ref{sec:perfectmatching} and \ref{sec:zerone} respectively. 
All the proofs in this section and subsequent sections will only deal with the initial
 phase, i.e., to achieve a partial coloring of the graph that satisfies the Rules
of Lemma \ref{lem:isolated_in_X}. The completion phase follows by 
Lemma \ref{lem:isolated_in_X}.

\begin{lemma}\label{lem:perfect_matching}
Let $G = (V,E)$ be a graph and $X\subseteq V$ be a set of vertices such that 
$|X|=d\geq 3$ and $G[V\setminus X]$ is a disjoint union of cliques. 
If $G[X]$ is 1-regular (perfect matching), 
then $\chi_{ON}(G)\leq d+1$. 
%$d+1$ color are sufficient to \cf{} color $G$. 
\end{lemma}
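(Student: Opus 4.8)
The plan is to produce a partial coloring $C:V\to[d+1]$ satisfying the six Rules of Lemma~\ref{lem:isolated_in_X} and then to let that lemma's completion phase finish the coloring. The first observation is that since $G[X]$ is $1$-regular, $d$ is even and $d\geq 4$, and---crucially---the set $Y=\{v_i\in X:\deg_X(v_i)\geq 1\}$ equals all of $X$. Hence $X\setminus Y=\emptyset$, which makes Rules (iv) and (vi) and the second half of Rule (iii) vacuous: the only substantive tasks are to color $X$ with distinct colors, to give every vertex of $X$ (and every clique vertex we color) a uniquely colored neighbor, and to exhibit a free color $f$.

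I would begin with $C(v_i)=i$ for all $i$ and the baseline assignment $\ucn(v_i)=\,$its matching partner, which is legitimate as long as no clique vertex we color duplicates a partner's color in the relevant neighborhood. The difficulty specific to this case is Rule (v): under the baseline assignment the unique color seen by $v_i$ is the color of its partner, so as $i$ ranges over $X$ \emph{every} color in $[d]$ occurs as somebody's unique color, and no color is free. Thus exactly one matched pair must be sacrificed to liberate a color.

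Concretely, using connectedness (together with $V\setminus X\neq\emptyset$, which holds because a perfect matching on $d\geq 4$ vertices is disconnected on its own), I would pick an external vertex $b\in X$ having a neighbor $w$ that lies in some clique $K$ of $G[V\setminus X]$. Let $a$ be the partner of $b$ and set the free color $f:=C(a)$, so $v_f=a$. I keep $\ucn(a)=b$, whose color $C(b)\neq f$ survives the completion phase, which in the present situation only ever introduces the colors $f$ or $d+1$ into the neighborhood of an $X$-vertex, since $X\setminus Y=\emptyset$ makes the remaining sub-cases of Lemma~\ref{lem:isolated_in_X} inert. To stop $b$ from relying on color $f$---its partner $a$ being the unique vertex of that color---I reroute $b$ by fully coloring $K$: assign $w$ a color $\gamma\in[d]\setminus\{f\}$ so that $\gamma$ occurs exactly once in $N(b)$ and set $\ucn(b)=w$, color one further vertex of $K$ with $d+1$, and give the remaining vertices of $K$ the color $f$, so that every vertex of $K$ obtains a uniquely colored neighbor in the usual clique pattern. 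With $f$ serving as nobody's unique color while $v_f$ still has one, all six Rules hold and Lemma~\ref{lem:isolated_in_X} completes the coloring.

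The step I expect to be the main obstacle is the choice of the reroute color $\gamma$ (and of $w$): coloring $w$ with $\gamma$ introduces a second vertex of color $\gamma$, and this destroys the baseline unique color of the partner of $v_\gamma$ precisely when that partner happens to lie in $N(w)$. Avoiding this forces a careful analysis of the adjacencies of $w$ into $X$; the benign case is when $w$ has few neighbors in $X$ (so some admissible $\gamma$ survives), whereas the troublesome sub-case is when the external neighbors of $b$ are adjacent to almost all of $X$, which I expect must be treated separately (for instance by reassigning $C(a)$ and doubling the color $f$ inside $N(b)$ rather than introducing a fresh color). Managing these sub-cases, while simultaneously ensuring that $\ucn(a)=b$ is not disturbed when $a\in N(w)$, is the delicate heart of the argument and is presumably why this case is deferred to its own section.
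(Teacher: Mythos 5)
Your high-level framing is exactly the paper's: since $G[X]$ is a perfect matching we have $Y=X$, the rules of Lemma~\ref{lem:isolated_in_X} concerning $X\setminus Y$ are vacuous, and the whole difficulty is Rule~(v) --- under the baseline assignment every color of $[d]$ is somebody's unique color, so one matched pair $\{a,b\}$ must be sacrificed, with $f=C(a)$ freed and $b$ rerouted to a newly colored clique vertex $w$. This is precisely what the paper does in its Cases~2 and~3 (where some suitable external vertex has $\deg_X<|X|$). But your proposal stops where the proof starts: the two steps you flag with ``I expect this to be the main obstacle'' and ``I expect [it] must be treated separately'' are not side issues to be managed later; they are the content of the lemma, and your sketches for them do not obviously succeed.

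Concretely, two gaps. First, the admissibility of the reroute color $\gamma$: coloring $w$ with $\gamma$ kills the baseline unique color of the partner of $v_\gamma$ whenever that partner lies in $N(w)$, so $\gamma$ must avoid the set $\{C(\mathrm{partner}(u)) : u\in N(w)\cap X\}$, whose size equals $\deg_X(w)$. Hence an admissible $\gamma$ exists if and only if $\deg_X(w)<d$, and when every $X$-adjacent external vertex is adjacent to \emph{all} of $X$ (the paper's Case~1, e.g.\ $X$ plus a single clique each of whose vertices sees all of $X$), your reroute is impossible for every choice of $b$ and $w$. The paper needs four separate constructions here (Subcases~1.1--1.4): in Subcase~1.1 it abandons the framework of Lemma~\ref{lem:isolated_in_X} entirely and recolors all of $X\setminus\{v_1\}$ with $d+1$ --- i.e., it gives up the constraint $C(v_i)=i$ that your plan treats as fixed --- and in Subcases~1.2--1.4 it chains \emph{two} simultaneous color duplications through two cliques (e.g.\ $C(w)=3$ and $C(v)=1$, so that $v_2$ reroutes to $w$ while every other $v_i$ reroutes to $v$), an idea absent from your proposal; your fallback of ``reassigning $C(a)$ and doubling the color $f$ inside $N(b)$'' is neither specified nor checked against any rule. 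Second, even in the benign case $\deg_X(w)<d$, your claim that the rest of $K$ gets unique colors ``in the usual clique pattern'' is unverified: the vertex $z$ colored $d+1$ sees $\gamma$ twice if $v_\gamma\in N(z)$, sees $f$ multiple times once $|K|\geq 3$, and may have no other distinctly colored neighbor in $X$, so $z$ can end up with no uniquely colored neighbor. This is exactly why each of the paper's cases splits further into ``there exists $v'\in K$ with $v_j\notin N(v')$'' versus ``all of $K$ is adjacent to $v_j$,'' with a genuinely different coloring pattern (everyone relying on $v_j$ itself) in the second branch. So the skeleton is right, but the case analysis you defer is the proof, and it is missing.
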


\begin{lemma}\label{lem:iso_perfect_matching}
Let $G=(V,E)$ be a graph and $X\subseteq V$ be a set of vertices, such that $|X|=d\geq 3$ and 
$G[V\setminus X]$ is a disjoint union of cliques. 
Moreover all the vertices in $G[X]$ 
%contains vertices with 
have degree at most 1 and 
at least one vertex has degree 0. 
Then 
$\chi_{ON}(G)\leq d+1$. 
\end{lemma}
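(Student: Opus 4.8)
The plan is to produce only the \emph{initial phase}, i.e.\ a partial coloring $C\colon V \to [d+1]$ meeting the Rules of Lemma~\ref{lem:isolated_in_X}, and then invoke that lemma for the completion phase. Write $X = I \cup M$, where $M$ is the set of matched vertices (those of degree $1$ in $G[X]$, paired up by the matching) and $I$ the set of isolated vertices of $G[X]$; by hypothesis $I \neq \emptyset$, and since we are in the non-independent case, $M \neq \emptyset$. Thus $Y = M$ and $X \setminus Y = I$. I would begin, as in every case, by setting $C(v_i) = i$ for all $i$ (Rule~(i)).

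For the matched part, I would let each $v_i \in M$ take its matching partner $v_{i'}$ (which carries the distinct color $i'$, with $v_{i'} \in M$) as $\ucn(v_i)$. This is legitimate because $v_{i'}$ is the only $X$-neighbor of $v_i$, so it suffices that no vertex of $V\setminus X$ adjacent to $v_i$ ever receives the color $i'$. In the initial phase I would never assign a matched color to a neighbor of its mate, and the completion phase of Lemma~\ref{lem:isolated_in_X} assigns a matched color to a clique vertex only when that vertex's entire $X$-neighborhood lies inside $I \cup \{v_f\}$ (inspect Case~1 and the reassignments of Case~2), hence never to a vertex adjacent to a matched vertex or its mate. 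This settles Rule~(iii) for all of $Y$, while Rules~(i) and~(vi) for the isolated vertices are arranged simply by not yet assigning isolated colors to $V\setminus X$.

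The heart of the argument is the \emph{free color} (Rule~(v)). Because $M$ is perfectly matched, each matched vertex's unique color is its mate's color, so \emph{every} matched color is already spoken for; the free color must therefore be an isolated vertex's color. I would fix an isolated vertex $v_a$, declare $f = a$, and color the clique $K$ containing a chosen neighbor $w$ of $v_a$ so as to (a) give $v_a$ a uniquely colored neighbor of color $\neq a$, and (b) leave every colored vertex of $K$ with a uniquely colored neighbor without disturbing any matched vertex meeting $K$. The coloring of $K$ splits into cases according to $|K|$, whether $K$ contains a vertex not adjacent to $v_a$, and whether $K$ meets matched vertices --- exactly the dichotomy handled in Lemma~\ref{lem:xeq2} and Lemma~\ref{lem:D2C_mod_indep_case2} --- so that at worst one auxiliary clique is colored. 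Once this is done, Rules~(ii)--(vi) can be checked directly, and Lemma~\ref{lem:isolated_in_X} finishes the coloring, in particular assigning uniquely colored neighbors to the still-unserved isolated vertices.

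The main obstacle, and the reason this case is singled out, is robustly establishing the free color. The matching leaves no slack among matched colors, so $f$ is forced to come from $I$; but an isolated $v_a$ typically has neighbors scattered across several cliques, so its designated uniquely colored neighbor $w$ must carry a color that the completion phase provably never repeats in $N(v_a)$ as it colors those other cliques --- the delicate point being the few places where the completion assigns a color from $[d]$ (rather than $f$ or $d+1$) to a vertex adjacent to $v_f = v_a$. Choosing $C(w)$ and the coloring of $K$ so that this holds, while simultaneously keeping every matched vertex on its mate and keeping $f = a$ off the unique color of every vertex in $X$, is precisely what the Rules of Lemma~\ref{lem:isolated_in_X} are engineered to support, and threading the initial coloring through those Rules is where the detailed case analysis becomes unavoidable.
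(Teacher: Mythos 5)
Your plan has the right skeleton (an initial phase satisfying the Rules, then Lemma~\ref{lem:isolated_in_X} for completion), but its central commitment is a genuine gap: you insist that every matched vertex keeps its mate as its uniquely colored neighbor, and you deduce from this that the free color ``must'' be an isolated vertex's color. The paper does the opposite, and for good reason. In its Cases~2 and~3 of this lemma the free color is a \emph{matched} color, obtained by re-routing reliances: if some $v \in V \setminus X$ sees an isolated $v_1$ and both ends of a matched edge $v_2v_3$, the paper sets $C(v)=1$ and lets $v_1, v_2, v_3$ all rely on $v$, freeing color $2$; if $v$ sees $v_1$ and only $v_2$ of the pair, it lets $v_2$ rely on $v$ and $v_3$ on $v_2$, freeing color $3$. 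Your rigid scheme cannot handle exactly these configurations. Concretely, take $d=3$, $X=\{v_1,v_2,v_3\}$ with the single edge $v_2v_3$, and let $v_1$ have two neighbors $u_1, u_2$ lying in two \emph{different} singleton cliques, each $u_i$ adjacent to all of $v_1, v_2, v_3$. Keeping $v_2$ on $v_3$ and $v_3$ on $v_2$ forbids colors $2$ and $3$ on $u_1$ and $u_2$, so the designated neighbor of $v_1$ can only carry color $1$ or $d+1$. Color $1$ violates Rule~(v)(b) (color $1$ would then be the unique color $v_1$ relies on, and under your scheme there is no other candidate for $f$: any matched $f$ fails (v)(b) because its mate relies on it). Color $d+1$ is only safe when the relying vertex's whole neighborhood lies inside one clique --- the paper is explicit about this ``exception'' --- and here the completion phase will paint the other singleton clique $\{u_2\}$ with $d+1$ (Case~1 of Lemma~\ref{lem:isolated_in_X}) and destroy the unique neighbor. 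So after coloring ``at worst one auxiliary clique'' your partial coloring cannot satisfy the Rules at all; you would have to pre-color every clique meeting $N(v_1)$, which your proposal neither anticipates nor provides machinery for.

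A second, independent omission: when you color the auxiliary clique $K$, Rule~(iv) forces you to designate unique neighbors for every vertex of $X$ whose entire neighborhood is already colored, in particular every isolated $v_i$ with $N(v_i) \subseteq K$. Filling $K$ with the free color and $d+1$, as your sketch would, can starve all such vertices; the paper needs the sets $S_K$ and a dedicated auxiliary result (Lemma~\ref{lem:handlesk}) precisely to handle this, on top of its four-way case split according to how $V\setminus X$ meets $X \setminus Y$ and $Y$. Your appeal to the dichotomies of Lemma~\ref{lem:xeq2} and Lemma~\ref{lem:D2C_mod_indep_case2} does not substitute for either point: those lemmas operate where $|X|=2$ or $X$ is independent, i.e.\ where there are no mate-reliance constraints to respect --- and those constraints are exactly what makes the present case hard.
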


We first prove Theorem \ref{thm:D2Csomeedges} assuming the above lemmas.

\begin{proof}[Proof of Theorem \ref{thm:D2Csomeedges}]
Let $X = \{v_1, v_2, \dots, v_d\}$ and $Y = \{v_i \in X : \mbox{deg}_X (v_i)\geq 1\}$. 
%We explain how to assign $C: V  \rightarrow [d+1]$ such that 
%$C$ is a partial coloring that satisfies the rules of Lemma \ref{lem:isolated_in_X}.
Also, let $\mathcal A$ be the set of connected components of $G[X]$.

For each vertex $v_i\in X$, we assign a distinct color $C(v_i)=i$. We have the following cases depending 
on the components. 

\begin{itemize}[topsep=-0.25pt,itemsep=-0.5ex,partopsep=1ex,parsep=1ex]
    \item There exists a component $A \in \mathcal A$ such that $|A|\geq 3$, and $\exists v_j\in A$  with $\deg_X(v_j)=1$.

        Let $N(v_j)\cap X = \{v_k\}$. 
        Since $|A|\geq 3$, there exists $v_{\ell} \in N(v_k) \cap X$ such that 
        $v_{\ell} \neq v_j$.
        Every vertex in $A\setminus\{v_k\}$ chooses an arbitrary neighbor in $A$ as its 
        uniquely colored neighbor while we assign $\ucn(v_k) = v_{\ell}$.
        %$v_k$ chooses a neighbor other than $v_j$. 
        The color $j$ is not the unique color for 
        any vertex in $X$. 
        Hence we use $j$ as the free color for the rest of the coloring. 
        
        For all $A'\in \mathcal{A}\setminus {A}$, 
        where $|A'|\geq 2$, a vertex chooses one of its neighbors in $A'$ as its uniquely colored neighbor. Thus all the vertices in $Y$ have a uniquely colored neighbor.
        
        At this point, we have partially colored $G$ satisfying the rules of Lemma \ref{lem:isolated_in_X}. Using Lemma \ref{lem:isolated_in_X}, we can extend this to a full CFON coloring of $G$ that uses $d+1$ colors.

    \item There exists a $A\in \mathcal A$ such that 
    $|A|\geq 3$, and all the vertices in $A$ have degree at least 2 in $G[X]$. 

         Choose a vertex $v_j\in A$. Since every vertex in $A$ has degree at least 2,
         it can be ensured that every vertex in $A$ is assigned a uniquely colored
         neighbor other than $v_j$. 
         We use the color $j$ as the free color.

        For all $A'\in \mathcal{A}\setminus {A}$, 
        where $|A'|\geq 2$, a vertex chooses one of its neighbors in $A'$ as its uniquely colored neighbor. Thus all the vertices in $Y$ have a uniquely colored neighbor.
        
        Since the rules of Lemma \ref{lem:isolated_in_X} are satisfied, we can extend 
        this coloring to a CFON coloring of $G$ that uses $d+1$ colors.
    
    \item For all the components $A\in \mathcal A$, we have   $|A|= 2$. 

     In this case,  we have $X\setminus Y=\emptyset$. 
    All the vertices $v_i\in X$ have $deg_X(v_i)=1$. 
    We apply Lemma \ref{lem:perfect_matching} to CFON color $G$ with $d+1$ colors. 
 
    \item For all the components $A\in \mathcal A$, we have   $|A|\leq 2$. Moreover
    there exists $A' \in \mathcal A$ such that $|A'| = 1$.
    
    That is $X \setminus Y \neq \emptyset$. By assumption, $X$ is not an independent set.
    Hence $Y \neq \emptyset$ as well. We apply Lemma \ref{lem:iso_perfect_matching} to CFON color $G$ with $d+1$ colors. 
        \end{itemize}
\end{proof}

\subsection{Proof of Lemma \ref{lem:perfect_matching}}\label{sec:perfectmatching}
%The case when $G[X]$ is 1-regular}\label{sec:perfectmatching}

%\begin{lemma}\label{lem:perfect_matching}
%Let $G = (V,E)$ be a graph and $X\subseteq V$ be a set of vertices such that 
%$|X|=d\geq 3$ and $G[V\setminus X]$ is a disjoint union of cliques. 
%If $G[X]$ is 1-regular (perfect matching), 
%then $\chi_{ON}(G)\leq d+1$. 
%$d+1$ color are sufficient to \cf{} color $G$. 
%\end{lemma}

%\begin{proof}

Let $X = \{v_1, v_2, \dots, v_d\}$. 
%and $Y = \{v_i \in X : \mbox{deg}_X (v_i)\geq 1\}$. 
Since each vertex $v_i \in X$ has $\deg_X(v_i)=1$, we have that $d = |X|$ is an even number. 
This implies that $d \geq 4$.
%and $G[X]$ induces a perfect matching. 
WLOG, we may assume that the edges in $G[X]$ are $\{v_1, v_2\}, \{v_3, v_4\}, \ldots, \{v_{d-1}, v_d\}$.
We explain how to assign $C: V  \rightarrow [d+1]$ such that 
$C$ is a partial coloring that satisfies the rules of Lemma \ref{lem:isolated_in_X}.

For each vertex $v_i\in X$, we assign the color $C(v_i)=i$. 
We have the following cases.

\noindent
\textbf{Case 1:} There exists a vertex $v\in V\setminus X$
such that $\deg_X(v)=|X|$. 
    
Let $v \in K_1$, where $K_1$ is a clique in $G[V \setminus X]$.

%\noindent 
%\textcolor{red}{
%\textbf{Subcase 1.0:} For each $v\in V\setminus X$, $N(v)\cap X\neq \emptyset$. 
%If all vertices in $V\setminus X$ are adjacent to 
%            some vertex in $X$, 
%            then we assign the color $d+1$ to each vertex in $V\setminus X$. And every vertex in $G$ has a uniquely colored neighbor.}

            %For every other clique in $G[V\setminus X]$, 
            %use the colors 2, 3 and 4 to color them. 

\noindent
\textbf{Subcase 1.1:} $K_1$ is the only clique in 
%There exists only one clique $K_1$ in 
$G[V\setminus X]$.

    For all $v_i\in X\setminus \{v_1\}$, we reassign 
    $C(v_i)=d+1$. 
    Assign $C(v)=2$ and the 
    remaining vertices (if any) in $K_1\setminus \{v\}$ are assigned the color $d+1$. 
    %We have the color 3 as the free color. 
    
    We get that $\ucn(v)=v_1$ and for all $x\in V\setminus \{v\}$, $\ucn(x)=v$. 
    %Also for all $w\in K_1\setminus \{v\}$, $\ucn(w)=v$. 
    Thus the entire graph is CFON colored.

\noindent
\textbf{Subcase 1.2:} There exists a clique $K_2 \neq K_1$, such that 
$K_2=\{w\}$ and 
$\deg_X(w)\geq 2$. 
\begin{itemize}

                \item 
              \textbf{$N(w)\cap X$ contains a pair of adjacent vertices.} 
           
            WLOG, let $v_1,v_2\in N(w)$. 
            We have cases based on the size of the clique $K_1$.  
            \begin{itemize}
                \item $|K_1|=1$. 
                
                We assign $C(w)=3$ and $C(v)=1$. 
                            We have that
                $\ucn(v)=\ucn(w)=v_1$.
                Also,  $\ucn(v_2)=w$ and  for each $v_i\in X\setminus \{v_2\}$, $\ucn(v_i)=v$. 
                We have color 4 as the free color. 
                
                \item $|K_1|\geq 2$. 
               
                We consider an arbitrary vertex $v'\in K_1\setminus \{v\}$. We have three cases based on the neighborhood of $v'$.

                \begin{itemize}
                    \item $v_1\notin N(v')$.  
                %$v'$ is not adjacent to $v_1$ 
                %or $v_4\notin N(v')$, 
                %has a neighbor other than $v_4$, 
                
                We assign $C(w)=3$, 
                $C(v)=1$, $C(v')=d+1$ and the vertices (if any) 
                in $K\setminus\{v,v'\}$ the color 4. 
                
                We have that  
                $\ucn(w)=v_2$, $\ucn(v_2)=w$, 
                and for each $v_i\in X\setminus \{v_2\}$, $\ucn(v_i)=v$. 
                Also $\ucn
                (v')=v$ and for each $y\in K_1\setminus \{v'\}$, $\ucn(y)=v'$.
                We have color 4 as the free color.

%                \item $N(v') \cap \{v_3, v_4, \ldots, v_d\} \neq \emptyset$.
%                The vertex $v'$ has a neighbor other than $v_1$ and $v_2$. 

%                WLOG, let $v_4$ be a neighbor of $v'$. 
%                We assign $C(w)=3$, 
%                $C(v)=1$, $C(v')=d+1$ and  
%                 the vertices (if any) in $K\setminus \{v,v'\}$  the color 2. 

 %                We have that $\ucn(w)=v_2$, $\ucn(v_2)=w$, 
 %               and for each $v_i\in X\setminus \{v_2\}$, $\ucn(v_i)=v$. 
 %               Also $\ucn
 %               (v')=v_4$ and for each $y\in K_1\setminus \{v'\}$, $\ucn(y)=v'$.
%                We have  color 4 as the free color. 

                \item $v_1 \in N(v')$.

                We assign $C(w)=3$, 
                $C(v)=2$, $C(v')=d+1$ and the 
                 vertices (if any) in $K_1\setminus \{v,v'\}$ the color 4.

                 We have that  
                $\ucn(w)=v_2$, $\ucn(v_1)=w$
                and for each $v_i\in X\setminus \{v_1\}$, $\ucn(v_i)=v$. 
                Also $\ucn
                (v')=v_1$ and for each $y\in K_1\setminus \{v'\}$, $\ucn(y)=v'$.
                We have color 4 as the free color.

%                \item $N(v')\cap X=\{v_1\}$. 
                
%                We assign $C(w)=3$, $C(v)=2$, $C(v')=d+1$ and rest 
 %               of the vertices in $K_1\setminus \{v,v'\}$ are 
 %               assigned the color 4. We have color 4 as the free color. 
 %               The vertices $w$ and $v_1$ are the uniquely colored neighbors for each other. 
 %               %$v_1$ 
 %               %and $v_1$ is the uniquely colored neighbor for $w$. 
 %               Every vertex in $K_1\setminus \{v'\}$ will have $v'$ as the 
 %               uniquely colored neighbor while $v$ is the uniquely colored neighbor for 
 %               $v'$. 
 %               %and $v_1$. 
 %               The vertex 
 %               $v$ is the uniquely colored neighbor 
%                for all vertices in $X\setminus\{v_1\}$. 

                \end{itemize}
    
            \end{itemize}

                \item \textbf{None of the vertices in $N(w) \cap X$ are adjacent to each other.} 
                
                WLOG, let $v_1, v_3\in N(w)$. %and $v_2, v_4 \notin N(w)$. 
                 We have cases based on the size of the clique $K_1$.  
                 % and $v_3$ be two neighbors of $w$. 
                %that are not adjacent to $w$. each other but are both adjacent to 
                %$w$. 

              \begin{itemize}
                  \item $|K_1|=1$.  
                  
                  We assign $C(v)=4$ and $C(w)=1$. 
                We have $\ucn(w) = \ucn(v) = v_1$. Also, $\ucn(v_3) = w$,
                and for each $v_i\in X\setminus \{v_3\}$, $\ucn(v_i)=v$. 
                We have color 3 as the free color. 
                
                \item $|K_1|\geq 2$. 
                
                We consider an arbitrary vertex $v'\in K_1\setminus \{v\}$. We have two cases based on the neighborhood of $v'$.        
                
    \begin{itemize}
    
         \item $v_2\notin N(v')$. 
                
                We assign $C(w)=3$, 
                $C(v)=2$, $C(v')=d+1$ and the
                vertices (if any) in $K_1\setminus \{v,v'\}$ the 
                color 4. 
                
                We have that $\ucn(w)=v_1$, $\ucn(v_1)=w$, 
                and for each $v_i\in X\setminus \{v_1\}$, $\ucn(v_i)=v$. 
                Also $\ucn
                (v')=v$ and for each $y\in K_1\setminus \{v'\}$, $\ucn(y)=v'$.
                Color 4 is the free color.

        \item  $v_2\in N(v')$. 
        
                We assign $C(w)=1$,
                $C(v)=4$,  $C(v')=d+1$ and the 
                 vertices (if any) in $K_1\setminus \{v,v'\}$ the 
                color 3. 

                We have that $\ucn(w)=v_1$, $\ucn(v_3)=w$, 
                and for each $v_i\in X\setminus \{v_3\}$, $\ucn(v_i)=v$. 
                Also $\ucn
                (v')=v_2$ and for each $y\in K_1\setminus \{v'\}$, $\ucn(y)=v'$.
                Color 3 is the free color.

    \end{itemize}

              \end{itemize}

 \end{itemize}           
    
    \noindent
    \textbf{Subcase 1.3:}  There exists a clique $K_2 \neq K_1$, such that 
$K_2=\{w\}$ and 
$\deg(w)= 1$.

WLOG let $N(w)=\{v_1\}$. We have cases based on the size of the clique $K_1$. 
 
     \begin{itemize}
           
            \item $|K_1|=1$. 

            We assign $C(v)=2$ and $C(w)=3$. 
            We have $\ucn(v) = \ucn(w) = v_1$. 
            Also, $\ucn(v_1) = w$,
                and for each $v_i\in X\setminus \{v_1\}$, $\ucn(v_i)=v$. 
                Color 4 is the free color. 
                
            \item    $|K_1|\geq 2$. 
            
            \begin{itemize}
                \item There exists a vertex $v'\in K_1\setminus \{v\}$, 
                such that $v_2\notin N(v')$.

                We assign 
                $C(v)=2$, $C(w)=3$, $C(v')=d+1$ and the vertices (if any) in $K_1\setminus\{v,v'\}$ the color 4. 
                
                We have that $\ucn(w)=v_1$, $\ucn(v_1)=w$, 
                and for each $v_i\in X\setminus \{v_1\}$, $\ucn(v_i)=v$. 
                Also $\ucn
                (v')=v$ and for each $y\in K_1\setminus \{v'\}$, $\ucn(y)=v'$.
                Color 4 is the free color. 
   
                \item All the vertices in $K_1$ are adjacent to $v_2$. 
                
                Choose a vertex $v'\in  K_1\setminus \{v\}$. Assign 
                $C(w)=4$, $C(v)=1$, $C(v')=3$ and the vertices (if any) in $K_1\setminus\{v,v'\}$ the color $d+1$.  
                
                We have that $\ucn(w)=v_1$, $\ucn(v_2)=v'$, 
                and for each $v_i\in X\setminus \{v_2\}$, $\ucn(v_i)=v$. 
                Also for each $y\in K_1$, $\ucn(y)=v_2$.
                Color 4 is the free color. 
            \end{itemize}
                
       \end{itemize} 

\noindent
 \textbf{Subcase 1.4:} There exists a clique $K_2 \neq K_1$, such that 
$|K_2|\geq 2$. 

Since $G$ is connected, there is an edge between $X$ and $K_2$. WLOG, we assume
that $v_1$ is adjacent to $w \in K_2$.
We now divide the cases based on the size of the clique  $K_1$. 

\begin{itemize}
    \item $|K_1| =1$. 
    \begin{itemize}
        \item There exists a vertex 
$w'\in K_2\setminus \{w\}$ such that 
$v_3\notin N(w')$.

We assign $C(v)=2$, $C(w)=3$, $C(w')=d+1$, 
and the vertices (if any) in $K_2\setminus \{w,w'\}$ the color 4. 
   
We have that $\ucn(w')=w$, and 
for each $y\in K_2\setminus \{w'\}$,   $\ucn(y)=w'$. 
Also $\ucn(v)=v_1$,  $\ucn(v_1)=w$, 
and for each $v_i\in X\setminus \{v_1\}$, 
$\ucn(v_i)=v$. 
Color 4 is the free color.

\item All the vertices in $K_2\setminus \{w\}$ are adjacent to  $v_3$. 
        
 We assign $C(v)=2$, $C(w)=4$, and the vertices in $K_2\setminus \{w\}$ the color $d+1$. 

We have that $\ucn(v)= \ucn(w)=v_1$ and 
for each $y\in K_2\setminus \{w\}$,   $\ucn(y)=v_3$. 
Also $\ucn(v_1)=w$, and for each $v_i\in X\setminus \{v_1\}$, 
$\ucn(v_i)=v$. 
Color 1 is the free color.
    \end{itemize}
    
    \item $|K_1|\geq 2$. 
    
    \begin{itemize}
        \item There exists a vertex $v'\in K_1\setminus \{v\}$ such that
$v_2\notin N(v')$. 

        There are two subcases based on the neighborhood of the vertices in $K_2$.

\begin{itemize}
    \item There exists a vertex 
%Whether there exists a 
$w'\in K_2\setminus \{w\}$ such that 
$v_3\notin N(w')$.

We assign $C(v)=2$, $C(v')=d+1$, $C(w)=3$, $C(w')=d+1$ and the vertices (if any) in $K_1\setminus \{v,v'\}$ and $K_2\setminus \{w,w'\}$ the color 4.

We have that $\ucn(v')=v$, and 
for each $y\in K_1\setminus \{v'\}$, $\ucn(y)=v'$.    
Also $\ucn(w')=w$, and 
for each $y\in K_2\setminus \{w'\}$,   $\ucn(y)=w'$. 
Moreover, $\ucn(v_1)=w$, 
and for each $v_i\in X\setminus \{v_1\}$, 
$\ucn(v_i)=v$. 
Color 4 is the free color.
            
\item All the vertices in $K_2\setminus \{w\}$ are adjacent to  $v_3$. 
            
            We assign $C(v)=2$, $C(v')=d+1$, $C(w)=4$, the vertices (if any)  
            in $K_1\setminus \{v,v'\}$ the color  1 and the vertices  
            in $K_2\setminus \{w\}$ the color $d+1$.

We have that $\ucn(v')=v$, and 
for each $y\in K_1\setminus \{v'\}$, $\ucn(y)=v'$.    
Also $\ucn(w)=v_1$, and 
for each $y\in K_2\setminus \{w\}$,   $\ucn(y)=v_3$. 
Moreover, $\ucn(v_1)=w$, 
and for each $v_i\in X\setminus \{v_1\}$, 
$\ucn(v_i)=v$. 
Color 1 is the free color.
\end{itemize}

\item All the vertices in $K_1$ are adjacent to $v_2$. 

We choose $v' \in K_1 \setminus \{v\}$ arbitrarily. 
We assign $C(v)=1$, $C(v')=3$ and 
the vertices (if any) in $K_1\setminus \{v,v'\}$ 
the color 4. 
%We do not require the use of $K_2$ and hence ignore its coloring as of now. 
%We color the clique $K_2$ along with other cliques in $G[V\setminus X]$ using lemma 
%\ref{lem:isolated_in_X}. 
We leave the clique $K_2$ uncolored for now.

We have that  
for each $y\in K_1$, $\ucn(y)=v_2$. 
Moreover, $\ucn(v_2)=v'$, 
and for each $v_i\in X\setminus \{v_2\}$, 
$\ucn(v_i)=v$. 
Color 4 is the free color.

%           The vertex $v$ is the uniquely colored neighbor for all vertices in 
%            $X\setminus \{v_2\}$. 
%            The vertex $v'$ is the uniquely colored neighbor for $v_2$. 
%            The vertex $v_2$ is the uniquely colored neighbor for all 
%            vertices in $K_1$. 

%\bscomment{If $|K_1|=1$, we do the same process. }

    \end{itemize}

\end{itemize}

\noindent
\textbf{Case 2:}
%    $(\star\star)$ 
For all  $y \in V \setminus X$, we have $\deg_X(y) < |X|$. 
    And there is a vertex
    $v\in V\setminus X$ such that $\deg_X(v)\geq  2$. 
    
    Let $v\in K$ where $K$ is a clique in $G[V \setminus X]$. 
        Recall that since $G[X]$ is 1-regular, it follows that $|X|\geq 4$. 
        We have the following cases depending on the neighborhood of $v$. 
  \begin{itemize}      
 
            \item \textbf{$N(v)\cap X$ contains a pair of adjacent vertices.} 
            
            WLOG, let $v_3,v_4\in N(v)$. Since 
            $\deg_X(v) < |X|$, 
            WLOG we assume $v_1\notin N(v)$.
            
    %        This implies that 
            %beis not adjacent to 
    %        $v_1, v_2\notin N(v)$. 
\begin{itemize}
    \item  $|K|=1$.

                 We assign $C(v)=2$. 
                 We get 
                 that $\ucn(v)=v_3$ and 
                 $\ucn(v_3)= \ucn(v_4) = v$.
                 For each $v_i \in X \setminus \{v_3, v_4\}$, the uniquely 
                 colored neighbor  is the lone neighbor of $v_i$ in $X$.
                 %for all $v_i \in N(v) \cap X$, we have
                 %$\ucn(v_i) = v$. In particular, $\ucn(v_3)= \ucn(v_4) = v$.
                 The color 3 is the free color.

           \item  $|K|\geq 2$. 
           \begin{itemize}
         %      \item $v_2\in N(v).$

                    \item There exists a vertex $v'\in K\setminus \{v\}$ such that 
                   $v_2\notin N(v')$.

                   We assign $C(v)=2$, $C(v')=d+1$ and the 
                   vertices (if any) in $K\setminus \{v,v'\}$ 
                   the color 4. 
                   
                   We have that $\ucn(v')=v$, and 
                   for all $y\in K\setminus \{v'\}$, $\ucn(y)=v'$. 
                   Also $\ucn(v_3) = \ucn(v_4) = v$, 
                   and for each $v_i\in X \setminus \{v_3, v_4\}$, 
                   the uniquely 
                 colored neighbor is the lone neighbor of $v_i$ in $X$.
                   We have color 4 as the free color.

                \item All vertices in $K\setminus \{v\}$ are adjacent to $v_2$. 
                
                Let $v'$ be arbitrarily chosen from $K \setminus \{v\}$. 
                We assign $C(v)=1$, $C(v')=3$ and the vertices (if any) in $K\setminus \{v,v'\}$ 
                the color $d+1$. 
                
                We have that $\ucn(v) = v_4$, and for all $y\in K \setminus \{v\}$, $\ucn(y)=v_2$. 
                Also $\ucn(v_2) = v'$, 
                  $\ucn(v_3)= \ucn(v_4)=v$, 
                   and for all $v_i\in X\setminus \{v_2, v_3, v_4\}$, 
                   the uniquely 
                 colored neighbor is the lone neighbor of $v_i$ in $X$.
                   We have  color 4 as the free color.

\begin{comment}
    
        \item $v_2\notin N(v)$.

        \begin{itemize}
            \item There exists a vertex $v'\in K\setminus \{v\}$ such that 
                   $v_2\notin N(v')$
                   
                    We assign $C(v)=2$, $C(v')=d+1$ and the vertices (if any) in $K\setminus \{v,v'\}$ the color 3. 
                    
                    We get that $\ucn(v')=v$ and for all $w\in K\setminus \{v'\}$, $\ucn(w)=v'$. 
                    For all $v_i\in N(v)\cap X$, 
                    $\ucn(v_i)=v$. Color 3 is the free color.

                    \item All vertices in $K\setminus \{v\}$ are adjacent to $v_2$.

                Let $v'$ be arbitrarily chosen from $K \setminus \{v\}$. 
                We assign $C(v)=1$, $C(v')=3$ and the vertices (if any) in $K\setminus \{v,v'\}$ 
                the color $d+1$. 
                
                We have that $\ucn(v)=v_4$ and 
                for all $w\in K\setminus \{v\}$, $\ucn(w)=v_2$. 
                Also 
                  $\ucn(v_1)=v_2$, $\ucn(v_2)=v'$, 
                   and for all $v_i\in N(v)\cap X$, $\ucn(v_i)=v$. 
                   We have the color
                   4 as the free color. 
                    
        \end{itemize}
\end{comment}
    \end{itemize}

         \begin{comment}
                     
            \begin{itemize}
                \item There exists a $v'\in K\setminus \{v\}$ such that 
            $v_2\notin N(v')$.

            we assign $C(v)=2$, $C(v')=d+1$ and 
            rest of the vertices (if any) are colored 
            with color 3. 
            The color 3 acts as the free color. 
            
            The vertex $v$ is the uniquely colored neighbor for $v_3$, $v_4$ and $v'$. 
            For all the vertices in $K\setminus \{v'\}$, $v'$ is the uniquely colored neighbor. 
            
            \item Else, every vertex in $K\setminus \{v\}$ is adjacent to $v_2$. 
            We assign $C(v)=1$ and color the rest of the vertices with $d+1$. 
            We have the color 3 as the free color. 
            The vertex $v$ is the uniquely colored neighbor for $v_3$ and $v_4$. 
            For all the vertices in $K$, the vertex $v_2$ is the uniquely colored neighbor. 
\end{itemize}
                     \end{comment}

            \end{itemize}
            \item  \textbf{None of the vertices in $N(v) \cap X$ are adjacent to each other.} 
            
            WLOG $v_2,v_4\in N(v)$ which implies 
            $v_1, v_3\notin N(v)$. 
            %is adjacent to vertices $v_3$ and $v_5$. 
            %This implies that $v$ is not adjacent to $v_1$ and $v_2$. 
            %WLOG let $v_2$ and $v_3$ be a neighbor of $v$
             
           \begin{itemize}
               \item $|K|=1$. 
               
               We assign $C(v)=2$. 
                We get 
                 that $\ucn(v)=v_2$ and %$\ucn(v_1)=v_2$, 
                 $\ucn(v_2)=\ucn(v_4) = v$.
                 For each $v_i \in X \setminus \{v_2, v_4\}$, the uniquely 
                 colored neighbor is the lone neighbor of $v_i$ in $X$.
                 The color 3 is the free color.
    
    \item $|K|\geq 2$. 
    
    \begin{itemize}
                \item There exists a $v'\in K\setminus \{v\}$ such that 
            $v_2\notin N(v')$.

            We assign $C(v)=2$, $C(v')=d+1$ and 
           the vertices (if any) in $K \setminus \{v, v'\}$ the color 3.
            
            We get that 
            $\ucn(v') = v$ and for all $y \in K \setminus \{v'\}$,
            $\ucn(y) = v'$.
            Moreover, $\ucn(v_2)=\ucn(v_4) = v$, and 
            for each $v_i \in X \setminus \{v_2, v_4\}$, the uniquely 
                 colored neighbor is the lone neighbor of $v_i$ in $X$.
            The color 3 is the free color.
           
            \item Every vertex in $K$ is adjacent to $v_2$. 
            
            %Else, every vertex in $K\setminus \{v\}$ is adjacent to $v_2$. 
            We assign $C(v)=4$ and the vertices in
            $K \setminus \{v\}$ the color $d+1$. 
            
            We get that for all $y \in K$, $\ucn(y) = v_2$.
            Moreover, $\ucn(v_2)=\ucn(v_4) = v$, and 
            for each $v_i \in X \setminus \{v_2, v_4\}$, the uniquely 
                 colored neighbor is the lone neighbor of $v_i$ in $X$.
            The color 3 is the free color.
            
            \end{itemize}

           \end{itemize}

        \end{itemize}

        \noindent
        \textbf{Case 3:} Every vertex $v \in V\setminus X$ has $\deg_X(v)\leq 1$. 
        
        Since $G$ is connected, there is a $v \in V\setminus X$ such that $\deg_X(v) = 1$. Let $v \in K$ where $K$ is a clique in $G[V\setminus X]$. 
        WLOG let $N(v)\cap X=\{v_1\}$. 
        
        \begin{itemize}
            \item $|K|=1$. 
            
            We assign $C(v)=1$. We get that $\ucn(v) = v_1$.
           Also, $\ucn(v_1)=v$ and 
                 for each $v_i \in X \setminus \{v_1\}$, the uniquely 
                 colored neighbor is the lone neighbor of 
                 $v_i$ in $X$. 
                 The color 2 is  the free color.

        \item $|K|\geq 2$. 
        
       \begin{itemize}
            \item There exists a vertex $v'\in K\setminus \{v\}$ 
            such that 
            $v_1\notin N(v')$.

            We assign $C(v)=1$, $C(v')=d+1$ and the vertices (if any) in $K\setminus \{v,v'\}$ the color 2. 
 
            We get that $\ucn(v')=v$ and for all $y\in K\setminus \{v'\}$, $\ucn(y)=v'$.
                 Moreover, $\ucn(v_1)=v$, and for 
                  each $v_i \in X \setminus \{v_1 \}$, the uniquely 
                 colored neighbor is the lone neighbor of 
                 $v_i$ in $X$. 
                 The color 2 acts as the free color.

            \item All vertices in $K$ are adjacent to $v_1$.

            We assign $C(v)=3$ and 
            the vertices in $K\setminus \{v\}$ the color $d+1$. 
            
            We get that for all $y \in K$, $\ucn(y) = v_1$.
            Moreover, $\ucn(v_1)=v$, and
                 for each $v_i \in X \setminus \{v_1\}$, the uniquely 
                 colored neighbor is the lone neighbor of 
                 $v_i$ in $X$. 
                 The color 2 acts as the free color. 
 
     \end{itemize}    
        
        \end{itemize}
        
       % In all of the cases, except for Subcase 1.1, we have partially colored $G$. We have identified the uniquely
       % colored neighbors of some vertices in $X$. 
        %For the remaining $v_i\in X$, for which we did not specify the 
        %uniquely colored neighbor, we choose the sole neighbor of $v_i$ in $X$ 
        %and assign it as the uniquely colored neighbor. 
        
        In Subcase 1.1, $G$ has been assigned
        a full CFON coloring using $d+1$ colors. In all
        the other cases (and subcases therein),
         $G$ has been partially colored satisfying the rules of Lemma \ref{lem:isolated_in_X}.
        The uncolored cliques $K\in G[V\setminus X]$ can be colored with 
        the application of Lemma \ref{lem:isolated_in_X}, yielding a
        %with the help of the free color. 
        %Thus we obtain a 
        full CFON coloring of $G$ that uses $d+1$ colors.

%\end{proof}

\subsection{Proof of Lemma \ref{lem:iso_perfect_matching}}\label{sec:zerone}
%The case when every vertex in $G[X]$ has degree 0 or 1}\label{sec:zerone}

%\begin{lemma}\label{lem:iso_perfect_matching}
%Let $G=(V,E)$ be a graph and $X\subseteq V$ be a set of vertices, such that $|X|=d\geq 3$ and 
%$G[V\setminus X]$ is a disjoint union of cliques. 
%Moreover all the vertices in $G[X]$ 
%contains vertices with 
%have degree at most 1 and 
%at least one vertex has degree 0. 
%Then 
%$\chi_{ON}(G)\leq d+1$. 
%\end{lemma}

%\begin{proof}
Let $X = \{v_1, v_2, \dots, v_d\}$ and 
$Y = \{v_i \in X : \deg_X (v_i)\geq 1\}$.
By the conditions in the statement of the lemma, we have
$X\setminus Y \neq \emptyset$ and for each vertex $v\in Y$, $\deg_X(v)= \deg_Y(v)=1$. For each vertex $v_i\in X$, we assign the color $C(v_i)=i$.

\textbf{High Level Idea:} We have four cases depending on
how the vertices in $V\setminus X$ interact with $X\setminus Y$
and $Y$. 
In each case, we choose a vertex $v\in K$ for some clique $K \subseteq G[V\setminus X]$. 
We assign colors to the vertices in $K$ such that 
%we have a free color and 
all the vertices in $K$ and $N(v)\cap X$ 
have a uniquely colored neighbor,  
%for each vertex in $K$ and the vertices in $N(v)$. 
while satisfying the rules of Lemma \ref{lem:isolated_in_X}. 
In particular, we identify a free color from the above partial coloring.
We use Lemma \ref{lem:isolated_in_X} to color the remaining vertices and obtain a \cf{} coloring of $G$. 

The key obstacle here is that while coloring the clique $K$, we could end up assigning the free color or the color $d+1$ to multiple vertices of $K$. 
There could exist vertices $v_i\in X\setminus Y$, such that 
$N(v_i)\subseteq K$ and all the vertices in $N(v_i)$
 are assigned the free color and the color $d+1$. 
This may leave the vertex $v_i$ without a uniquely colored neighbor. Hence, while coloring $K$, we need to handle these 
vertices separately. Let $S_K$ be the set of such vertices.
%Formally, 
%$S_K = \{v_i \in X \setminus Y : N(v_i) \subseteq K\}$.

Formally, 
$S_K = \{v_i \in X \setminus Y : N(v_i) \subseteq K\} \setminus N(v)$.
The vertices in $N(v) \cap (X \setminus Y)$ rely on $v$ for their uniquely colored 
neighbor and hence does not require special attention.

Lemma \ref{lem:handlesk} shows that we can color $K$ in such a way that
all the vertices in $S_K$ have a uniquely colored neighbor,
and satisfying all the rules of Lemma \ref{lem:isolated_in_X}. Lemma 
\ref{lem:handlesk} will be proved after completing the proof of Lemma
\ref{lem:iso_perfect_matching}. For now,
we shall assume Lemma \ref{lem:handlesk} and proceed. 

\newcounter{routine}
\setcounter{routine}{\value{theorem}}
\begin{lemma}\label{lem:handlesk}
Let $G = (V,E), X, Y$ be as above. Let $v \in K$ where $K$ is a 
clique in $G[V \setminus X]$ such that $|K| \geq 2$. 
Let $C(v_i) = i$ for all $v_i \in X$ and all the vertices in $K \setminus \{v\}$ are uncolored. 
Suppose $C(v)$ is assigned and the free color $f$ is identified, in such a way that $v$ relies on a color
other than $f$ as the unique color in its neighborhood.
%If $C(v)$ and the free color $f$ are identified, 
Then $K$ can be colored in such a way that all the vertices in $S_K$ have a 
uniquely colored neighbor, and satisfying all the rules of Lemma \ref{lem:isolated_in_X}. 
\end{lemma}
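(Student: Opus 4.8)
The plan is to color $K$ in two stages: first secure a uniquely colored neighbor for every vertex of $S_K$ using a small set of ``representative'' vertices, and then fill in the rest of $K$ with the free color $f$. Throughout I use the operative meaning of Rule~(v), namely that $f$ is never the color on which any vertex relies for its uniquely colored neighbor; this is precisely what makes it safe to flood arbitrarily many vertices with $f$, since adding $f$-colored vertices can only fail to preserve a uniquely colored neighbor whose color is itself $f$, and there are none.

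\medskip
\noindent\textbf{Stage 1 (representatives).} I process the vertices of $S_K$ one at a time. When I reach a $v_i \in S_K$ that does not yet have a uniquely colored neighbor, I pick an uncolored neighbor $w_i$, which lies in $K\setminus\{v\}$ because $N(v_i)\subseteq K$ and $v_i\notin N(v)$, and set $C(w_i)=i$. The key observation is that, by Rule~(vi), each index $i$ with $v_i\in S_K$ is a color not yet used in $V\setminus X$, and these indices are pairwise distinct; hence every representative we create carries a color occurring exactly once in the whole coloring. Consequently each $v_i\in S_K$ is adjacent to a representative bearing a globally unique color (its own if it was not skipped, otherwise an earlier one), and so obtains a uniquely colored neighbor; symmetrically each representative $w_i$ may take $v_i$ as its uniquely colored neighbor. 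Moreover no representative gets color $f$ (since $v_f$ already has a uniquely colored neighbor, so $v_f\notin S_K$), and none gets the color on which $v$ relies (that color sits on a neighbor of $v$, hence on a vertex outside $S_K$); thus neither the free color nor the uniquely colored neighbor of $v$ is disturbed.

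\medskip
\noindent\textbf{Stage 2 (filling in, case $S_K\neq\emptyset$).} Let $U$ be the set of vertices of $K$ still uncolored after Stage~1. Since at least one representative now exists and $K$ is a clique, every vertex of $K\setminus\{v\}$ is adjacent to a representative whose color is globally unique. I therefore color all of $U$ with $f$ and let each such vertex take a representative as its uniquely colored neighbor. Checking the Rules of Lemma~\ref{lem:isolated_in_X} is then direct: (i)--(ii) are immediate; (iii)--(iv) hold because every colored vertex of $K$ and every vertex of $S_K$ has an identified uniquely colored neighbor and $v$ keeps its external one; and (v)--(vi) hold because the only new colors introduced are $f$ and the fresh representative colors, and the designated unique color of every vertex remains different from $f$.

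\medskip
\noindent The remaining work, and the step I expect to be the main obstacle, is the degenerate case $S_K=\emptyset$, where there are no representatives and flooding $U$ with $f$ need not leave each vertex of $K$ with a uniquely colored neighbor. Here I would revert to the standard single-colored-clique argument used in Case~2 of Lemma~\ref{lem:isolated_in_X}: pick one vertex $u^\ast\in U$, color it $d+1$ so that it is the unique $(d+1)$-colored vertex of $K$ and hence the uniquely colored neighbor of every other vertex of $K$, and color the rest of $U$ with $f$. The delicate point is to equip $u^\ast$ itself with a uniquely colored neighbor: this is routine when $C(v)\neq f$ (then $v$ serves), but in the exceptional situation where $v$ itself carries the free color one must instead choose $u^\ast$ adjacent to a non-$f$ vertex of $X$, or briefly recolor $v$. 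Confirming that this last maneuver keeps $f$ free and preserves the uniquely colored neighbor of $v$ is where the bookkeeping is heaviest.
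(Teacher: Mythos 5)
Your Stage 1 is essentially the paper's opening move (give each $v_i \in S_K$ a representative neighbor colored $i$, skipping those that already have a uniquely colored neighbor), but Stage 2 contains a genuine gap, and it sits exactly where the paper's proof does its heavy lifting. The claim that ``every vertex of $K\setminus\{v\}$ is adjacent to a representative bearing a globally unique color'' is false: the representative $w_i$ carries color $i$, which also sits on $v_i \in X$, and since $N(v_i) \subseteq K$, an uncolored vertex $u \in K$ can be adjacent to $v_i$ as well, in which case $u$ sees color $i$ twice. Concretely, take $X = \{v_1, v_2, v_3\}$ independent, $K = \{v, w, u\}$ a triangle with $N(v)\cap X = \{v_1, v_2\}$ and $N(w)\cap X = N(u)\cap X = \{v_1, v_3\}$, with the partial coloring $C(v_i) = i$, $C(v) = 1$, $\ucn(v) = v_1$, and free color $f = 2$; this satisfies all hypotheses of Lemma \ref{lem:handlesk} and is precisely the invocation ``$(v,1,2,K)$'' made in Case 1 of Section \ref{sec:zerone}, and here $S_K = \{v_3\}$. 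Whichever of $w, u$ you make the representative of $v_3$ (say $w$, colored $3$), flooding the other with $f=2$ leaves $u$ seeing the colors $1,3,1,3$ on its neighbors $v, w, v_1, v_3$: no color occurs uniquely, so the colored vertex $u$ has no uniquely colored neighbor and Rule (iii) of Lemma \ref{lem:isolated_in_X} is violated. By symmetry the other choice of representative fails identically, so no care in Stage 1 rescues the flooding step.

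This failure mode --- uncolored vertices of $K$ that see every representative color twice because they are adjacent to both $w_i$ and $v_i$ --- is exactly what the paper's case analysis is designed to handle. The paper splits on how many uncolored vertices remain, on whether some uncolored vertex has a uniquely colored neighbor \emph{other than} $v_f$, and on whether every index of $S_K$ was actually used as a color in $K$ ($m = m'$ versus $m > m'$); it then repairs the coloring by reassignment, recoloring a representative with $f$ (or with an unused index $j$) and giving an uncolored vertex the color $d+1$, so that either some $v_j \in S_K$ itself or the $d+1$-colored vertex becomes the required unique neighbor; it even tolerates a controlled exception in which $d+1$ serves as the unique color in the neighborhood of a vertex $v_j \in S_K$, which is harmless only because $N(v_j) \subseteq K$. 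Your proposal instead places the anticipated difficulty in the wrong case: $S_K = \emptyset$ is immaterial (the paper invokes Lemma \ref{lem:handlesk} only when $S_K \neq \emptyset$, and an empty $S_K$ reduces to the routine single-clique colorings used elsewhere), while the genuinely hard case $S_K \neq \emptyset$ is where your flooding argument breaks. The same oversight also affects your fallback for $S_K = \emptyset$: even when $C(v) \neq f$, the vertex $v$ need not serve as a uniquely colored neighbor of $u^\ast$, since the color $C(v)$ may reappear on a vertex of $X$ adjacent to $u^\ast$.
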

It will be convenient to denote an application of Lemma \ref{lem:handlesk}
by the 4-tuple, $(v, C(v), f, K)$. For example, we will say ``applying
Lemma \ref{lem:handlesk} to $(v, 1, 3, K)$'' to denote an application of 
Lemma \ref{lem:handlesk} where $v \in K$, $C(v) = 1$ and 3 is the free color.

We have four cases based on the neighborhoods of the vertices in $V\setminus X$.

\noindent
\textbf{Case 1: There exists a vertex $v\in V\setminus X$ such that $|N(v) \cap (X\setminus Y)| \geq 2$.}

Let $v \in K$, where $K$ is a clique in  $G[V \setminus X]$.
WLOG let $v_1, v_2\in X\setminus Y$ such that 
$v_1, v_2\in N(v)$. 
%Also, since $Y$ contains at least one pair of adjacent vertices, 
%WLOG let $v_3, v_4\in Y$ such that $\{v_3,v_4\}\in E(G)$. 

\begin{itemize}
    \item $|K|=1$. 
    
    We assign $C(v)=1$ and we get that $\ucn(v)=v_1$, 
    $\ucn(v_1)=\ucn(v_2)=v$. 
     We get the color 2 as the free color. 
     
     \item $|K|\geq 2$. 

\begin{itemize}

%\item $S_K\setminus \{v_1,v_2\}\neq \emptyset$. 
\item $S_K \neq \emptyset$.

We assign $C(v)=1$ and we get that $\ucn(v)=v_1$
    and for all $v_i \in N(v) \cap (X \setminus Y)$, we have $\ucn(v_i) = v$.
    %$\ucn(v_1)=\ucn(v_2)=v$. 
We now apply Lemma \ref{lem:handlesk} to  $(v,1, 2, K)$ ensuring that 
$K$ is colored, while taking the remaining vertices of $S_K$ into account.
Color 2 is the free color. 

\item $S_K=\emptyset$.

\begin{itemize}
    \item   There exists a vertex $v'\in K\setminus \{v\}$ 
    such that 
     $v_1\notin N(v')$. 

    Assign $C(v)=1$, $C(v')=d+1$ and the vertices (if any) in $K\setminus \{v,v'\}$ 
     the color $2$. 
    
    We get that $\ucn(v')=v$ and 
    for all $y\in K\setminus \{v'\}$, $\ucn(y)=v'$. 
    Also
    $\ucn(v_1)=\ucn(v_2)=v$. 
    Color 2 is the free color. 

    \item Every vertex in $K$ is adjacent to $v_1$. 
    
    Assign $C(v) = 2$ and the 
    vertices in $K \setminus \{v\}$ the color $d+1$. 
        
    We get that $\ucn(y)=v_1$ for all $y\in K$. 
    Also 
    $\ucn(v_1)=\ucn(v_2)=v$. 
    Color 1 is the free color. 
\end{itemize}

\end{itemize}
\end{itemize}
    In all the above cases, for each $v_i \in Y $, the uniquely 
    colored neighbor is the lone neighbor of 
    $v_i$ in $X$.

\noindent
\textbf{Case 2: There exists a vertex $v\in V\setminus X$ 
such that $|N(v) \cap (X \setminus Y)| = 1$ and %$|N(v) \cap Y| \geq 2$. Moreover, 
$N(v) \cap Y$ contains a pair of vertices that are adjacent to each other.} 
%WLOG, let $v_1\in N(v)\cap (X\setminus Y)$. 

Since Case 1 is already addressed, we assume for all $y \in V \setminus X$, 
we have $|N(y) \cap (X \setminus Y)| \leq 1$.

 Let $v \in K$, where $K$ is a clique in  $G[V \setminus X]$. 
 WLOG let $N(v)\cap (X\setminus Y) = \{v_1\}$, and let $v_2,v_3\in N(v)\cap Y$ 
         such that         $\{v_2,v_3\}\in E(G)$.

       \begin{itemize}

         \item $|K|=1$. 
         
    We assign $C(v)=1$. 
    We get that $\ucn(v)=v_1$ and 
    $\ucn(v_1)=\ucn(v_2)=\ucn(v_3)=v$. 
    We have the color 2 as the free color.

\item $|K| = 2$.

   Let $K=\{v,v'\}$. We have the following cases. 
        \begin{itemize}

                    \item $S_{K}\neq \emptyset$. 
                    
            We assign $C(v)=1$ and we get that $\ucn(v) = v_1$, $\ucn(v_1)=\ucn(v_2)=\ucn(v_3)=v$. 
            We apply Lemma \ref{lem:handlesk} 
            to $(v, 1, 2, K)$. 
            We have the color 2 as the free color. 
                
        \item $S_{K}=\emptyset$. 
                
      % \noindent
     \textbf{Subcase 1:} $v_1 \notin N(v')$. 
     %$ N(v') \cap (X \setminus Y) = \emptyset$. 
            
        We assign $C(v)=1$ and $C(v')=d+1$. 
        We get that $\ucn(v)=v_1$, $\ucn(v')=v$ and
        $\ucn(v_1)=\ucn(v_2)=\ucn(v_3)=v$.  We have color 2 as the free color.

%        \textbf{Subcase 2:} $ N(v') \cap (X \setminus Y) = \{v_i\}$ where $v_i \neq v_1$.
%        
%        We assign $C(v)=1$ and $C(v')=i$. We get that 
%        $\ucn(v)=v_1$, $\ucn(v')=v_i$, 
%        $\ucn(v_1)=\ucn(v_2)=\ucn(v_3)=v$ and $\ucn(v_i)=v'$.
%        We have color 2 as the free color.

        \textbf{Subcase 2:} $v_1 \in N(v')$. That is, $N(v') \cap (X \setminus Y) = \{v_1\}$.
        
        We first check if there exists a clique $\widehat K \subseteq G[V \setminus X]$ such that 
        $N(v_2) \cap \widehat K \neq \emptyset$ or
        $N(v_3) \cap \widehat K \neq \emptyset$.
        
        If there is no such clique $\widehat K$, 
        we reassign $C(v_3)=2$, 
        assign $C(v)=3$ and $C(v')=d+1$. 
        We get that 
        $\ucn(v)=v_1$ and 
        $\ucn(v_1)=\ucn(v_2)=\ucn(v_3)=\ucn(v')=v$.  
        Color 2 is the free color. 
        
        Else, there exists a clique $\widehat K$ such that $N(v_2) \cap \widehat K \neq \emptyset$ or
        $N(v_3) \cap \widehat K \neq \emptyset$.
         WLOG let  $N(v_3) \cap \widehat K \neq \emptyset$.
        We assign $C(v)=2$ and $C(v')=d+1$. 
        Now the vertex $v_3$ does not have a uniquely colored neighbor. 
        Let $w\in  N(v_3) \cap \widehat K$.

      If $S_{\widehat K}\neq \emptyset$, we assign $C(w) = 3$. We have $\ucn(v)=\ucn(v')=v_1$, $\ucn(v_1)=\ucn(v_2)=v$, $\ucn(v_3)=w$ and $\ucn(w) = v_3$. 
         Due to the case definition, $|N(w) \cap (X \setminus Y)| \leq 1$. 
        For the lone vertex $v_i \in N(w) \cap (X \setminus Y)$ (if it exists), 
        we have $\ucn(v_i) = w$.
      We now apply Lemma \ref{lem:handlesk} to $(w,3, 1, \widehat K)$ to color 
      the remaining vertices $\widehat K$ taking care of the vertices in $S_{\widehat K}$. 
      We have color 1 as the free color.

      Else if $S_{\widehat K} =  \emptyset$, we do the following\footnote{One may wonder
      about the possibility of vertices $v_i \in X \setminus Y$ such that
      $N(v_i) \subseteq K \cup \widehat K$, and be concerned that these vertices $v_i$ do 
      not feature in $S_K$ or $S_{\widehat K}$. We note that there are no such vertices $v_i$. 
      This is because, we have  $N(v)\cap (X\setminus Y) = N(v')\cap (X\setminus Y) = \{v_1\}$ in order to be in Subcase 2. }. 
      \begin{itemize}         
        \item There exists a vertex $w'\in \widehat K \setminus \{w\}$, 
        such that $v_3 \notin N(w')$.  

        We assign $C(w)=3$, $C(w')=d+1$ and the vertices (if any) in $\widehat K \setminus \{w,w'\}$ the 
        color 1. 
        
        We get that $\ucn(v)=\ucn(v')=v_1$, $\ucn(v_1)=\ucn(v_2)=v$, $\ucn(v_3)=w$, $\ucn(w')=w$ and for all 
        vertices $x\in \widehat K \setminus \{w'\}$, 
        $U(x)=w'$. 
        We have the color 1 as the free color.           
            
    \item For each $x\in \widehat K$, we have $v_3 \in N(x)$.
                
                We assign $C(w)=1$ and the rest 
                of the vertices (if any) in $\widehat K\setminus \{w\}$ 
                the color $d+1$.

        We get that $\ucn(v)=\ucn(v')=v_1$,  $\ucn(v_1)=\ucn(v_2)=v$,  $\ucn(v_3)=w$, 
        and for all vertices $x\in \widehat K$, $U(x)=v_3$. 
       Color 3 is the free color. 
     \end{itemize}  
          \end{itemize}

          \item $|K|\geq 3$. 
          
          \begin{itemize}
              \item $S_{K}\neq \emptyset$. 
              
          We assign $C(v)=1$ and 
          we get that $\ucn(v) = v_1$, $\ucn(v_1)=\ucn(v_2)=\ucn(v_3)=v$. 
          We apply Lemma \ref{lem:handlesk} to $(v,1, 2, K)$.  
          We have the color 2
          as the free color.

          \item $S_{K}= \emptyset$.

          \begin{itemize}
              \item 
              There exists a vertex $v'\in K\setminus \{v\}$ such that $v_1\notin N(v')$. 
              
              We assign $C(v)=1$, $C(v')=d+1$ and 
                the vertices in 
                $K\setminus \{v,v'\}$ the color 3.

                We get that $\ucn(v')=v$ and for all $y\in K\setminus \{v'\}$, $\ucn(y)=v'$.   Also $\ucn(v_1)=\ucn(v_2)=\ucn(v_3)= v$.  
         We have the color 3 as the free color. 
              
              \item Every vertex in $K$ is adjacent to $v_1$. 
              
         Choose two vertices $v',v''\in K \setminus \{v\}$ 
            and assign $C(v) = 1$, $C(v')=2$, $C(v'')=d+1$ 
            and the vertices (if any) in $K\setminus \{v,v',v''\}$ the 
            color $3$.

            We get that $\ucn(v'')=v'$ and for all $y\in K\setminus \{v''\}$, $\ucn(y)=v''$.   
                Also $\ucn(v_1)=\ucn(v_2)=\ucn(v_3)= v$.  
         We have the color 3 as the free color. 
         
                \begin{comment}

              There exists a vertex $v'\in K\setminus \{v\}$ such that 
              $v_1\notin N(v')$.

               We assign $C(v)=1$, $C(v')=d+1$ and 
%              If $v_2\in N(v')$, then every other 
                the vertices in 
                $K\setminus \{v,v'\}$ the color 2.

                We get that $\ucn(v')=v$ and for all $x\in K\setminus \{v'\}$, $\ucn(x)=v'$.   
                Also $\ucn(v_1)=\ucn(v_2)=\ucn(v_3)= v$.  
        For each $v_i \in Y \setminus \{v_2, v_3\}$, the uniquely 
        colored neighbor is the lone neighbor of 
        $v_i$ in X. 
         We have the color 3 as the free color.

            \item   All vertices in $K$ are adjacent to $v_1$. 
            
            Choose 2 vertices $v',v''\in K$ 
            and assign $C(v')=2$, $C(v'')=d+1$ 
            and the vertices in $K\setminus \{v,v'\}$ the 
            color $3$.

            We get that $\ucn(v'')=v'$ and for all $x\in K\setminus \{v''\}$, $\ucn(x)=v''$.   
                Also $\ucn(v_1)=\ucn(v_2)=\ucn(v_3)= v$.  
        For each $v_i \in Y \setminus \{v_2, v_3\}$, the uniquely 
        colored neighbor is the lone neighbor of 
        $v_i$ in X. 
         We have the color 3 as the free color.

            We get that $U(v')=v''$ and $U(w)=v'$ for all $w\in K\setminus \{v'\}$. Also $U(x)=v$, for all $x\in N(v)\cap X$.

                \end{comment}
          \end{itemize}
          
    \end{itemize}

          \end{itemize}
         In each of the above cases, for each $v_i \in Y \setminus \{v_2, v_3\}$, the uniquely 
        colored neighbor is the lone neighbor of 
        $v_i$ in $X$.

\noindent                    
\textbf{Case 3: There exists a vertex $v\in V\setminus X$ such that 
$|N(v)\cap (X\setminus Y)| = 1$ and $|N(v)\cap Y| \geq 1$. Moreover, 
none of the vertices in  $N(v)\cap Y$ are adjacent to each other.}
%there exists a 
%vertex in $X\setminus Y$ adjacent to $v$. 
%such that $w\in N(v)$. 

Let $v\in K$ for a clique $K \subseteq G[V \setminus X]$. 
WLOG let $v_1\in N(v)\cap (X\setminus Y)$ and $v_2\in N(v)\cap Y$. 
Let $v_3$ be the lone neighbor of $v_2$ in $Y$.
It follows that $v_3\notin N(v)$.

\begin{itemize}
    \item $|K| = 1$.

        We assign $C(v)=1$ and we get that $\ucn(v)=v_1$, 
        $\ucn(v_1)=\ucn(v_2)=v$ and  $\ucn(v_3)=v_2$. 
        Color 3 is the free color.
        
    \item $|K| \geq 2$.
    
    %, we do the following. 

        \begin{itemize}

            \item $S_K\neq \emptyset$.

            We assign $C(v)=1$  and we get that $\ucn(v)=v_1$, 
        $\ucn(v_1)=\ucn(v_2)=v$ and  $\ucn(v_3)=v_2$.
            We apply Lemma \ref{lem:handlesk} to
            $(v,1, 3, K)$. Color 3 is the free color.

            \item $S_K = \emptyset$.
            
            \begin{itemize}

    \item There exists a vertex $v'\in K\setminus \{v\}$ such that $v_2\in N(v')$.

    We assign 
    $C(v)=1$, $C(v')=d+1$ and the vertices (if any) in $K\setminus \{v,v'\}$ the color 
    3. 
    
    We get that $\ucn(v')=v_2$ and 
    for all $y\in K\setminus \{v'\}$, $\ucn(y)=v'$. 
    Also
    $\ucn(v_1)=\ucn(v_2)=v$ and  $\ucn(v_3)=v_2$. 
    We have  color 3 as the free color. 
    
    \item  None of the vertices in $K\setminus \{v\}$ are adjacent to $v_2$.
    
    We assign 
    $C(v)=2$ and the vertices in $K\setminus \{v\}$ the color $d+1$. 
    We get that $\ucn(v)=v_1$ and 
    for all $y\in K\setminus \{v\}$, $\ucn(y)=v$. 
    Also
    $\ucn(v_1)=\ucn(v_2)=v$ and  $\ucn(v_3)=v_2$. 
    We have color 3 as the free color. 
    \end{itemize}
\end{itemize}    
 
    \end{itemize}
In each of the above cases, for each $v_i \in Y \setminus \{v_2, v_3\}$, the uniquely 
    colored neighbor is the lone neighbor of 
    $v_i$ in $X$. 

\noindent
\textbf{Case 4: For each  $y \in V \setminus X$ such that 
$|N(y) \cap (X\setminus Y)| = 1$, we have  
$|N(y) \cap Y| = 0$.} 

Since Case 1 is addressed, we assume that each vertex in $V\setminus X$ has at most 1 
neighbor in $X\setminus Y$.

Since $G$ is connected and since\footnote{This is where we make use of the assumption that $X \setminus Y$ is nonempty.} 
$|X \setminus Y| \geq 1$, we can choose a clique $K\subseteq G[V\setminus X]$ 
with distinct vertices\footnote{Because of the definition of Case 4, it follows that $v \neq v'$.} 
$v, v' \in K$ such that
$N(v) \cap (X\setminus Y) \neq \emptyset$ and $N(v') \cap Y \neq \emptyset$.  
WLOG let $v_1\in N(v) \cap (X\setminus Y)$ 
 and $v_2\in N(v')\cap Y$. Let $v_3$ be the lone neighbor of
 $v_2$ in $Y$. It follows that $v_2, v_3 \notin N(v)$.

\begin{itemize}

        \item $S_K =\emptyset$. 

%We have that $|K|\geq 2$. 
We assign $C(v)=3$, $C(v')=d+1$ and 
the vertices (if any) in $K\setminus \{v,v'\}$ the color 1. 
We get that $\ucn(v')=v_2$ and 
for all $y\in K\setminus \{v'\}$, $\ucn(y)=v'$. 
Also $\ucn(v_1)=v$, $\ucn(v_2)=v_3$ and $\ucn(v_3)=v_2$. 
Color 1 is the free color. 

    \item $S_K \neq \emptyset$. 
    
    We 
assign $C(v)=3$ and we have %$\ucn(v) = v_1$, 
$\ucn(v_1) = v$, $\ucn(v_2) = v_3$ and $\ucn(v_3) = v_2$.

Recall that $S_K = \{v_i \in X \setminus Y : N(v_i) \subseteq K\} \setminus N(v)$.
%Let $S_K \setminus \{v_1\} =\{v_{j_1}, v_{j_2}, \cdots, v_{j_m}\}$, for some $m \geq 1$.
    For each $v_i\in S_K$, choose a vertex 
    $w_i\in N(v_i)$, assign $C(w_i)=i$ and let $\ucn(v_i)=w_i$. 
    Since each vertex in $V\setminus X$ has at most 1 
neighbor in $X\setminus Y$, it also follows that $N(v_i) \cap N(v_{i'}) = \emptyset$
    for any two vertices $v_i, v_{i'} \in S_K$.
    
    Because of the condition of Case 4, $N(v') \cap (X\setminus Y) = \emptyset$. Assign $C(v') = d+1$,
    and assign the color 1 to all the remaining uncolored vertices (if any) in $K$.
    We have $\ucn(v') = v_2$
    and for all $y\in K\setminus \{v'\}$, $\ucn(y)=v'$.
    Color 1 is the free color\footnote{One may wonder why we did not apply
    Lemma \ref{lem:handlesk} to $(v, 3, 1, K)$ in this situation.
    This is because Lemma \ref{lem:handlesk} requires $v$ to rely on 
    a color other than the free color as the unique color in its neighborhood. There is no assignment that meets this requirement.}.

\end{itemize}
In each of the above cases, for each $v_i \in Y \setminus \{v_2, v_3\}$, the uniquely 
    colored neighbor is the lone neighbor of 
    $v_i$ in $X$.

 We have concluded the four cases. In each of the cases we have a free color $f$. We use Lemma \ref{lem:isolated_in_X} to get a uniquely colored neighbor for 
 remaining vertices in $X\setminus Y$, the cliques in $G[V \setminus X]$ 
 and thereby obtain a CFON coloring.

%\end{proof}

\newcounter{thmsave}
\setcounter{thmsave}{\value{theorem}}
\setcounter{theorem}{\value{routine}}
\begin{lemma}[Restated]
Let $G = (V,E)$ be a graph and $X =\{v_1, v_2, \ldots, v_d\} \subseteq V$ be a set of vertices such that $G[V\setminus X]$ is a disjoint union of cliques. 
Let $Y = \{v_i \in X : \deg_X (v_i)\geq 1\}$. 
Let $v \in K$ where $K$ is a 
clique in $G[V \setminus X]$ such that $|K| \geq 2$. 
Let $C(v_i) = i$ for all $v_i \in X$ and all the vertices in $K \setminus \{v\}$ are uncolored. Suppose $C(v)$ is assigned and the free color $f$ is identified, in such a way that $v$ relies on a color
other than $f$ as the unique color in its neighborhood.
Then $K$ can be colored  
 in such a way that all the vertices in $S_K$ have a 
uniquely colored neighbor, and satisfying all the rules of Lemma \ref{lem:isolated_in_X}. 
\end{lemma}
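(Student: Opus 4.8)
The plan is to color $K$ in two passes. Write $c^\ast\ (\neq f)$ for the color $v$ relies on, witnessed by a neighbor $v_{c^\ast}$ of $v$; since $K\setminus\{v\}$ is uncolored, $v_{c^\ast}\in X$, and in every application of the lemma $v$ itself carries the color $c^\ast$, which I assume. The first pass secures a uniquely colored neighbor for every vertex of $S_K$: processing the $v_i\in S_K$ one by one, if $v_i$ still lacks a uniquely colored neighbor I pick an uncolored $w_i\in N(v_i)\cap K$, set $C(w_i)=i$ and put $\ucn(v_i)=w_i$; if all of $N(v_i)$ is already colored by earlier representatives I take as $\ucn(v_i)$ an already colored neighbor whose color is unique in $N(v_i)$, which exists because the representatives carry the pairwise distinct indices of $S_K$. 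This pass never recolors $v$ (as $v_i\notin N(v)$ forces $v\notin N(v_i)$) and never uses the color $c^\ast$ (as $v_{c^\ast}\in N(v)$ forces $v_{c^\ast}\notin S_K$), so $v$'s reliance on $c^\ast$ is untouched.

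For the second pass let $U$ be the vertices of $K$ still uncolored. If $U=\emptyset$ the whole clique is distinctly colored and uniquely colored neighbors are immediate. Otherwise I mimic the clique cases of Lemma \ref{lem:isolated_in_X}: choose a hub $v'\in U$, set $C(v')=d+1$ (the unique $d{+}1$ in $K$, so $\ucn(w)=v'$ for every other $w\in K$), and give the remaining vertices of $U$ the free color $f$. The hub then needs its own uniquely colored neighbor. If $v'$ can be taken with $v_{c^\ast}\notin N(v')$, then $\ucn(v')=v$, because $c^\ast$ appears in $N(v')$ only at $v$. In the saturated case, where every candidate hub is adjacent to $v_{c^\ast}$, I would instead hunt for another color occurring once in $N(v')$: an $X$-neighbor $v_k$ of $v'$ with $k\notin\{c^\ast,f\}$ and $v_k\notin S_K$ (so that no representative doubles color $k$), or a representative $w_i$ whose partner $v_i$ is non-adjacent to $v'$; if neither is available I introduce a second vertex of $U$ colored with a fresh $k\in[d]\setminus\{c^\ast,f\}$ to serve as $\ucn(v')$, which is exactly where $d\geq3$ and a bound on how many colors the representatives have consumed are needed.

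Finally I would verify Rules (i)--(vi). Rules (i), (ii), (iv) are routine, and Rule (vi) holds because the only index colors I introduce belong to $S_K$ vertices, each of which is simultaneously given a uniquely colored neighbor; Rule (iii) follows from the explicit $\ucn$ assignments to $v$, the representatives, the hub and the fillers. I expect the real obstacle to be Rule (v): coloring the leftover vertices with $f$ must neither destroy $v$'s uniquely colored neighbor --- which is precisely what the hypothesis that $v$ relies on a color other than $f$ rules out, since $v$ is adjacent to all the new $f$'s --- nor make $f$ the unique color of some vertex of $X$ adjacent to $K$. Reconciling this control of the free color with the requirement that the hub always retain a surviving unique color, in the regime where the representatives have already used up many of the available colors, is the delicate heart of the argument and is what forces the case distinctions above.
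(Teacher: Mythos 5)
Your first pass (representatives $w_i$ for the vertices of $S_K$) and the easy branches of your second pass match the paper's proof, but the case you yourself defer to as ``the delicate heart of the argument'' is precisely where the lemma needs a new idea, and your fallback chain provably fails there. Concretely: let $d=3$, $X=\{v_1,v_2,v_3\}$ independent, $K=\{v,w,u\}$ a clique with $v\sim v_1,v_2$, $w\sim v_1,v_3$, $u\sim v_1,v_3$, and invoke the lemma with $C(v)=1$, free color $f=2$, $v$ relying on $v_1$ (this satisfies every hypothesis of the statement, and is exactly the shape of the paper's application $(v,1,2,K)$). Then $S_K=\{v_3\}$, your first pass colors one of $w,u$, say $w$, with color $3$, and $U=\{u\}$. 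For the hub $u$ every option you list fails: $v_{c^\ast}=v_1\in N(u)$, so color $1$ is doubled; the only other $X$-neighbor of $u$ is $v_3\in S_K$, whose color $3$ is doubled by the representative $w$; the representative's partner $v_3$ is adjacent to $u$; and there is neither a fresh color in $[d]\setminus\{c^\ast,f\}=\{3\}$ nor a second vertex of $U$ to carry one. Every color in $N(u)$ appears twice and your proposal has no move left.

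The missing idea is the paper's recoloring step, which your proposal forbids itself by treating the first-pass assignments $\ucn(v_i)=w_i$ as immutable. When the hub $v'$ is stuck, the paper picks a representative $v''$ with $C(v'')=k$ (so $v_k\in N(v'')\cap S_K$), \emph{reassigns} $C(v'')=f$ (or $d+1$ in the symmetric subcase), gives $v'$ the color $d+1$, and sets $\ucn(v')=v_k$: color $k$ now occurs in $N(v')$ only at $v_k$. The orphaned vertex $v_k$ then takes the hub itself as its uniquely colored neighbor via the color $d+1$. This is an exception to the pattern that vertices of $X$ rely on colors of $[d]$, but it is sound precisely because $N(v_k)\subseteq K$: no later coloring in the completion phase can ever duplicate $d+1$ inside $N(v_k)$. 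This recoloring-plus-exception move (used in both the one-uncolored-vertex and the several-uncolored-vertices cases) is what eliminates any need for a ``fresh'' color, which, as the example shows, need not exist. Separately, your blanket assumption that $C(v)$ equals the color $c^\ast$ that $v$ relies on is not among the lemma's hypotheses; it happens to hold in all of the paper's invocations, but the paper's proof deliberately never uses $C(v)$ or $\ucn(v)$, and your case~(a) ($\ucn(v')=v$) collapses without it.
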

\setcounter{theorem}{\value{thmsave}}

\begin{proof}
Recall that $S_K = \{v_i \in X \setminus Y : N(v_i) \subseteq K\}  \setminus N(v)$.
Let $S_K=\{v_{j_1}, v_{j_2}, \cdots, v_{j_m}\}$, for some $m \geq 1$.
    For each $v_i\in S_K$, choose an uncolored vertex 
    $w_i\in N(v_i)$, assign $C(w_i)=i$ and let $\ucn(v_i)=w_i$. 
    If all the vertices in $N(v_i)$ 
    are colored, 
    we arbitrarily choose a vertex in $N(v_i)$
    as $\ucn(v_i)$.
    WLOG let the colors used 
    in $K$ because of the above process
    be $\{j_1,j_2,\cdots, j_{m'}\}$ where $m'\leq m$. 
    Note that the vertex $v$ is colored prior to the application of this lemma,
    and has a uniquely colored neighbor as well. Hence we do not talk about
    $C(v)$ and $\ucn(v)$ in this proof.
    
    We have the following cases based on the number of uncolored vertices in $K$. 
        
\begin{itemize}
    \item All the vertices in $K$ are colored. 
    
    Each vertex in $K \setminus \{v\}$ was colored because it was chosen as $w_i$ by some $v_i \in S_K$. Hence $\ucn(w_i) = v_i$.

    \item $K$ contains exactly one   uncolored vertex.

    Let the uncolored vertex in $K$ be $v'$. 
    If $v'$ has a uniquely colored neighbor, 
    we assign $C(v')=d+1$. 
    Now, every vertex in $K$ has a uniquely colored neighbor. 
    
    If $v'$ does not have a 
    uniquely colored neighbor, 
    we have two cases depending on the number of colors used in $K$. 

    \begin{itemize}
        \item  $m=m'$. 
        
        This means that each vertex $v_i\in S_K$ chose a neighbor $w_i\in K$ and assigned the color $i$ to it. 
        So 
        $v'$ sees each of the 
        the colors $j_1, j_2, \cdots, j_{m}$ twice in its neighborhood.  
        This means that $v_{j_1}, v_{j_2}, \cdots, v_{j_{m}}\in N(v')$. 
        Recall that $f$ is the free color, where $1 \leq f \leq d$,
        and hence $v_f \in X$ sees a color other than $f$ as the unique color in its 
        neighborhood.
        We do the following to obtain a uniquely colored neighbor for $v'$: 
        
        \begin{itemize}
            \item There exists a vertex $v''\in K\setminus \{v, v'\}$ such that 
            $v_f\in N(v'')\cap X$. 
            
            Let $C(v'')=k$ due to a vertex 
            $v_k\in N(v'')\cap S_K$. 
            Assign $C(v')=k$ and 
            reassign $C(v'')=d+1$. 
            We have that $\ucn(v')=v_k$, $\ucn(v'')=v_f$ 
           and we reassign $\ucn(v_k) = v'$.
            
            %\textcolor{blue}{for all $v_i\in S_K$, $\ucn(v_i)=v'$. }

            \item None of the vertices in $K\setminus \{v, v'\}$ is adjacent to $v_f$. 

            Note that $v_f\notin N(v')$, else $v_f$ would have served as a uniquely colored neighbor for $v'$.

            Choose a vertex $v''\in K\setminus \{v, v'\}$. 
            Suppose $C(v'')=k$ and this implies that 
            $v_k\in N(v'')\cap S_K$. 
            We reassign $C(v'')=f$ and assign $C(v')=d+1$. 
            We have $\ucn(v') = v_k$ and we reassign $\ucn(v_k)=v'$. 
            The assignment of $d+1$ as the unique color in the neighborhood of
            $v_k$ is an 
            exception. However, this is fine as $N(v_k)$ is contained in $K$, 
            and does not interact with any other cliques in $G[V \setminus X]$.
        \end{itemize}
        \item $m>m'$. 
        
        This implies that 
        there exists a vertex $v_j\in S_K$ such that 
        the color $j$ is not given to any vertex in $K$. 
        So $v_j$ must be seeing a vertex $v''\in K \setminus \{v'\}$ 
        as its uniquely colored neighbor. 
        %It is possible that $v'' = v$.
        
        We claim that $v_j\notin N(v')$. 
        If $v_j\in N(v')$, then $v_j$ is the lone vertex in $N(v')$
        that is colored $j$, and hence is a uniquely colored neighbor
        for $v'$. As per the scope of this case, $v'$ does not have a
        uniquely colored neighbor. This is a contradiction.
        
        We reassign $C(v'')=j$ and assign $C(v')=d+1$. 
        We get that $U(v')=v''$.  
            
         %Note that if $v''=v$, the vertices in $N(v) \cap X$ that relied 
         %on $v$ for their uniquely colored neighbor can still continue to 
         %rely on $v$. This is because (i) the only other vertex in $G$ that is 
         %colored $j$ is $v_j$, and (ii) $N(v_j) \cap X = \emptyset$.
    \end{itemize}
    
%      \item $K$ contains exactly two uncolored vertices. 
    
%    Let $v',v''\in K$ be the uncolored vertices. 
%    We assign $C(v')=C(v'')=d+1$. 
%    We get that $\ucn(v')=v''$ and $\ucn(v'')=v'$. 

    \item $K$ contains at least two uncolored vertices. 
    
    We first check if there exists an uncolored vertex $v'$ in $K$ such that $v'$ has a uniquely colored neighbor other than $v_f$. 
    If such a $v'$ exists, then we assign $C(v')=d+1$ and the remaining uncolored vertices in $K$ the free color $f$. 
    For all $w \in K$ such that $C(w) = f$, we have $\ucn(w) = v'$.
    
%    \skcomment{All the previously colored vertices except $v$ have their
%    uniquely colored neighbor from $S_K$ and hence not from $v_f$. The vertex
%    $v$ also has a uniquely colored neighbor other than $v_f$. It does not matter if any other vertex in $K$ relied on $v_f$
%    for its uniquely colored neighbor. Because all the vertices in $K$ 
%    can now see $v'$ as their ucn.}

        If such a vertex $v'$ does not exist, we have the following 
        cases based on the relation between $m$ and $m'$. 
        \begin{itemize}
            \item $m=m'$.
            
        Choose a colored vertex $w\in K\setminus \{v\}$
        and  an uncolored vertex $v' \in K$.
        Suppose $C(w)=j$, which means that 
        $v_j\in N(w)$. Since $v'$ does not see a uniquely colored neighbor other than $v_f$, it is the case that 
        $v_j\in N(v')$. 
        
        We reassign $C(w)=f$, assign $C(v')=d+1$ and 
        the remaining uncolored vertices (if any) in $K$ 
        the free color $f$. 
        We get that $\ucn(v')=v_j$ and 
        $\ucn(v_j)=v'$. All the vertices in $K\setminus \{v'\}$ will have $v'$ as their uniquely colored neighbor. 
        
        The assignment of $d+1$ as the unique color in the neighborhood of
        $v_j$ is an 
        exception. However, this is fine as $N(v_j)$ is contained in $K$, 
        and does not interact with any other cliques in $G[V \setminus X]$.
        
        \item $m>m'$. 

This implies that there exists a vertex 
$v_j\in S_K$ such that the color $j$ was not used in $K$. This also 
implies that none of the uncolored vertices in $K$ have $v_j$ in their
neighborhood. This is because if $v_j$ had an uncolored neighbor in $K$,
then that neighbor would have been colored $j$ in the coloring process
performed at the beginning of this proof. 

We choose two uncolored vertices $v',v''\in K$ and assign $C(v')=d+1$, $C(v'')=j$ and the remaining uncolored vertices (if any) the color $f$. 

We get that $\ucn(v')=v''$ and for all other vertices $w\in K\setminus \{v'\}$ will have $U(w)=v'$. 
    \end{itemize} 
\end{itemize}
\end{proof}

\bibliographystyle{plain}

\bibliography{BibFile}
\appendix

\end{document}